\documentclass[a4paper,12pt,reqno]{amsart}
\usepackage{amsmath,amsfonts,amssymb,amsthm,enumerate,multicol}
\usepackage{mathrsfs}
\usepackage{tikz,graphicx}
\usepackage{hyperref}
\usepackage{caption,enumitem,wasysym}
\usepackage{cleveref}
\usepackage{epstopdf}
\usepackage{float,wrapfig}
\usepackage[labelsep=space]{caption}
\usepackage[font=footnotesize]{caption}
\usepackage{xcolor}
\DeclareFontShape{OT1}{cmr}{bx}{sc}{<->cmcsc10}{}
\makeatletter
\DeclareFontFamily{U}{wasy}{}
\DeclareFontShape{U}{wasy}{m}{n}{<->wasy10}{}
\DeclareFontShape{U}{wasy}{b}{n}{<->wasy10}{}
\makeatother
\usepackage{cases}
\usepackage[english]{babel}
\usepackage[autostyle]{csquotes}
\usepackage{cite}
\theoremstyle{plain}
\newtheorem{thm}{Theorem}[section]

\newtheorem{lem}[thm]{Lemma}
\newtheorem{prop}[thm]{Proposition}

\newtheorem{defn}[thm]{Definition}
\newtheorem{rem}[thm]{Remark}

\newtheorem{assum}[thm]{Assumptions}

\usepackage{mathtools}
\usepackage{amsmath,amsfonts,amssymb,amsthm,enumerate,multicol}
\usepackage{tikz}
\usepackage{float}
\usepackage{autobreak}

\allowdisplaybreaks

\numberwithin{equation}{section}

\begin{document} 
	\begin{center}
		\Large{\textbf{Existence for the Discrete Nonlinear Fragmentation Equation with Degenerate Diffusion}}
	\end{center} 
	\medskip
	\medskip
	\centerline{${\text{ ${\text{Saumyajit Das$^{\dagger*}$}}$ and  Ram Gopal Jaiswal$^{\dagger*\ddagger*}$}}$}\let\thefootnote\relax\footnotetext{$^{*}$Corresponding author.  \newline{\it{${}$ \hspace{.3cm} Email address: }}saumyajit.math.das@gmail.com, rgopaljaiswal@iitr.ac.in}
	\medskip
	{\footnotesize

		\centerline{ ${}^{}$  $\dagger$ Department of Mathematics, Harish-Chandra Research Institute,}
		\centerline{ A CI of Homi
			Bhabha National Institute, Chhatnag Road,}
		\centerline{Jhunsi, Prayagraj 211019, India.}
		\centerline{ ${}^{}$  $\ddagger$ Department of Mathematics, Indian Institute of Technology Bombay,}
		\centerline{Powai, Mumbai, Maharashtra 400076, India}
		
	}
	\bigskip
	\begin{quote}
		{\small {\em \bf Abstract.} 
			A mathematical model for the discrete nonlinear fragmentation (collision-induced breakage) equation with diffusion is studied. The existence of global weak solutions is established in arbitrary spatial dimensions without assuming a strictly positive lower bound on the diffusion coefficients, extending previous results that were restricted to one-dimensional domains and relied on uniformly positive diffusion. The analysis is carried out under boundedness assumptions on the collision and breakage kernels. The proof is based on the construction of a suitable regularized system, combined with weak $L^2$ a priori estimates and compactness arguments in $L^1$, which allow the passage to the limit in the nonlinear fragmentation operator.}
	\end{quote}
	\vspace{0.5cm}
	\textbf{Keywords.} Nonlinear fragmentation equation; Collision-induced breakage equation; degenerate diffusion coefficients; weak supersolutions; mass conservation.\\
	\textbf{AMS subject classifications.} 35A01; 35K08; 35K51; 35K55; 	35K57; 35K61; 35K65; 45K05.
	\section{\textbf{Introduction}}\label{sec:intro}
	
	A system of particles is considered in which collisions between two particles result in fragmentation into daughter particles with a possible transfer of mass. These particles can also diffuse in space with a diffusion coefficient that depends on their size. This mechanism is described by the nonlinear fragmentation equation (collision-induced breakage equation) with diffusion. Let $\Omega \subset \mathbb{R}^N$ be a smooth bounded domain. For $i \ge 1$, let $f_i(t,y)$ denote the density of particles of size $i$ at position $y \in \Omega$ and time $t \ge 0$. For $T>0$, set $\Omega_T = (0,T)\times\Omega$. Then, the nonlinear fragmentation equation with diffusion reads
	\begin{equation}
		\left\{
		\begin{aligned}
			& \partial_t f_i - d_i\, \Delta f_i = Q_i(f), && \text{in } (0,T)\times \Omega, \\
			& \nabla{f_i}\cdot \nu = 0, && \text{on } (0,T)\times \partial\Omega, \\
			& f_i(0,x) = f_i^{\mathrm{in}}(x), && \mbox{in}\  \Omega,
		\end{aligned}
		\right. \label{NFE}
	\end{equation}
	where
	\begin{align} 
		Q_i(f)
		= 
		\frac12 \sum_{j=i+1}^{\infty}\sum_{k=1}^{j-1}
		b_{j-k,k}^i\, a_{j-k,k}\, f_{j-k} f_k
		\;-\;
		\sum_{j=1}^{\infty} a_{i,j}\, f_i f_j. \label{NFE Operator}
	\end{align}
	
	In equations \eqref{NFE}–\eqref{NFE Operator}, $d_i>0$ denotes the (constant) diffusion coefficient of particles of size $i$, and $\nu$ represents the unit outward normal vector at a point $x\in\partial\Omega$. The collision kernel denoted by $a_{i,j}$ accounts for the rate of collision between particles of size $i$ and $j$. For $i\in\mathbb{N}$, the initial densities $f_i^{\rm{in}}$ are taken to be nonnegative and smooth. We assume that the total initial mass is finite,
	\begin{align}
		\sum_{i=1}^{\infty} i \, \| f_i^{\rm in} \|_{L^1(\Omega)} < +\infty, \label{IC mass finite}
	\end{align}
	and that the initial densities are uniformly bounded, i.e.,
	\begin{align}
		\sup_{i \in \mathbb{N}} \| f_i^{\rm in} \|_{L^\infty(\Omega)} < +\infty. \label{IC uniform boundedness}
	\end{align}
	The breakage kernel $b^k_{i,j}$ gives an average number of particles of size $k$ due to collision between particles of size $i$ and $j$. 
	
	The collision kernel is assumed to be symmetric, that is,
	$a_{i,j} = a_{j,i}$ for all $i,j \ge 1$, and nonnegative.
	The breakage kernel is also symmetric with respect to the indices $i$ and $j$,
	namely $b_{i,j}^k = b_{j,i}^k \ge 0$ for all $i,j,k \ge 1$.
	Moreover, the breakage kernel satisfies the mass conservation property in each collision given by
	\begin{align}
		\sum_{k=1}^{i+j-1} k\, b_{i,j}^k = i+j, \qquad \text{for all } i,j \ge 1. \label{local mass conservation}
	\end{align}
	In addition, no particles larger than the total colliding mass are produced, that is,
	\begin{align}
		b_{i,j}^k = 0, \qquad \text{for } k \ge i+j. \label{zero value of b}
	\end{align}

	However, particles of size larger than one of the colliding particles may be
	formed as a result of mass transfer.
	For example, when particles of sizes $i$ and $j$ collide, it is possible to form
	particles of sizes $i + j/3$ and $2j/3$ by transferring a mass
	$j/3$ from the particle of size $j$ to the particle of size $i$ after
	collision.
	
	However, it is worth mentioning an important subcase in which mass transfer
	between colliding particles is excluded.
	In this setting, both particles break into smaller fragments without any transfer
	of mass between them.
	This collision-induced breakage mechanism was first studied by Cheng and Redner \cite{cheng1988, cheng1990} and is commonly referred to as the nonlinear fragmentation equation without mass transfer. In this case, the breakage kernel admits the representation
	\[
	b_{i,j}^k
	=
	\beta_{i,j}^k \, \mathbf{1}_{[k,\infty)}(i)
	+
	\beta_{j,i}^k \, \mathbf{1}_{[k,\infty)}(j),
	\]
	where $\beta_{i,j}^k$, for $1 \le k \le i-1$, denotes the average number of
	particles of size $k$ produced from a particle of size $i$ after a collision with
	a particle of size $j$.
	The coefficients $\beta_{i,j}^k$ satisfy the mass conservation condition
	\[
	\sum_{k=1}^{i-1} k\, \beta_{i,j}^k = i, \qquad \text{for all } i,j \ge 1,
	\]
	together with
	\[
	\beta_{i,j}^k = 0, \qquad \text{for } k \ge i.
	\]
	
	The homogeneous nonlinear fragmentation equation, that is, the nonlinear
	fragmentation equation without diffusion, has attracted considerable attention.
	Results concerning existence, uniqueness, and large-time behavior including the
	existence of self-similar solutions, have been studied by several authors; see, for
	example, \cite{ali2024well, ali2024note, ali2024, ali2025, ernst2007, zheng2005,
		laurencot2001, kostoglou2000, GL22021, JA2025, GJL2024, barik2020, barik2020global,
		barik2021, barik2021existence, GL12021}.
	The existence of solutions to inhomogeneous nonlinear fragmentation equations with
	scattering, that is, models in which the maximum cluster size is finite, has also
	been investigated by several authors and can be found in
	\cite{amann2005local, walker2004discrete, walker2005new}.
	The first results concerning the existence of solutions to inhomogeneous nonlinear
	fragmentation equations with arbitrarily large particle sizes in one dimension were
	obtained by Canizo, Desvillettes, and Fellner \cite{canizo2010regularity}. Their
	existence result relies on an $L^2$ estimate of the mass, which requires the diffusion
	coefficients to be uniformly bounded from below and above by positive constants.
	This framework, however, does not cover the physically relevant case in which the
	diffusion coefficients are degenerate, that is,
	
	\[
	\inf_{i \ge 1} d_i = 0.
	\]
	
	It is worth emphasizing that, in contrast to the nonlinear fragmentation equation,
	the linear fragmentation equation with coagulation with diffusion has been extensively studied in the
	literature \cite{wrzosek2002, laurenccot2002, laurencot2002continuous}. Although the coagulation operator is itself nonlinear, when it is
	coupled with linear fragmentation, the specific structure of the resulting
	coagulation--fragmentation system allows to derive a priori
	$L^\infty$ bounds by an induction argument on the cluster size. Such uniform bounds
	play a crucial role in establishing the global-in-time existence of solutions. However, this induction-based approach does not seem to be applicable to the case
	of nonlinear fragmentation, due to the fundamentally different structure of the
	nonlinear fragmentation operator. This difficulty was already highlighted in \cite{canizo2010regularity}, whose approach relies on an $L^2$ estimate on mass and provides an alternative method for proving existence results for
	nonlinear fragmentation models, as well as for coagulation--fragmentation systems
	with non-degenerate diffusion coefficients. We also note that degenerate diffusion coefficients have been considered for the
	coagulation--fragmentation equation with diffusion in \cite{canizo2010absence}. To the best of our knowledge, however, the case of degenerate diffusion
	has not been addressed for the nonlinear fragmentation equation. In particular,
	the lack of a priori $L^\infty$ bounds, due to the failure of the induction
	argument, constitutes a major additional difficulty in the nonlinear
	fragmentation setting.

	\medskip
	The main objective of the present paper is to address this physically relevant
	case by considering diffusion coefficients of the form
	\[
	d_i = \frac{1}{i^\alpha}, \qquad \alpha \ge 0,
	\]
	for nonlinear fragmentation equation with diffusion along with bounded breakage and collision kernels. More precisely, the analysis relies on the following assumptions.
	
	\begin{assum}\label{weaker assumption}
		\begin{enumerate}
			\item[(A1)] The collision kernel $a_{i,j}=a_{j,i}\ge0$ satisfies
			\[
			a_{i,j} = (ij)^{-\lambda}, \qquad \lambda \ge 4.
			\]
			
			\item[(A2)] The breakage function $b_{i,j^k}$ satisfies
			\[
			b_{i,j}^k = \frac{2}{i+j-1}, \qquad 1 \le k \le i+j-1.
			\]
			
			\item[(A3)] The diffusion coefficients satisfy
			\[
			d_i = \frac{1}{i^\alpha}, \qquad \alpha \in [0,1].
			\]
		\end{enumerate}
	\end{assum}
	
	It is worth mentioning that the analysis also applies to a broader class of kernels. In particular, one may choose any diffusion coefficients $d_i > 0$
	that are uniformly bounded above, provided that the collision kernel and the
	breakage function satisfy the following conditions.
	
	\begin{assum}\label{strong assumption}
		The following assumptions apply to the collision kernel $a_{i,j}$, the
		breakage function $b_{i,j}^k$, and the diffusion coefficients $d_i$
		
		\begin{enumerate}
			\item[(A1)] The collision kernel $a_{i,j}=a_{j,i}\ge0$ satisfies the summability condition
			\[
			\sum_{i,j \ge 1} (i+j)\, a_{i,j} < \infty.
			\]
			
			\item[(A2)] There exists a constant $B > 0$ such that
			\[
			b_{i,j}^k \le B, \qquad 1 \le k \le i+j-1, \quad i,j \ge 1.
			\]
			
			\item[(A3)] The diffusion coefficients satisfy
			\[
			\lim_{i \to \infty} d_i = 0.
			\]
			
			\item[(A4)] The following summability condition holds
			\[
			\sum_{j=1}^{\infty}
			\sum_{k=1}^{\infty}
			\sum_{i=1}^{j+k-1}
			\frac{\sqrt{\, b^{\,i}_{\,j,k}\, a_{\,j,k}}}{\sqrt{k j\, d_i d_j}}
			\;+\;
			\sum_{i=1}^{\infty}\sum_{j=1}^{\infty}
			\frac{\sqrt{a_{i j}}}{\sqrt{i j\, d_i d_j}}
			< \infty.
			\]
		\end{enumerate}
	\end{assum}

	Next, we define the notion of solution for the nonlinear fragmentation equation \eqref{NFE}.
	\begin{defn}[Weak solution]\label{definition weak solution}
		Let $T>0$ and $i\ge1$. A function
		\[
		f_i \in L^1\!\left((0,T); W^{1,1}(\Omega)\right)
		\]
		is called a weak solution to the system \eqref{NFE} with nonegative initial datum
		$f_i^{\mathrm{in}}\in L^1(\Omega)$ if, for all 
		\[
		\psi \in C_c^\infty([0,T)\times\Omega),
		\]
		the following inequality holds
		\[
		\int_\Omega
		\psi(0,x)\, f_i^{\mathrm{in}}(x)\, dx
		+
		\int_0^T \int_\Omega
		\left(-\partial_t \psi\right) f_i
		\,dx\,dt
		+
		\int_0^T \int_\Omega
		\nabla \psi \cdot \nabla f_i
		\,dx\,dt
		=
		\int_0^T \int_\Omega
		\psi \, Q_{i}(f)
		\,dx\,dt,
		\]
	\end{defn}
	Similarly, we can define the supersolution and the subsolution according to the greater than and less than symbol. To define the supersolution and subsolution, we choose the test function $\psi$ to be nonnegative.
	Next we state the main result of our article:
	\begin{thm}\label{existence result}
		Let the coagulation kernel $a_{i,j}$, the breakage kernel $b_{i,j}^k$, and the diffusion coefficients $d_i$ satisfy either Assumption~\ref{weaker assumption} or Assumption~\ref{strong assumption} for all $i,j,k \in \mathbb{N}$. Suppose that the initial data satisfy
		\[
		\sum_{i=1}^{\infty} \frac{1}{\sqrt{d_i}} \, \|f_i^{\rm{in}}\|_{L^1(\Omega)}^{\frac12} < +\infty \ \mbox{and} \ \left\|\sum_{i=1}^{\infty}if_i^{\rm{in}}\right\|_{L^2(\Omega)}<+\infty.
		\]
		Then the system~\eqref{NFE} admits a global-in-time nonnegative weak solution $\{f_i\}_{i\in\mathbb{N}}$ such that, for every $i \in \mathbb{N}$,
		\[
		f_i \in L^1\!\left((0,T); W^{1,1}(\Omega)\right)
		\cap L^2\!\left((0,T)\times\Omega\right).
		\]
	\end{thm}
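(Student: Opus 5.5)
We plan to follow the classical truncation--compactness scheme, with the degeneracy of $d_i$ absorbed into the summability assumptions through a weighted $L^2$ duality estimate.

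\textbf{Step 1: truncated system.} For $n\in\mathbb{N}$ we replace \eqref{NFE} by a finite system for $f^n=(f^n_1,\dots,f^n_n)$. All sums in $Q_i$ are restricted to sizes $\le n$, and collisions whose products would exceed size $n$ are switched off, i.e.\ $a_{j,k}$ is set to $0$ whenever $j+k>n$. With this choice \eqref{local mass conservation} keeps the mass $\sum_{i\le n} i f_i^n$ exactly conserved at the level of the reaction terms. Since the kernels are bounded and the system is finite, standard theory for reaction--diffusion systems with quadratic, quasi-positive nonlinearities gives a local smooth nonnegative solution. This solution is global because of the $L^1$ mass bound $\sum_i i\|f_i^n(t)\|_{L^1}\le \sum_i i\|f_i^{\rm in}\|_{L^1}$ and the $L^2$ bound of Step 2, combined with the finiteness of the system. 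If needed, one may also add a small regularisation $d_i+1/n$ to avoid degeneracy at fixed $n$.

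\textbf{Step 2: a priori estimates uniform in $n$.} We set $\rho^n=\sum_{i\le n} i f_i^n$ and $M^n=\sum_i i d_i f_i^n$. Since $\sum_i iQ_i^n=0$, we have $\partial_t\rho^n-\Delta M^n=0$ with Neumann boundary conditions. Because $d_i\le 1$ under either assumption, the duality lemma of Pierre--Schmitt gives
\[
\int_{\Omega_T}\rho^n M^n\le C\bigl(\|\rho^{\rm in}\|_{L^2}^2+\|\rho^n\|_{L^1}\bigr),
\]
and this is where the hypothesis $\rho^{\rm in}\in L^2$ enters. We then obtain the weighted bound $\sum_i i^2 d_i\|f_i^n\|^2_{L^2(\Omega_T)}\le C$. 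Hence $\|f_i^n\|_{L^2(\Omega_T)}\le C/(i\sqrt{d_i})$. For fixed $i$ the $L^2$ norm is therefore uniformly bounded, but the decay in $i$ is weakened by $d_i^{-1/2}$.

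Next, Cauchy--Schwarz gives
\[
\|a_{j,k}f_jf_k\|_{L^1}\le a_{j,k}\|f_j\|_{L^2}\|f_k\|_{L^2}\le C\frac{a_{j,k}}{jk\sqrt{d_jd_k}}.
\]
We then control the loss term and the gain term of $Q_i$ in $L^1(\Omega_T)$, uniformly in $n$, with tails that vanish uniformly in $n$. Here (A4) of Assumption~\ref{strong assumption} is used, or the explicit power laws of Assumption~\ref{weaker assumption}. With $\lambda\ge4$ and $\alpha\le1$, the series $\sum (jk)^{-\lambda-1+\alpha/2}$ converges easily, and the gain term carries at most an extra factor $2(j+k)$ from the number of fragments. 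The somewhat odd hypothesis $\sum_i d_i^{-1/2}\|f_i^{\rm in}\|_{L^1}^{1/2}<\infty$ suggests that the authors use a second, per-species estimate: multiply the $i$-th equation by $f_i$ and exploit the sign of the loss term. We would use it to control $\|f_i^n\|_{L^2}$ and $\sqrt{d_i}\|\nabla f_i^n\|_{L^2}$ in a summable way, obtaining a bound in $L^2(0,T;H^1)$ for each fixed $i$, although the constant degenerates as $d_i\to0$.

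\textbf{Step 3: compactness and passage to the limit.} For fixed $i$, $f_i^n$ is bounded in $L^2(0,T;H^1)$, and $\partial_t f_i^n$ is bounded in $L^1(0,T;(H^s)')$ for large $s$, since $Q_i^n$ is bounded in $L^1$. The Aubin--Lions--Simon lemma then gives strong $L^1$ (indeed $L^p$, $p<2$) convergence of $f_i^n\to f_i$ along a diagonal subsequence, and weak $L^2$ convergence. Products $f_j^nf_k^n$ converge in $L^1$: the two factors converge strongly in $L^1$ and are bounded in $L^2$, which gives convergence by equi-integrability via Vitali. Each term of $Q_i^n$ therefore passes to the limit. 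The infinite sums are handled by the uniform tail bounds from Step 2. Passing to the limit in the weak formulation then yields Definition~\ref{definition weak solution}, and the $W^{1,1}$ regularity follows by lower semicontinuity.

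\textbf{Main obstacle.} The hard part is the uniform tail control of the gain term $\frac12\sum_{j>i}\sum_k b^i a f_{j-k}f_k$, which requires summing over all pairs and not merely those with $j-k,k\ge i$. With only the weighted $L^2$ bound degenerating like $d_i^{-1/2}$, we expect to need precisely (A4) to make the resulting series converge uniformly. If that fails, we would fall back on obtaining a supersolution and then upgrading it to a solution via mass conservation.
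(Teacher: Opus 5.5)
Your argument breaks down in Step 3, at the claim that $f^n_jf^n_k\to f_jf_k$ in $L^1$ by Vitali. A uniform $L^2((0,T)\times\Omega)$ bound on the factors makes the products bounded in $L^1$, but not equi-integrable. Strong $L^1$ convergence plus an $L^2$ bound only gives convergence in $L^p$ for $p<2$, hence convergence of the products only in $L^{p/2}$ with $p/2<1$.

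For a concrete failure, take $h_n=\epsilon_n^{-1/2}\mathbf{1}_{A_n}$ with $|A_n|=\epsilon_n\to0$ and $A_n$ shrinking to a point. Then $\|h_n\|_{L^2}=1$ and $h_n\to0$ in $L^1$ and a.e., yet $\int h_n^2=1$ for all $n$. The duality estimate gives you nothing beyond this kind of $L^2$ control, and certainly nothing uniform as $d_i\to0$. So nothing rules out concentration in $f^n_jf^n_k$, and the passage to the limit in $Q_i^n$ is unjustified. This is the standard obstruction for quadratic reaction--diffusion systems controlled only by $L^2$ duality bounds. It is also why the paper never passes to the limit directly in the equation.

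The missing idea is Pierre's truncation on level sets. The paper works with $T_m(\omega_i)$, where $\omega_i=f_i+\eta\sum_{j\neq i}f_j$ and $T_m$ is a concave cutoff.
\begin{itemize}
\item On $\{T_m'(\omega_i)\neq0\}$ one has $f_i\le m$ and $f_j\le m/\eta$ for all $j\neq i$. So $T_m'(\omega_i)(Q_i+\eta\sum_{j\neq i}Q_j)$ is uniformly bounded and converges by dominated convergence.
\item Concavity of $T_m$ turns the equation into an inequality.
\item Mixing species with different diffusion rates produces cross terms $\eta\sum_{j\neq i}(d_j-d_i)\cdots$. These are bounded by $Cm\eta^{1/2}$ using the level-set energy estimate $d\int_{\{|\mathcal F|\le M\}}|\nabla\mathcal F|^2\le M(\|\Theta\|_{L^1}+\|\mathcal F_{\rm in}\|_{L^1})$, summed over $j$.
\item That summation is exactly where $\sum_i d_i^{-1/2}\|f_i^{\rm in}\|_{L^1}^{1/2}<\infty$ and (A4) are used. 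They are not used in a per-species $L^2$ estimate, as you guessed.
\item Sending $\eta\to0$ and then $m\to\infty$ gives a weak supersolution.
\item The exact mass identity $\sum_i i\,Q_{i,\varepsilon}=0$ of the approximations yields a \emph{linear} relation that passes to the limit. This excludes strict inequality for any $i$, so the supersolution is a solution.
\end{itemize}
Your fallback does mention supersolutions plus mass conservation. But you attach it to the tail-control issue and give no mechanism that produces the supersolution inequality, and that mechanism is the heart of the proof.

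Two smaller points.
\begin{itemize}
\item Testing the $i$-th equation with $f_i$ does not give an $L^2(0,T;H^1)$ bound. The gain term produces cubic integrals $\int f_if_{j-k}f_k$, which the $L^2$ estimate does not control. Compactness for fixed $i$ should instead come from the $L^1$-compactness of the Neumann heat solution map, since $Q^n_i$ is bounded in $L^1$.
\item Global existence for your unregularized truncated quadratic system in arbitrary dimension does not follow from mass and $L^2$ bounds alone. This is why the paper divides the reaction terms by $1+\varepsilon\sum_jc_jf_j^2$.
\end{itemize}
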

	\begin{rem}
		The second condition on the initial data allows us to obtain solutions in the space
		$L^2\big((0,T)\times\Omega\big)$, whereas the first condition is of a more technical nature.
		The latter is slightly stronger than the finiteness of the total initial mass.
		Moreover, it reflects the fact that, as the particle size increases, the initial mass per unit size becomes sufficiently small; mathematically, this is captured by the first assumption on the initial condition. This assumption is crucial for establishing the existence of solutions in this article (see equation \eqref{use_small_initial_mas} in  Proposition \ref{control by time parameter} ).
	\end{rem}
	
	We note that the problem involves two main difficulties: first, the number of unknowns is infinite; second, the source terms associated with \eqref{NFE} exhibit quadratic nonlinearities. We further observe that the source terms satisfy a mass-conservation structure, namely,
	\begin{align}\label{mass conservation NCF}
		\sum_{i=1}^{\infty} i \, Q_i = 0 .
	\end{align}
	Such a structure plays an important role in the study of reaction-diffusion systems with a finite number of unknowns. Studies of reaction-diffusion equations with quadratic nonlinearities can be found in \cite{pierre2003weak, Pierre2010}. For a comprehensive treatment of reaction-diffusion systems, we refer the reader to \cite{canizo2014improved, desvillettes2015duality, desvillettes2007global, fitzgibbon2021reaction, fellner2021uniform, das2025existence}. We emphasize that our approach is motivated by the work of Michel Pierre \cite{Pierre2010}. 
	
	In the present setting, the analysis involves additional complexity due to the presence of infinitely many unknowns and infinite summations. Another important structure satisfied by the source terms is quasipositivity; see \eqref{quasipositive}. Namely, for the source term $Q_i$ in \eqref{NFE}, if we set $f_i = 0$ while keeping all other unknowns nonnegative, then $Q_i$ remains nonnegative.
	
	We construct an approximation \eqref{eq:TRNFE}, consisting of a reaction-diffusion system with finitely many unknowns, while preserving both the mass-conservation and quasipositivity structures \eqref{quasipositive} of the source terms. To guarantee the existence of global-in-time solutions, we further regularize the source terms. Consequently, our approximation involves both a truncation of the number of unknowns and a regularization of the source terms.
	
	Thanks to the following theorem, the quasipositivity structure of the source terms ensures that the solutions remain nonnegative, provided the initial data are nonnegative.
	\begin{thm}[Positivity of solutions, \cite{Pierre2010}]\label{positivity of solution}
		Let $\mu_i>0$ for $1\le i\le m$. Let $u_i:(0,T)\times\Omega\to\mathbb{R}$ be a classical solution of the system
		\[
		\left\{
		\begin{aligned}
			\partial_t u_i - \mu_i \Delta u_i &= f_i(u_1,\dots,u_m) && \text{in } (0,T)\times\Omega,\\
			\nabla u_i\cdot\nu &= 0 && \text{on } (0,T)\times\partial\Omega,\\
			u_i(0,x) &\ge 0 && \text{in } \Omega.
		\end{aligned}
		\right.
		\]
		for $1\le i\le m$. Assume that $f=(f_1,\dots,f_m):\mathbb{R}^m\to\mathbb{R}^m$ is quasipositive, that is,
		\begin{align}\label{quasipositive}
			f_i(r_1,\dots,r_{i-1},0,r_{i+1},\dots,r_m) \ge 0,
			\quad \forall (r_1,\dots,r_m)\in [0,+\infty)^m.
		\end{align}
		Then the solution remains nonnegative, namely $u_i\ge 0$ for all $i=1,\dots,m$.
	\end{thm}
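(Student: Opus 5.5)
The plan is an energy argument on the negative parts $u_i^- := \max(-u_i,0)$, closed by Gronwall's lemma. I will assume, as is implicit in the statement and true for the truncated and regularized source terms used later, that $f$ is locally Lipschitz (for instance $C^1$). Fix $T'<T$. Since $u$ is a classical solution, it is bounded on $[0,T']\times\overline\Omega$, say $|u_j|\le M$ for all $j$. Let $L$ be a Lipschitz constant of $f$ on the cube $[-M,M]^m$.

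\textbf{Step 1 (energy identity).} Multiply the $i$-th equation by $-u_i^-$ and integrate over $\Omega$. Since $u_i^-$ is Lipschitz with $\nabla u_i^- = -\nabla u_i\,\mathbf 1_{\{u_i<0\}}$, we have $-\partial_t u_i\,u_i^- = \tfrac12\partial_t (u_i^-)^2$ a.e. The homogeneous Neumann condition removes the boundary term. This gives
\[
\frac12\frac{d}{dt}\int_\Omega (u_i^-)^2\,dx+\mu_i\int_\Omega|\nabla u_i^-|^2\,dx=-\int_\Omega f_i(u)\,u_i^-\,dx .
\]
If one wants to avoid differentiating the non-smooth function $r\mapsto (r^-)^2$ directly, it can be replaced by a smooth convex approximation and the limit taken at the end.

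\textbf{Step 2 (using quasipositivity).} The integrand on the right vanishes except on $\{u_i<0\}$. At such a point, set $\tilde u=(u_1^+,\dots,u_{i-1}^+,0,u_{i+1}^+,\dots,u_m^+)\in[0,\infty)^m$. Quasipositivity \eqref{quasipositive} gives $f_i(\tilde u)\ge0$.

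Both $u$ and $\tilde u$ lie in $[-M,M]^m$. Moreover $|u_j-\tilde u_j|=u_j^-$ for every $j$; for $j=i$ this uses $u_i<0$. Hence
\[
-f_i(u)\,u_i^-\le \bigl(f_i(\tilde u)-f_i(u)\bigr)u_i^-\le L\Bigl(\sum_{j=1}^m u_j^-\Bigr)u_i^- .
\]

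\textbf{Step 3 (Gronwall).} Drop the gradient term and sum over $i$. Using $\bigl(\sum_j u_j^-\bigr)\bigl(\sum_i u_i^-\bigr)\le m\sum_j (u_j^-)^2$, we obtain for $E(t)=\sum_i\int_\Omega (u_i^-)^2\,dx$ the differential inequality
\[
E'(t)\le 2Lm\,E(t).
\]
Since $u_i(0,\cdot)\ge0$, we have $E(0)=0$, so $E\equiv0$ on $[0,T']$. Thus $u_i\ge0$ for all $i$ on $[0,T']$, and since $T'<T$ was arbitrary, on $(0,T)$.

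The only genuinely delicate point is Step 2. It requires comparing $u$ with the projected point $\tilde u$ and controlling the difference by a Lipschitz bound that is uniform along the solution. That is exactly why the argument is carried out on compact time intervals $[0,T']$, where the classical solution is bounded, and why local Lipschitz continuity of $f$ is needed.
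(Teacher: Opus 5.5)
Your argument is correct and complete once you add local Lipschitz continuity of $f$. The paper gives no proof of this statement; it only quotes it from \cite{Pierre2010}, so there is no internal argument to compare against. Your Gronwall estimate on $E(t)=\sum_i\int_\Omega (u_i^-)^2\,dx$ is a valid self-contained proof. Your comparison point $\tilde u$ is simply $u^+=(u_1^+,\dots,u_m^+)$ evaluated on $\{u_i<0\}$.

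The extra hypothesis is not merely a convenience: as printed, with $f$ only quasipositive, the statement is false. Take $m=1$ and $f(r)=-\sqrt{|r|}$. Then $f(0)=0$, so $f$ is quasipositive. Yet $u(t,x)=-t^2/4$ is a smooth, spatially constant solution with homogeneous Neumann data and $u(0,\cdot)=0$, and it is negative for all $t>0$. So some uniqueness-type regularity of $f$ is indispensable. It is available where the paper uses the result, since the proof of Proposition~\ref{prop:existence_truncated} shows that each $Q^n_{i,\varepsilon}$ is globally Lipschitz.

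The other common route works with the modified system whose nonlinearity is $f(v^+)$. On $\{v_i<0\}$, quasipositivity gives $f_i(v^+)\ge 0$ directly. Hence $\frac{d}{dt}\int_\Omega (v_i^-)^2\,dx\le 0$ with no Lipschitz estimate at all, so $v\ge 0$. Then $v$ solves the original system, and $v=u$ by uniqueness. That approach makes the sign argument purely qualitative and uses Lipschitz continuity only through well-posedness. The price is constructing the modified solution and running a continuation argument to show it lives on the whole interval. Your approach instead uses the Lipschitz bound quantitatively along the given solution and needs no auxiliary problem. That is the more economical choice when, as here, a classical solution is already in hand.
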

	We will show that the sequence of solutions to the truncated and regularized system converges to a distributional solution of the nonlinear fragmentation equation \eqref{NFE}. To extract a convergent subsequence from the approximate solutions, we employ the $L^1$ compactness theorem developed in \cite{Pierre2010}. Furthermore, thanks to the quasipositivity structure \eqref{quasipositive} of the source terms in the approximate system, the solutions remain nonnegative.
	
	It is evident that, to ensure the source term belongs to $L^1((0,T)\times\Omega)$, we require a particular weighted sum of $L^2((0,T)\times\Omega)$ norm estimates on the solutions of the approximate system, since the source terms exhibits quadratic growth and involve an summation structure. Such an estimate for reaction-diffusion systems with finitely many unknowns can be found in \cite{desvillettes2007global}. The result is as follows:
	\begin{thm}[\cite{desvillettes2007global}]\label{finite L^2 estimate}
		Let $d_i\geq 0$. Let $\mathcal{F}_i\geq 0$ satisfies: for $1\leq i\leq m$
		\begin{equation*}
			\left \{
			\begin{aligned}
				\partial_t \left( \sum_{i=1}^m \mathcal{F}_i\right)- \Delta \left( \sum_{i=1}^m d_i\mathcal{F}_i\right)=&0 \qquad && \mbox{in}\ (0,T)\times\Omega\\
				\nabla \mathcal{F}_i\cdot \nu=& 0 \qquad && \mbox{on} \ (0,T)\times\partial\Omega\\
				\mathcal{F}_i(0,x)=& \mathcal{F}_{i}^{\rm{in}} \qquad && \mbox{in}\ \Omega.
			\end{aligned}
			\right .
		\end{equation*}
		Let $\mathcal{F}_i^{\rm in} \in L^2((0,T)\times\Omega)$ for $i = 1, \dots, m$.
		Then
		\[
		\int_{0}^T \int_{\Omega} \sum\limits_{i=1}^m \mathcal{F}_i \times \sum\limits_{i=1}^m d_i\mathcal{F}_i \, dx \, dt \leq C(\Omega,T) \max\limits_{1\leq i\leq m}\{d_i\} \times \sum_{i=1}^m \| \mathcal{F}_i^{\rm{in}}\|_{L^2(\Omega)},
		\]
		where the constant $C(\Omega,T)$ depends on the domain and the terminal time $T$.
	\end{thm}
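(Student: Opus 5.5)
The plan is the classical duality argument of \cite{desvillettes2007global}: integrate the conservation law in time and test it against the time derivative of the time-integrated "diffusive flux". Write
\[
M:=\sum_{i=1}^m \mathcal{F}_i,\qquad D:=\sum_{i=1}^m d_i\mathcal{F}_i,\qquad W(t,x):=\int_0^t D(s,x)\,ds,\qquad M^{\rm in}:=\sum_{i=1}^m\mathcal{F}_i^{\rm in}.
\]
Nonnegativity of the $\mathcal{F}_i$ gives the pointwise bound $0\le D\le \overline{d}\,M$, where $\overline{d}:=\max_{1\le i\le m} d_i$. I will first work with smooth solutions, which is the setting of the truncated and regularized system to which the estimate is applied. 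The general case then follows by a standard density/approximation argument.

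First I integrate the equation $\partial_t M-\Delta D=0$ in time on $(0,t)$. This gives, for every $t\in(0,T)$,
\[
M(t,x)-\Delta W(t,x)=M^{\rm in}(x)\quad\text{in }\Omega,\qquad \nabla W\cdot\nu=0\ \text{on }\partial\Omega.
\]
The boundary condition for $W$ is inherited from the Neumann conditions on each $\mathcal{F}_i$. Next I multiply this identity by $\partial_t W=D\ge0$ and integrate over $(0,T)\times\Omega$. Integrating by parts in $x$, the Laplacian term becomes $\int_0^T\!\int_\Omega \nabla W\cdot\nabla\partial_tW=\tfrac12\|\nabla W(T)\|_{L^2(\Omega)}^2\ge0$. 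Dropping this term yields
\[
\int_0^T\!\!\int_\Omega M\,D\,dx\,dt\;\le\;\int_0^T\!\!\int_\Omega M^{\rm in}\,D\,dx\,dt .
\]

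The right-hand side is handled by Cauchy--Schwarz together with the pointwise bound $D^2\le \overline d\, M D$:
\[
\int_0^T\!\!\int_\Omega M^{\rm in} D\le \sqrt{T}\,\|M^{\rm in}\|_{L^2(\Omega)}\Big(\int_0^T\!\!\int_\Omega D^2\Big)^{1/2}\le \sqrt{T\,\overline d}\,\|M^{\rm in}\|_{L^2(\Omega)}\Big(\int_0^T\!\!\int_\Omega MD\Big)^{1/2}.
\]
Absorbing the last factor into the left-hand side gives $\int_0^T\!\int_\Omega MD\le T\,\overline d\,\|M^{\rm in}\|_{L^2(\Omega)}^2$.

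Finally, $\|M^{\rm in}\|_{L^2(\Omega)}\le\sum_i\|\mathcal{F}_i^{\rm in}\|_{L^2(\Omega)}$, which yields the claimed inequality with $C(\Omega,T)=T$. Two caveats apply here. First, the estimate naturally comes out with the \emph{square} of the initial norm, so I would read the statement in that homogeneous form; a bound linear in the initial data cannot hold by scaling. Second, no Poincaré inequality is actually needed, so the constant does not depend on $\Omega$.

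The only delicate point is justifying the time integration and the integration by parts for $W$. For the smooth approximate solutions of the truncated and regularized system this is immediate. For weak solutions one would apply the estimate to the approximations and pass to the limit using Fatou's lemma, since the integrand $MD$ is nonnegative. This is exactly how the estimate will be used later, so I do not expect it to be an obstacle.
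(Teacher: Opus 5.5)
Your argument is correct. It is the standard duality computation underlying the cited result of \cite{desvillettes2007global}: time-integrate the conservation law, test with $\partial_t W=D$, discard $\tfrac12\|\nabla W(T)\|_{L^2(\Omega)}^2$, then apply Cauchy--Schwarz with $D^2\le \overline{d}\,MD$ and absorb. The paper itself gives no proof and only cites that reference.

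Your caveats are also right. By scaling, the bound must involve the square $\big(\sum_i\|\mathcal{F}_i^{\rm in}\|_{L^2(\Omega)}\big)^2$, which is consistent with how the estimate is used in Proposition~\ref{prop:dual_estimate}. The initial data should be taken in $L^2(\Omega)$ rather than $L^2((0,T)\times\Omega)$.
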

	We will obtain a similar weighted sum of $L^2$ norm estimate for the infinite system. 
	For the simpler coagulation model, such an estimate can be found in \cite{canizo2010absence}. Following the ideas in \cite{Pierre2010}, by employing such weighted sum of $L^2$ norm estimates, we will obtain a weak supersolution to the nonlinear fragmentation system~\eqref{NFE}. Together with the mass-conservation structure, this allows us to construct a distributional solution to the nonlinear fragmentation system~\eqref{NFE}.

	\subsection{Plan of the paper}
	
	We divide this article into several sections. In the first section~\ref{TRAS}, we
	introduce a truncated and regularized reaction-diffusion system with finitely many
	unknowns, where the source terms preserve the quasipositive and mass-conservative
	structure. In this section, we analyze the existence of global-in-time nonnegative
	classical solutions and derive regularity estimates for the solutions of the truncated
	and regularized reaction-diffusion system. Here the truncation is in the number of unknowns `$n$' and the regularization `$\varepsilon$' is in the source terms.
	\newline
	Our goal is to recover a solution of the nonlinear fragmentation system \eqref{NFE} 
	as a limiting function of the solutions of the truncated and regularized system. In 
	the section~\ref{RS}, we pass to the limit with respect to the truncation 
	index $n$, which yields a smooth, global-in-time, nonnegative solution of the 
	regularized nonlinear fragmentation system.
	\newline
	In the section~\ref{UECRS}, we analyze the compactness of the solutions of the
	regularized nonlinear fragmentation system. Here, the $L^1((0,T); W^{1,1}(\Omega))$
	compactness theorem, as developed in \cite{Pierre2010}, plays a crucial role.
	\newline
	Passing to the limit with respect to the regularization parameter $\varepsilon$ is more
	delicate due to the presence of infinite nonlinear sums in the source terms. In the
	next section~\ref{TPCRT}, we devote ourselves to analyzing the asymptotic behavior as
	$\varepsilon \to 0$. To handle the infinite sums, we introduce an additional truncation
	in the range of the solutions of the regularized nonlinear fragmentation system. More
	precisely, we estimate the asymptotic behavior with respect to $\varepsilon$ on suitable
	level sets of the regularized solutions. We mostly follow the approach described in \cite{Pierre2010}.
	\newline
	This procedure introduces further difficulties in the time variable, since derivatives
	with respect to both time and space variables are not well behaved near the initial time $t = 0$.
	To overcome this issue, we study the solutions on the time interval $(\delta, T)$ and
	then analyze the limit as $\delta \to 0$. On each level set, we are able to pass to the
	limit in the regularized equation, more precisely, in its weak formulation, with respect to both
	parameters $\delta$ and $\varepsilon$. However, this comes at the cost of obtaining an
	inequality of greater-than-or-equal-to type in the final limit.   
	\newline
	The section~\ref{EWSNFE} is devoted to the construction of a global-in-time weak
	nonnegative supersolution to the nonlinear fragmentation system \eqref{NFE}. Thanks to the
	greater-than-or-equal-to type estimate obtained in the previous section, passing to the
	limit with respect to the level-set parameter yields a global-in-time weak nonnegative
	supersolution.
	\newline
	In the final section~\ref{final_section}, we construct a global-in-time weak
	nonnegative solution to the nonlinear fragmentation system \eqref{NFE}. The
	mass-conservative structure of the truncated regularized system leads us to the existence of a global-in-time weak nonnegative solution.

	\subsection{Notation}
	\begin{itemize}
		\item [$\bullet$] We use the notation $a \lesssim b$ to denote that there exists a constant $\texttt{C} > 0$
		such that $a \le \texttt{C}\, b$. Throughout this article, such constants are independent of all indices and regularization parameters.
	\end{itemize}
	
	Throughout the following sections, we assume that the coefficients
	$a_{i,j}$, $b_{i,j}^k$, and $d_i$ satisfy either Assumption~\ref{weaker assumption}
	or Assumption~\ref{strong assumption}. Moreover, the breakage kernel
	$b_{i,j}^k$ satisfies \eqref{local mass conservation} and
	\eqref{zero value of b}. We further assume that the initial condition
	$f^{\mathrm{in}}$ satisfies \eqref{IC mass finite} and
	\eqref{IC uniform boundedness}.

	\section{\textbf{Truncated regularized approximate system}}\label{TRAS}
	
	Let $\Omega \subset \mathbb{R}^N$ be a smooth bounded domain and let $T>0$.
	For $n \ge 3$ and $\varepsilon>0$, we consider the truncated regularized
	system: for $i=1,\dots,n$
	\begin{equation}
		\label{eq:TRNFE}
		\left\{
		\begin{aligned}
			& \partial_t f_{i,\varepsilon}^n - d_i \Delta f_{i,\varepsilon}^n
			= Q_{i,\varepsilon}^n(f_{\varepsilon}^n)
			&& \text{in } (0,T)\times\Omega \\
			& \nabla f_{i,\varepsilon}^n \cdot \nu = 0
			&& \text{on } (0,T)\times\partial\Omega \\
			& f_{i,\varepsilon}^n(0,x) = f_i^{\mathrm{in}}(x)
			&& \text{in}\ \Omega,
		\end{aligned}
		\right.
	\end{equation}
	where
	\begin{align}
		\label{eq:TRNFEoperator}
		Q_{i,\varepsilon}^n(f_{\varepsilon}^n)
		= \frac{
			\frac{1}{2}\displaystyle\sum_{j=i+1}^{n}\sum_{k=1}^{j-1}
			b_{j-k,k}^i a_{j-k,k}\,
			f_{j-k,\varepsilon}^n f_{k,\varepsilon}^n
			-
			\displaystyle\sum_{j=1}^{n-i}
			a_{i,j}\, f_{i,\varepsilon}^n f_{j,\varepsilon}^n
		}{
			1 + \varepsilon\displaystyle\sum_{j=1}^{n}
			c_j (f_{j,\varepsilon}^n)^2
		}.
	\end{align}
	Here $\varepsilon\in(0,1)$ is an arbitrary constant. The coefficients $a_{i,j}$, $b_{i,j}^k$, and $c_j$ are assumed to be nonnegative and satisfy \ref{strong assumption}. For each $j \in \mathbb{N}$, we define the constant
	\[
	c_j := \sum_{i=1}^{\infty} a_{i,j},
	\]
	so that
	\[
	\sum_{j=1}^{\infty} c_j
	= \sum_{j=1}^{\infty}\sum_{i=1}^{\infty} a_{i,j}
	< +\infty.
	\]
	\begin{prop}[Existence and regularity for the truncated system]
		\label{prop:existence_truncated}
		For every $\varepsilon>0$ and $n \ge 3$, the truncated regularized system \eqref{eq:TRNFE}--\eqref{eq:TRNFEoperator}
		admits a unique global-in-time nonnegative strong solution
		\[
		f_{\varepsilon}^n = \{f_{i,\varepsilon}^n\}_{i=1}^n
		\quad \text{on } (0,T)\times\Omega.
		\]
		Furthermore, for every $0<\delta<T$ and every $1<p<\infty$, 
		\[
		f_{i,\varepsilon}^n \in
		W^{1,p}\big((\delta,T);L^p(\Omega)\big)
		\cap
		L^p\big((\delta,T);W^{2,p}(\Omega)\big),
		\qquad i\in \{1,\dots,n\},
		\]
		and
		\[
		f_{i,\varepsilon}^n \in C^\infty\big((0,T)\times\Omega\big).
		\]
		
		Moreover, the following regularity estimates hold
		\begin{align} \label{eq: SORE}
			\| f_{i,\varepsilon}^n\|_{W^{1,p}((\delta,T); L^p(\Omega))}+ \| f_{i,\varepsilon}^n\|_{L^p((\delta,T);W^{2,p}(\Omega))} \lesssim   \| Q_{i,\varepsilon}^n\|_{L^p((0,T)\times\Omega)}+ \| f_{i}^{\rm{in}}\|_{L^p(\Omega)} \lesssim  \frac{1}{\varepsilon},
		\end{align}
		and
		\[
		\| f_{i,\varepsilon}^n \|_{L^\infty((0,T)\times\Omega)}
		\lesssim \frac{1}{\varepsilon}+ \| f_i^{\rm{in}}\|_{L^{\infty}(\Omega)} \lesssim \frac{1}{\varepsilon},
		\qquad i\in \{1,\dots,n\}.
		\]
		Furthermore,
		\[
		\int_\Omega \sum_{i=1}^{n} i f_{i,\varepsilon}^n(t,x)\,dx
		=
		\int_\Omega \sum_{i=1}^{n} i f_i^{\mathrm{in}}(x)\,dx,
		\qquad \forall\, t\in[0,T].
		\]
	\end{prop}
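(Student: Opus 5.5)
The plan is to treat \eqref{eq:TRNFE}--\eqref{eq:TRNFEoperator} as a finite reaction--diffusion system with a locally Lipschitz right-hand side, and to show that this right-hand side is in fact \emph{bounded} thanks to the regularizing denominator. Every global, regularity and $L^\infty$ statement then follows from linear parabolic theory.

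\textbf{Step 1: the source term is bounded.} Write $Q_{i,\varepsilon}^n = N_i/(1+\varepsilon\sum_j c_j (f_j)^2)$. Each product in the numerator satisfies $|f_{j-k}f_k|\le \tfrac12(f_{j-k}^2+f_k^2)$. The coefficients $b^i_{j-k,k}a_{j-k,k}$ are bounded by $B\,a_{j-k,k}$, and after summing the relevant row sums are dominated by $c_{j-k}$ or $c_k$; here one uses $c_j=\sum_i a_{i,j}$ together with symmetry of $a$. This gives
\[
|N_i|\le C_n\sum_{j=1}^n c_j f_j^2 ,
\]
with $C_n$ depending at most on $B$ and $n$. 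Hence $|Q_{i,\varepsilon}^n|\le C_n/\varepsilon$. I expect a slightly more careful count to remove the dependence on $n$, since $\sum_k$ over a row of $a$ is controlled by $c$, and this is what the $\lesssim 1/\varepsilon$ claim needs.

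I also need $c_j>0$ whenever $a_{\cdot,j}\neq 0$. If $c_j=0$, then $f_j$ never appears multiplied by a nonzero $a$, so the corresponding terms vanish and the bound still holds.

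The map $Q^n_\varepsilon:\mathbb R^n\to\mathbb R^n$ is smooth (a rational function with positive denominator), hence locally Lipschitz. It is also quasipositive: setting $f_i=0$ kills the loss term, and the gain term is nonnegative.

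\textbf{Step 2: existence, positivity and global continuation.} Standard local theory for semilinear parabolic systems with Neumann conditions and smooth bounded $L^\infty$ data \eqref{IC uniform boundedness} gives a unique local classical solution. Theorem~\ref{positivity of solution} gives nonnegativity. Because $|Q|\lesssim 1/\varepsilon$, the comparison principle (or the Duhamel formula with the contractive Neumann heat semigroup) yields
\[
\|f_{i,\varepsilon}^n(t)\|_\infty\le \|f_i^{\rm in}\|_\infty + T\,C/\varepsilon .
\]
So no blow-up occurs and the solution is global on $(0,T)$.

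\textbf{Step 3: regularity estimates.} Apply maximal $L^p$ regularity for $\partial_t-d_i\Delta$ with Neumann condition. To obtain the bound \eqref{eq: SORE} on $(\delta,T)$ independently of the compatibility of the initial data, multiply by a time cutoff vanishing near $0$. Since $Q$ is bounded, $\|Q\|_{L^p}\lesssim 1/\varepsilon$.

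Then bootstrap. $Q$ is a smooth function of $f$, so parabolic Schauder estimates iterated on the equation give $C^\infty$ in the interior $(0,T)\times\Omega$.

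\textbf{Step 4: mass conservation.} Multiply the $i$-th equation by $i$, sum over $i$, and integrate over $\Omega$. The Neumann condition kills the diffusion terms. It remains to check the truncated identity $\sum_{i=1}^n iN_i=0$. Exchange the order of summation in the gain term, writing $j=l+k$ with $l+k\le n$ and $i<l+k$. Then use \eqref{local mass conservation} and \eqref{zero value of b} to get $\sum_i i\,b^i_{l,k}=l+k$, so the gain equals
\[
\tfrac12\sum_{l+k\le n}(l+k)a_{l,k}f_lf_k .
\]
By symmetry this equals $\sum_{l+k\le n} l\,a_{l,k}f_lf_k$, which is exactly the loss term with truncation $j\le n-i$. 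This index bookkeeping is the main technical check. Dividing by the common denominator preserves the cancellation.
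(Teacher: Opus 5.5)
Your proposal is correct and follows essentially the paper's argument. The ingredients are the same: the bound $|Q^n_{i,\varepsilon}|\lesssim 1/\varepsilon$ via $|f_lf_k|\le\tfrac12(f_l^2+f_k^2)$ and the row sums $c_j$, quasipositivity for nonnegativity, the Neumann heat semigroup for the $L^\infty$ bound, parabolic regularity with bootstrapping, and $\sum_i iQ^n_{i,\varepsilon}=0$ for mass conservation.

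The differences are minor. You get global existence from the uniform bound on $Q$ rather than from the paper's global Lipschitz computation, and you carry out the reindexing behind $\sum_i iN_i=0$, which the paper only asserts. Your row-sum argument already gives $|N_i|\le (B/2+1)\sum_j c_jf_j^2$, so the constant is independent of $n$ and the hedge about $C_n$ is unnecessary.
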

	\begin{proof}
		Note that here the source term is the function $\displaystyle{Q_{i,\varepsilon}^n: \mathbb{R}^n\to\mathbb{R}}$, defined as follows
		\[
		Q_{i,\varepsilon}^n(f_{\varepsilon}^n)
		= \frac{
			\frac{1}{2}\displaystyle\sum_{j=i+1}^{n}\sum_{k=1}^{j-1}
			b_{j-k,k}^i a_{j-k,k}\,
			f_{j-k,\varepsilon}^n f_{k,\varepsilon}^n
			-
			\displaystyle\sum_{j=1}^{n-i}
			a_{i,j}\, f_{i,\varepsilon}^n f_{j,\varepsilon}^n
		}{
			1 + \varepsilon\displaystyle\sum_{j=1}^{n}
			c_j (f_{j,\varepsilon}^n)^2
		}
		\]
		Here $\varepsilon$ plays the role of a regularization parameter. We find the expression for the first partial derivatives. We divide the analysis of the differentiation with respect to the `$l$'-th coordinate into three cases: for $1\leq l\leq n, \, l\neq i, \, l\leq n-i$
		\begin{align*}
			\frac{\partial}{\partial x_l}Q_{i,\varepsilon}^n\big((x_1,\cdots,x_n)\big) = &\frac{\frac12 \sum\limits_{j=l+1}^n b^i_{l,j-l}a_{l,j-l} x_{j-l}+\frac12 \sum\limits_{j=l+1}^n b^i_{j-l,l}a_{j-l,l} x_{j-l}-a_{i,l}x_i}{1+\varepsilon\sum\limits_{j=1}^n c_j (x_j)^2}\\
			+ & \frac{-2\varepsilon c_lx_l}{1+\varepsilon\sum\limits_{j=1}^n c_j (x_j)^2} \times \frac{
				\frac{1}{2}\displaystyle\sum_{j=i+1}^{n}\sum_{k=1}^{j-1}
				b_{j-k,k}^i a_{j-k,k}\,
				x_{j-k} x_k
				-
				\displaystyle\sum_{j=1}^{n-i}
				a_{i,j}\, x_ix_j
			}{
				1 + \varepsilon\displaystyle\sum_{j=1}^{n}
				c_j (x_j)^2},\\
		\end{align*}
		for $1\leq l\leq n, \, l\neq i,\, l> n-i$:
		\begin{align*}
			\frac{\partial}{\partial x_l}Q_{i,\varepsilon}^n\big((x_1,\cdots,x_n)\big) = &\frac{\frac12 \sum\limits_{j=l+1}^n b^i_{l,j-l}a_{l,j-l} x_{j-l}+\frac12 \sum\limits_{j=l+1}^n b^i_{j-l,l}a_{j-l,l} x_{j-l}}{1+\varepsilon\sum\limits_{j=1}^n c_j (x_j)^2}\\
			+ & \frac{-2\varepsilon c_lx_l}{1+\varepsilon\sum\limits_{j=1}^n c_j (x_j)^2} \times \frac{
				\frac{1}{2}\displaystyle\sum_{j=i+1}^{n}\sum_{k=1}^{j-1}
				b_{j-k,k}^i a_{j-k,k}\,
				x_{j-k} x_k
				-
				\displaystyle\sum_{j=1}^{n-i}
				a_{i,j}\, x_ix_j
			}{
				1 + \varepsilon\displaystyle\sum_{j=1}^{n}
				c_j (x_j)^2},
		\end{align*}
		whereas when $l=i$, the expression for the $l$th partial derivative is the following
		\begin{equation*}
			\left \{
			\begin{aligned}
				l\leq n-l:&\\
				\frac{\partial}{\partial x_l}Q_{i,\varepsilon}^n\big((x_1,\cdots,x_n)\big) = &\frac{\frac12 \sum\limits_{j=l+1}^n b^i_{l,j-l}a_{l,j-l} x_{j-l}+\frac12 \sum\limits_{j=l+1}^n b^i_{j-l,l}a_{j-l,l} x_{j-l}-a_{l,l}x_l-\sum\limits_{j=1}^{n-l} a_{l,j}x_j}{1+\varepsilon\sum\limits_{j=1}^n c_j (x_j)^2}\\
				+ & \frac{-2\varepsilon c_lx_l}{1+\varepsilon\sum\limits_{j=1}^n c_j (x_j)^2} \times \frac{
					\frac{1}{2}\displaystyle\sum_{j=i+1}^{n}\sum_{k=1}^{j-1}
					b_{j-k,k}^i a_{j-k,k}\,
					x_{j-k} x_k
					-
					\displaystyle\sum_{j=1}^{n-l}
					a_{i,j}\, x_ix_j
				}{
					1 + \varepsilon\displaystyle\sum_{j=1}^{n}
					c_j (x_j)^2}\\
				l> n-l:&\\
				\frac{\partial}{\partial x_l}Q_{i,\varepsilon}^n\big((x_1,\cdots,x_n)\big) = &\frac{\frac12 \sum\limits_{j=l+1}^n b^i_{l,j-l}a_{l,j-l} x_{j-l}+\frac12 \sum\limits_{j=l+1}^n b^i_{j-l,l}a_{j-l,l} x_{j-l}-\sum\limits_{j=1}^{n-l} a_{l,j}x_j}{1+\varepsilon\sum\limits_{j=1}^n c_j (x_j)^2}\\
				+ & \frac{-2\varepsilon c_lx_l}{1+\varepsilon\sum\limits_{j=1}^n c_j (x_j)^2} \times \frac{
					\frac{1}{2}\displaystyle\sum_{j=i+1}^{n}\sum_{k=1}^{j-1}
					b_{j-k,k}^i a_{j-k,k}\,
					x_{j-k} x_k
					-
					\displaystyle\sum_{j=1}^{n-l}
					a_{i,j}\, x_ix_j
				}{
					1 + \varepsilon\displaystyle\sum_{j=1}^{n}
					c_j (x_j)^2}.
			\end{aligned}
			\right .
		\end{equation*}
		Note that the numerators consisting of all linear order terms and the denominators are all second order terms. A simple application of H\"older inequality yields all the first order derivatives are bounded for all $x=(x_1,\cdots,x_n)\in\mathbb{R}^n$. Hence each of the function $Q_{i,\varepsilon}^n$ is globally Lipschitz (Lipschitz constant may depends on $\varepsilon$). Hence classical existence uniqueness theory for parabolic system \cite{amann1995linear, pazy2012semigroups} yields a unique global-in-time solution. The following calculation shows that the function $Q_{i,\varepsilon}^n$ is bounded also.
		\begin{align*}
			\Big|\frac12 \sum\limits_{j=i+1}^n\sum\limits_{k=1}^{j-1} b^i_{j-k,k}a_{j-k,k} x_{j-k}x_{k} -& \sum\limits_{j=1}^{n-i} a_{i,j}x_ix_j\Big| \\
			\leq&  \frac 12\sup\limits_{i,j,k}\{b^i_{j-k,k}\}  \sum\limits_{j=i+1}^n\sum\limits_{k=1}^{j-1} a_{j-k,k}(x_{j-k}^2+x_k^2) + \sum\limits_{j=1}^n a_{i,j} (x_i^2+x_j^2)\\
			\leq & \sup\limits_{i,j,k}\{b^i_{j-k,k}\} \sum\limits_{i=1}^{n}\sum\limits_{j=1}^{n} a_{i,j}x_j^2 +2 \sum\limits_{i=1}^{n}\sum\limits_{j=1}^{n} a_{i,j}x_j^2\\
			\lesssim & \frac{1}{\varepsilon} \left( 1+\varepsilon \sum\limits_{j=1}^n c_j (x_j)^2\right).
		\end{align*}
		Here we write the third line due to symmetricity of the coagulation kernel $a_{i,j}$. Hence we have that:
		\[
		| Q_{i,\varepsilon}^n| \lesssim \frac{1}{\varepsilon}. 
		\]
		Using the $L^{\infty}\to L^{\infty}$ contraction property of the Neumann heat semigroup we deduce the required $L^{\infty}$ bound on the solution. The second order regularity estimate \eqref{eq: SORE} also follows from standered second order regularity estimate of the Neumann heat  \cite{amann1995linear, pazy2012semigroups, quittner2007}.
		\newline
		Note that the source vector $(Q_{1,\varepsilon}^n, \cdots, Q_{n,\varepsilon}^n)$ is quasipositive \eqref{quasipositive}. Hence due to the nonnegativity of the initial data, we have nonnegative unique global-in-time solution. Smoothness follows from Schauder estimates and standard bootstrapping argument for parabolic regularity \cite{quittner2007}. 
		\newline
		Furthermore, the mass conservation estimate follows from the fact that
		\[
		\sum_{i=1}^n iQ_{i,\varepsilon}^n (f_{\varepsilon}^n)=0.
		\]
	\end{proof}

	\section{\textbf{Regularized system}} \label{RS}
	
	In this section, we pass to the limit with respect to the index $n$ in order to obtain
	a global-in-time solution of the regularized approximate system. We note that this regularized
	approximate system involves infinitely many unknowns. More precisely, we have the
	following proposition.
	
	\begin{prop}[Existence and mass conservation for the infinite regularized system]
		\label{prop:infinite_mass}
		For every $\varepsilon>0$, there exists a global-in-time nonnegative strong solution
		\[
		\{f_{i,\varepsilon}\}_{i\ge1}
		\]
		to the system
		\begin{equation}
			\label{eq:RNFE}
			\left\{
			\begin{aligned}
				& \partial_t f_{i,\varepsilon} - d_i \Delta f_{i,\varepsilon}
				= Q_{i,\varepsilon}(f_{\varepsilon})
				&& \text{in } (0,T)\times\Omega \\
				& \nabla f_{i,\varepsilon} \cdot \nu = 0
				&& \text{on } (0,T)\times\partial\Omega \\
				& f_{i,\varepsilon}^n(0,x) = f_i^{\mathrm{in}}(x)
				&& \text{in}\ \Omega,
			\end{aligned}
			\right.
		\end{equation}
		for $i\in\mathbb{N}$, where
		\begin{align}
			\label{eq:RNFEoperator}
			Q_{i,\varepsilon}(f_{\varepsilon})
			= \frac{
				\frac{1}{2}\displaystyle\sum_{j=i+1}^{\infty} \sum_{k=1}^{j-1}
				b_{j-k,k}^i a_{j-k,k}\,
				f_{j-k,\varepsilon} f_{k,\varepsilon}
				-
				\displaystyle\sum_{j=1}^{\infty}
				a_{i,j}\, f_{i,\varepsilon} f_{j,\varepsilon}
			}{
				1 + \varepsilon\displaystyle\sum_{j=1}^{\infty}
				c_j (f_{j,\varepsilon})^2
			}.
		\end{align}
		Moreover,
		\[
		f_{i,\varepsilon} \in C^\infty\big((0,T)\times\Omega\big),
		\qquad
		\| f_{i,\varepsilon} \|_{L^\infty((0,T)\times\Omega)}
		\lesssim \frac{1}{\varepsilon},
		\]
		and
		\[
		\int_\Omega \sum_{i=1}^{\infty} i f_{i,\varepsilon}(t,x)\,dx
		=
		\int_\Omega \sum_{i=1}^{\infty} i f_i^{\mathrm{in}}(x)\,dx,
		\qquad \forall\, t\in[0,T].
		\]
	\end{prop}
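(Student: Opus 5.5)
We obtain $\{f_{i,\varepsilon}\}_{i\ge1}$ as a limit, as $n\to\infty$ with $\varepsilon$ fixed, of the solutions $f^n_{\varepsilon}$ of Proposition~\ref{prop:existence_truncated}. For each fixed $i$ we extend the sequence by setting $f^n_{i,\varepsilon}=0$ for $i>n$. The key observation is that the bounds in Proposition~\ref{prop:existence_truncated} are uniform in $n$. Indeed, $|Q^n_{i,\varepsilon}|\lesssim 1/\varepsilon$ because the numerator is controlled by $\sup b\cdot\sum_{j}c_j x_j^2$ with a constant independent of $n$. Hence $\|f^n_{i,\varepsilon}\|_{L^\infty}\lesssim 1/\varepsilon$, and for every $0<\delta<T$ and $1<p<\infty$ the $W^{1,p}((\delta,T);L^p)\cap L^p((\delta,T);W^{2,p})$ norms of $f^n_{i,\varepsilon}$ are $\lesssim 1/\varepsilon$. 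We also have the uniform bound $\sum_i i\|f^n_{i,\varepsilon}(t)\|_{L^1}=\sum_i i\|f^{\rm in}_i\|_{L^1}$. By Aubin--Lions (or Sobolev embedding for $p$ large) together with a diagonal extraction over $i$, $\delta=1/m$ and $p$, we obtain a subsequence such that $f^n_{i,\varepsilon}\to f_{i,\varepsilon}$ strongly in $L^p((\delta,T)\times\Omega)$ and a.e. in $(0,T)\times\Omega$, and weakly in the above spaces, for every $i$. Near $t=0$ we use the uniform $L^\infty$ bound on $(0,\delta)$, so the convergence also holds in $L^p((0,T)\times\Omega)$. The limits are nonnegative, bounded by $C/\varepsilon$, and satisfy Fatou's inequality $\sum_i i\|f_{i,\varepsilon}(t)\|_{L^1}\le \sum_i i\|f^{\rm in}_i\|_{L^1}$.

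The main step, and the expected obstacle, is passing to the limit in the nonlinear infinite sums of $Q^n_{i,\varepsilon}$. First, the denominator satisfies $\varepsilon\sum_{j\le n}c_j(f^n_{j,\varepsilon})^2\to\varepsilon\sum_j c_j f_{j,\varepsilon}^2$ a.e. by dominated convergence in $j$: the terms are bounded by $c_j C^2/\varepsilon^2$ and $\sum c_j<\infty$. For the loss term, $\sum_{j\le n-i}a_{i,j}f^n_{i,\varepsilon}f^n_{j,\varepsilon}$ is dominated termwise by $a_{i,j}C/\varepsilon^2$, which is summable since $c_i<\infty$, so it converges a.e. The gain term is handled in the same way, using $b\le B$ and the bound $\sum_{j,k}a_{j-k,k}<\infty$ from (A1). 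Since $|Q^n_{i,\varepsilon}|\lesssim1/\varepsilon$ uniformly, dominated convergence gives $Q^n_{i,\varepsilon}(f^n_\varepsilon)\to Q_{i,\varepsilon}(f_\varepsilon)$ in every $L^p((0,T)\times\Omega)$. We can then pass to the limit in the strong (or mild, Duhamel) formulation, which shows that $f_{i,\varepsilon}$ solves \eqref{eq:RNFE} with $Q_{i,\varepsilon}\in L^\infty$. Maximal regularity and the parabolic bootstrap argument used in Proposition~\ref{prop:existence_truncated} (the numerator and denominator are smooth functions of the $f_{j,\varepsilon}$ whose derivatives converge uniformly) then give $f_{i,\varepsilon}\in C^\infty((0,T)\times\Omega)$.

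Finally, mass conservation requires the reverse inequality to Fatou, which means ruling out mass escaping to large sizes. The plan is to prove directly that $\sum_i iQ_{i,\varepsilon}(f_\varepsilon)=0$ pointwise. The double series defining it converges absolutely thanks to (A1): $\sum_{j,k}(j+k)a_{j,k}f_jf_k\le C\varepsilon^{-2}\sum(j+k)a_{j,k}<\infty$, and we use \eqref{local mass conservation} to exchange the order of summation. Next we test the $i$-th equation with $i$ and sum over $i\le M$. This gives $\frac{d}{dt}\int_\Omega\sum_{i\le M}if_{i,\varepsilon}=\int_\Omega\sum_{i\le M}iQ_{i,\varepsilon}$, where the right-hand side tends to $0$ as $M\to\infty$ by dominated convergence, with dominating function $\sum_{i}i|Q_{i,\varepsilon}|\lesssim \varepsilon^{-2}\sum(j+k)a_{j,k}$. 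Integrating in time and letting $M\to\infty$ yields exact conservation of mass. As a fallback, one can instead show tightness $\sup_n\sup_t\int_\Omega\sum_{i\ge M}if^n_{i,\varepsilon}\to0$ as $M\to\infty$ by the same summability estimate, and pass to the limit in the identity of Proposition~\ref{prop:existence_truncated}.
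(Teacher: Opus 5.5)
Your proposal is correct. For the existence part it is essentially the paper's argument: uniform-in-$n$ bounds $|Q^n_{i,\varepsilon}|\lesssim 1/\varepsilon$ and $\|f^n_{i,\varepsilon}\|_{L^\infty}\lesssim 1/\varepsilon$, Aubin--Lions on $(\delta,T)$, and dominated convergence in the series for the gain term, the loss term and the denominator. The paper splits the gain sum at $2M$ and bounds the tail by $\varepsilon^{-2}\sum_{j\ge M}\sum_k a_{j,k}$, which is your termwise domination done by hand; it also passes to the limit in the weak rather than the mild formulation.

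The genuine difference is in mass conservation. The paper never uses $\sum_i iQ_{i,\varepsilon}(f_\varepsilon)=0$ for the infinite system. Instead it integrates the truncated equations to get the tightness bound $\int_\Omega\sum_{i\ge M} i f^n_{i,\varepsilon}(t)\,dx\lesssim \varepsilon^{-2}\times(\text{tail of }\sum_{j,k}(j+k)a_{j,k}) + \sum_{i\ge M} i\|f^{\rm in}_i\|_{L^1(\Omega)}$, uniformly in $n$ and $t$. It then passes to the limit in the exact identity of Proposition~\ref{prop:existence_truncated}, which is exactly your fallback.

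Your main route first identifies the limit equation. It then proves $\sum_i iQ_{i,\varepsilon}=0$ by Fubini, using \eqref{local mass conservation} and the symmetry of $a_{i,j}$, and concludes by dominated convergence with the constant majorant $\varepsilon^{-2}\sum(j+k)a_{j,k}$. Both routes rest on the same two inputs: the $L^\infty$ bound uniform in $j$, and $\sum(i+j)a_{i,j}<\infty$.

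What yours buys is that it is intrinsic. It gives conservation for any mild solution of \eqref{eq:RNFE} whose components are bounded uniformly in $j$. Since the Duhamel formula places $f_{i,\varepsilon}$ in $C([0,T];L^1(\Omega))$, it also yields the identity at every $t$ directly. The paper's Step~4, by contrast, passes from a.e.\ convergence in $(t,x)$ to convergence at a fixed $t$ without comment; this is fixable with compactness in $C([\delta,T];L^p(\Omega))$. What the paper's route buys is that it obtains conservation without first having to identify the limit system.

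One caveat applies to both: the $C^\infty$ claim is only asserted. Bootstrapping an infinite system needs derivative bounds on the $f_{j,\varepsilon}$ that are summable against $c_j$, and the parabolic constants degenerate as $d_j\to0$. So your remark that the derivatives ``converge uniformly'' is no more justified than the paper's appeal to standard bootstrapping.
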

	
	\begin{proof}
		We divide the proof into six steps.
		\newline 
		\textbf{Step 1: Approximation by truncated systems.}
		Thanks to Proposition \ref{prop:existence_truncated}, for each $n\ge3$, let $\{f_{i,\varepsilon}^n\}_{i=1}^n$ be the unique global-in-time strong
		nonnegative solution to the truncated regularized system given by \eqref{eq:TRNFE}.
		In particular,
		\begin{equation}
			\label{eq:mass_truncated}
			\int_\Omega \sum_{i=1}^{n} i f_{i,\varepsilon}^n(t,x)\,dx
			=
			\int_\Omega \sum_{i=1}^{n} i f_i^{\mathrm{in}}(x)\,dx,
			\qquad \forall\, t\in[0,T].
		\end{equation}
		\textbf{Step 2: Compactness and convergence.}
		Fix $i\ge1$, $0<\delta<T$, and $1<p<\infty$.
		Thanks to Proposition~\ref{prop:existence_truncated} uniform parabolic regularity estimate implies that
		\[
		f_{i,\varepsilon}^n \in
		W^{1,p}\big((\delta,T);L^p(\Omega)\big)
		\cap
		L^p\big((\delta,T);W^{2,p}(\Omega)\big).
		\]
		By the Aubin--Lions lemma,
		there exists a subsequence (not relabeled) such that
		\[
		f_{i,\varepsilon}^n \to f_{i,\varepsilon}
		\quad \text{strongly in } L^p((\delta,T);W^{1,p}(\Omega))
		\]
		and almost everywhere in $(\delta,T)\times\Omega$ for all $\delta\in(0,T)$.
		
		\medskip
		\textbf{Step 3: Uniform boundedness of the solution and uniform tail summability of the mass.}
		Thanks to Proposition~\ref{prop:existence_truncated}, we have the following the uniform $L^\infty$-bound of the solution to the truncated regularized system \eqref{eq:TRNFE} 
		\[
		\|  f_{i,\varepsilon}^n \|_{L^\infty((0,T)\times\Omega)}
		\lesssim \frac{1}{\varepsilon}.
		\]
		Hence passing to the limit $n\to+\infty$, we conclude that
		\[
		\|  f_{i,\varepsilon} \|_{L^\infty((0,T)\times\Omega)}
		\lesssim \frac{1}{\varepsilon}.
		\]
		Furthermore, summing over the index $i$ in \eqref{eq:TRNFE} and integrating using the summability of $a_{i,j}$ yields
		\[
		\sup_n
		\int_\Omega \sum_{i=1}^{n} i f_{i,\varepsilon}^n(t,x)\,dx
		< \infty,
		\qquad \forall\, t\in[0,T],
		\]
		and
		\begin{align*}
			\int_{\Omega}\sum_{i=M}^{n} i f_{i,\varepsilon}^n(t,x) &\lesssim \Big(\tfrac{1}{\varepsilon}\Big)^2
			\Big(
			\tfrac12\sum_{i=M}^{n}\sum_{j=i+1}^{n}\sum_{k=1}^{j-1}
			i b_{j-k,k}^i a_{j-k,k}
			+
			\sum_{i=M}^{n}\sum_{j=1}^{n-i} i a_{i,j}
			\Big) |\Omega|
			+
			\sum_{i=M}^{n}\|i f_i^{\mathrm{in}}\|_{L^1(\Omega)} 
			\\
			&\lesssim \Big(\tfrac{1}{\varepsilon}\Big)^2
			\Big(
			\tfrac12\sum_{j=M}^{n}\sum_{k=1}^{n}\sum_{i=1}^{j+k-1}
			i b_{j,k}^i a_{j,k}
			+
			\sum_{i=M}^{n}\sum_{j=1}^{n-i} i a_{i,j}
			\Big) |\Omega|
			+
			\sum_{i=M}^{n}\|i f_i^{\mathrm{in}}\|_{L^1(\Omega)} 
			\\
			& \lesssim \Big(\tfrac{1}{\varepsilon}\Big)^2
			\Big(
			\tfrac12\sum_{j=M}^{n}\sum_{k=1}^{n}(j+k) a_{j,k}
			+
			\sum_{i=M}^{n}\sum_{j=1}^{n-i} i a_{i,j}
			\Big) |\Omega|
			+
			\sum_{i=M}^{n}\|i f_i^{\mathrm{in}}\|_{L^1(\Omega)} \\
			& \lesssim \delta_M \to 0 \ \mbox{as}\ M\to\infty.
		\end{align*}

		\medskip
		\textbf{Step 4: Passage to the limit in the mass.}
		Fix $t\in[0,T]$ and decompose
		\[
		\sum_{i=1}^{n} i f_{i,\varepsilon}^n(t,x)
		=
		\sum_{i=1}^{M} i f_{i,\varepsilon}^n(t,x)
		+
		\sum_{i=M+1}^{n} i f_{i,\varepsilon}^n(t,x).
		\]
		Uniform tail summability from the previous step yields
		\[
		\int_\Omega \sum_{i=M+1}^{n} i f_{i,\varepsilon}^n(t,x)\,dx
		\le \delta_M, \ \forall \, n>M.
		\]
		For fixed $M$, almost everywhere convergence implies
		\[
		\sum_{i=1}^{M} i f_{i,\varepsilon}^n(t,x)
		\to
		\sum_{i=1}^{M} i f_{i,\varepsilon}(t,x),
		\]
		and by dominated convergence,
		\[
		\lim_{n\to\infty}
		\int_\Omega \sum_{i=1}^{M} i f_{i,\varepsilon}^n(t,x)\,dx
		=
		\int_\Omega \sum_{i=1}^{M} i f_{i,\varepsilon}(t,x)\,dx.
		\]
		Letting first $n\to\infty$ and then $M\to\infty$, we conclude that
		\[
		\int_\Omega \sum_{i=1}^{\infty} i f_{i}^{\rm{in}}(x)\,dx= \lim_{n\to\infty}
		\int_\Omega \sum_{i=1}^{n} i f_{i,\varepsilon}^n(t,x)\,dx
		=
		\int_\Omega \sum_{i=1}^{\infty} i f_{i,\varepsilon}(t,x)\,dx.
		\]
		
		\textbf{Step 5: Passing the limit in the weak formulation of truncated regularized system.} Consider the weak formulation corresponding to \eqref{eq:TRNFE}: for every $\psi\in C_c^{\infty}([0,T)\times\Omega)$,
		\[
		\int_\Omega
		\psi(0,x)\, f_i^{\mathrm{in}}(x)\, dx
		+
		\int_0^T \int_\Omega
		\left(-\partial_t \psi\right) f_{i,\varepsilon}^n
		\,dx\,dt
		+
		\int_0^T \int_\Omega
		\nabla \psi \cdot \nabla f_{i,\varepsilon}^n
		\,dx\,dt=
		\int_0^T \int_\Omega
		\psi \, Q_{i,\varepsilon}^n(f_{\varepsilon}^n)
		\,dx\,dt.
		\]
		
		\textbf{Substep 1: Passing to the limit in the right-hand side of the weak formulation.}
		The solution $f_{i,\varepsilon}^n$ and it's gradient can be expressed as:
		\begin{align*}
			f_{i,\varepsilon}^n= \int_{\Omega}G_i(t,0,x,y)f_i^{\rm{in}}(y) \, dy+ \int_0^t \int_{\Omega} G_i(t,s,x,y) Q_{i,\varepsilon}^n(f_{\varepsilon}^n)(s,y) \, dy\, ds,\\
			\nabla f_{i,\varepsilon}^n= \int_{\Omega}\nabla G_i(t,0,x,y)f_i^{\rm{in}}(y) \, dy+ \int_0^t \int_{\Omega}\nabla  G_i(t,s,x,y) Q_{i,\varepsilon}^n(f_{\varepsilon}^n)(s,y) \, dy\, ds,
		\end{align*}
		where $G_i(t,s,x,y)$ be the Neumann Green function corresponding to the heat operator $\partial_t-d_i\Delta$. We use the following  Green's function estimate as described in \cite{mora1983}. There exists a constant $\tilde{\kappa}>0$, depending only on the domain, such that
		\begin{align}\label{Heat kernel estimate}
			0\leq \left| D_x^m G_{d_i}(t,s,x,y)\right| \leq \frac{\tilde{\kappa}}{(4d_i\pi( t-s))^\frac{N+m}{2}}e^{-\kappa \frac{\Vert x-y \Vert^2}{d_i(t-s)}}\leq g_{m,d_i}(t-s,x-y) \qquad 0\leq s<t.
		\end{align}
		Here, the constants $\kappa>0$, depending only on $\Omega$ and independent of the diffusion coefficient. Furthermore, the function $g_{m,d_i}(t-s,x-y)$ is defined in the following way:
		\begin{align}\label{Heat kernel bound function}
			g_{m,d_i}(t,x):= \frac{\tilde{\kappa}}{(4d_i\pi\vert t \vert)^\frac{N+m}{2}}e^{-\kappa \frac{\Vert x\Vert^2}{d_i\vert t\vert}}, \qquad (t,x)\in \mathbb{R}\times\mathbb{R}^{N}.
		\end{align}
		Note that
		\begin{align}\label{Heat kernel integral estimate}
			\left\Vert g_{m,d_i}\right\Vert_{\mathrm{L}^{z}((-T,T)\times\mathbb{R}^N)} \lesssim d_i^{-\frac{(N+m)z-N}{2z}}, \qquad \forall z\in\left[1,1+\frac{1}{N+1}\right), \ \mbox{for} \ m=0,1,
		\end{align}
		and 
		\begin{align}\label{Heat kernel integral estimate, space only}
			\left\Vert g_{m,d_i}\right\Vert_{\mathrm{L}^{z}(\mathbb{R}^N)} \lesssim d_i^{-\frac{(N+m)z-N}{2z}}t^{-\frac{(N+m)z-N}{2z}}, \qquad \forall z\in\left[1,1+\frac{1}{N+1}\right), \ \mbox{for} \ m=0,1.
		\end{align}
		We move onto find an integral estimate of the solution to the truncated regularized system \eqref{eq:TRNFE}. Using triangle inequality we have that:
		\begin{align*}
			\| f_{i,\varepsilon}^n\|_{L^{z}((0,T); W^{1,z}((\Omega))} \leq &\left \| \int_{\Omega}G_i(t,0,x,y)f_i^{\rm{in}}(y) \, dy \right\|_{L^{z}((0,T); W^{1,z}((\Omega))}\\
			&+ \left \| \int_0^t \int_{\Omega}  G_i(t,s,x,y) Q_{i,\varepsilon}^n(f_{\varepsilon}^n)(s,y) \, dy\, ds \right\|_{L^{z}((0,T); W^{1,z}((\Omega))}. 
		\end{align*}
		Let us denote $\displaystyle{\mathcal{E}:= \int_0^t \int_{\Omega}  G_i(t,s,x,y) Q_{i,\varepsilon}^n(f_{\varepsilon}^n)(s,y) \, dy\, ds }$. Let us further denote that
		\begin{equation*}
			\tilde{Q}_{i,\varepsilon}^n (s,y) =
			\left\{
			\begin{aligned}
				&|Q_{i,\varepsilon}^n (f_{\varepsilon}^n)| \qquad (s,y)\in (0,T)\times\Omega\\
				&0 \qquad \qquad \qquad \mbox{outside}.
			\end{aligned}
			\right .
		\end{equation*}
		Then $\displaystyle{\|\mathcal{E}\|_{L^{z}((0,T); W^{1,z}((\Omega))}\leq \| \tilde{g}_{0,d_i}+\tilde{g}_{1,d_i}\|_{L^{1}(\mathbb{R}\times\mathbb{R}^N)}\| \tilde{Q}_{i,\varepsilon}^n\|_{L^{z}(\mathbb{R}\times\mathbb{R^N})}}$, where $\tilde{g}_{m,d_i}$ is the extension of $g_{m,d_i}$, defined in the following way:
		\begin{equation*}
			\tilde{g}_{m,d_i} (s,y) =
			\left\{
			\begin{aligned}
				&g_{m,d_i} (f_{\varepsilon}^n) \qquad (s,y)\in (-T,T)\times\Omega\\
				&0 \qquad \qquad \qquad \mbox{outside}.
			\end{aligned}
			\right .
		\end{equation*}
		Hence
		\[
		\|\mathcal{E}\|_{L^{z}((0,T); W^{1,z}((\Omega))}\lesssim \frac{d_i^{-\frac12}}{\varepsilon}. 
		\]
		Similarly we can show the first expression can also be dominated by $\displaystyle{\frac{d_i^{-\frac12}}{\varepsilon}}$. Hence
		\[
		\| f_{i,\varepsilon}^n\|_{L^{z}((0,T); W^{1,z}((\Omega))} \lesssim \frac{d_i^{-\frac12}}{\varepsilon}. 
		\]
		Fix $z\in\left(1,1+\frac{1}{N+1}\right)$. Using weak lower semicontinuty of the $L^p$ norm, we have that
		\[
		\| f_{i,\varepsilon}\|_{L^{z}((\delta,T); W^{1,z}((\Omega))} \leq \liminf\limits_{n\in\mathbb{N}}\| f_{i,\varepsilon}^n\|_{L^{z}((\delta,T); W^{1,z}((\Omega))}\lesssim \frac{d_i^{-\frac12}}{\varepsilon}.
		\]
		Taking $\delta\to 0$, yields
		\[
		\| f_{i,\varepsilon}\|_{L^{z}((0,T); W^{1,z}((\Omega))} \lesssim \frac{d_i^{-\frac12}}{\varepsilon}.
		\]
		Next, we concentrate on passing the limit in each individual expression present in the L.H.S of the weak formulation. Note that the first expression is independent of the index `$n$'. So we will consider the next two expressions only. We analyze the second expression as follows:
		\begin{align*}
			\Big|\int_0^T \int_\Omega
			\left(-\partial_t \psi\right) f_{i,\varepsilon}^n
			\,dx\,dt - & \int_0^T \int_\Omega
			\left(-\partial_t \psi\right) f_{i,\varepsilon}
			\,dx\,dt\Big| \\
			\leq \Big| \int_{\delta}^T \int_\Omega
			\left(-\partial_t \psi\right) f_{i,\varepsilon}^n
			& \,dx\,dt - \int_{\delta}^T \int_\Omega
			\left(-\partial_t \psi\right) f_{i,\varepsilon}
			\,dx\,dt\Big| \\
			+\sup |\partial_t \psi| & \left(\| f_{i,\varepsilon}^n\|_{L^1((0,\delta)\times\Omega)}+\| f_{i,\varepsilon}\|_{L^1((0,\delta)\times\Omega)}\right)\to \, 0 \ \text{as}\ \delta\to 0.
		\end{align*}
		Similarly, we analyze the third expression. It yields:
		\[
		\lim_{n\to\infty}\int_0^T \int_\Omega
		\nabla \psi \cdot \nabla f_{i,\varepsilon}^n
		\,dx\,dt= \int_0^T \int_\Omega
		\nabla \psi \cdot \nabla f_{i,\varepsilon}
		\,dx\,dt.
		\]
		Hence, for all $\psi\in C^{\infty}_c([0,T)\times\Omega)$, passing the limit with respect to the index `$n$' in the L.H.S, we conclude that:
		\begin{align*}
			\lim_{n\to\infty}&\left( 
			\int_\Omega
			\psi(0,x)\, f_i^{\mathrm{in}}(x)\, dx
			+
			\int_0^T \int_\Omega
			\left(-\partial_t \psi\right) f_{i,\varepsilon}^n
			\,dx\,dt
			+
			\int_0^T \int_\Omega
			\nabla \psi \cdot \nabla f_{i,\varepsilon}^n
			\,dx\,dt
			\right)\\
			=& \int_\Omega
			\psi(0,x)\, f_i^{\mathrm{in}}(x)\, dx
			+
			\int_0^T \int_\Omega
			\left(-\partial_t \psi\right) f_{i,\varepsilon}
			\,dx\,dt
			+
			\int_0^T \int_\Omega
			\nabla \psi \cdot \nabla f_{i,\varepsilon}
			\,dx\,dt.
		\end{align*}
		\medskip

		\textbf{Substep 2: Passing to the limit in the right-hand side of the weak formulation.} We write
		\[
		Q_{i,\varepsilon}^n(f_{\varepsilon}^n)
		=
		\frac{F_{i,\varepsilon}^n(f_{\varepsilon}^n)-L_{i,\varepsilon}^n(f_{\varepsilon}^n)}
		{1+\varepsilon\displaystyle\sum_{j=1}^{n} c_j (f_{j,\varepsilon}^n)^2},
		\]
		where
		\[
		F_{i,\varepsilon}^n
		=
		\frac12\sum_{j=i+1}^{n}\sum_{k=1}^{j-1}
		b_{j-k,k}^i a_{j-k,k}
		f_{j-k,\varepsilon}^n f_{k,\varepsilon}^n,
		\qquad
		L_{i,\varepsilon}^n
		=
		\sum_{j=1}^{n-i} a_{i,j} f_{i,\varepsilon}^n f_{j,\varepsilon}^n.
		\]
		We focus on the gain term \(F_{i,\varepsilon}^n\).
		Fix \(M>i\) and decompose
		\[
		F_{i,\varepsilon}^n
		=
		\frac12\sum_{j=i+1}^{2M}\sum_{k=1}^{j-1}
		b_{j-k,k}^i a_{j-k,k}
		f_{j-k,\varepsilon}^n f_{k,\varepsilon}^n
		+
		\frac12\sum_{j=2M+1}^{n}\sum_{k=1}^{j-1}
		b_{j-k,k}^i a_{j-k,k}
		f_{j-k,\varepsilon}^n f_{k,\varepsilon}^n.
		\]
		Using the uniform bound
		\[
		\|f_{j,\varepsilon}^n\|_{L^\infty((0,T)\times\Omega)}
		\lesssim \frac{1}{\varepsilon},
		\]
		and the boundedness \(b_{j,k}^i\le B\) (assumption~\eqref{strong assumption}), we estimate the tail as
		\[
		0 \le
		\sum_{j=2M+1}^{n}\sum_{k=1}^{j-1}
		b_{j-k,k}^i a_{j-k,k}
		f_{j-k,\varepsilon}^n f_{k,\varepsilon}^n
		\lesssim
		\frac{1}{\varepsilon^2}
		\sum_{j=2M+1}^{n}\sum_{k=1}^{j-1} a_{j-k,k}.
		\]
		By the summability assumption on \(a_{j,k}\),
		\[
		\sum_{j=2M+1}^{n}\sum_{k=1}^{j-1} a_{j-k,k}
		\le
		2\sum_{j=M}^{\infty}\sum_{k=1}^{\infty} a_{j,k}
		=: \delta_M,
		\qquad \delta_M \to 0 \text{ as } M\to\infty.
		\]
		Hence,
		\[
		0 \le F_{i,\varepsilon}^n
		\le
		\frac12\sum_{j=i+1}^{2M}\sum_{k=1}^{j-1}
		b_{j-k,k}^i a_{j-k,k}
		f_{j-k,\varepsilon}^n f_{k,\varepsilon}^n
		+ \delta_M.
		\]
		Taking the upper limit with respect to the index `$n$' and using the strong convergence
		\(f_{j,\varepsilon}^n \to f_{j,\varepsilon}\) for each fixed index, we obtain
		\[
		\limsup_{n\to\infty} F_{i,\varepsilon}^n
		\le
		\frac12\sum_{j=i+1}^{2M}\sum_{k=1}^{j-1}
		b_{j-k,k}^i a_{j-k,k}
		f_{j-k,\varepsilon} f_{k,\varepsilon}
		+ \delta_M.
		\]
		Letting \(M\to\infty\), we conclude
		\[
		\limsup_{n\to\infty} F_{i,\varepsilon}^n
		\le
		\frac12\sum_{j=i+1}^{\infty}\sum_{k=1}^{j-1}
		b_{j-k,k}^i a_{j-k,k}
		f_{j-k,\varepsilon} f_{k,\varepsilon}
		=: F_{i,\varepsilon}.
		\]
		On the other hand, since
		\[
		F_{i,\varepsilon}^n
		\ge
		\frac12\sum_{j=i+1}^{2M}\sum_{k=1}^{j-1}
		b_{j-k,k}^i a_{j-k,k}
		f_{j-k,\varepsilon}^n f_{k,\varepsilon}^n,
		\]
		we infer
		\[
		\liminf_{n\to\infty} F_{i,\varepsilon}^n
		\ge
		\frac12\sum_{j=i+1}^{2M}\sum_{k=1}^{j-1}
		b_{j-k,k}^i a_{j-k,k}
		f_{j-k,\varepsilon} f_{k,\varepsilon}.
		\]
		Letting \(M\to\infty\) again yields
		\[
		\liminf_{n\to\infty} F_{i,\varepsilon}^n
		\ge F_{i,\varepsilon}.
		\]
		Combining the limsup and liminf estimates, we conclude that
		\[
		\lim_{n\to\infty} F_{i,\varepsilon}^n = F_{i,\varepsilon}.
		\]
		Similarly, it can be shown that the loss term
		\[
		L_{i,\varepsilon}^n(f_\varepsilon^n)
		=
		\sum_{j=1}^{n-i} a_{i,j}\, f_{i,\varepsilon}^n f_{j,\varepsilon}^n
		\]
		converges almost everywhere in $(0,T)\times\Omega$ to
		\[
		L_{i,\varepsilon}(f_\varepsilon)
		=
		\sum_{j=1}^{\infty} a_{i,j}\, f_{i,\varepsilon} f_{j,\varepsilon}.
		\]
		Indeed, for each fixed $i$ the summation is finite and the strong
		convergence $f_{j,\varepsilon}^n \to f_{j,\varepsilon}$ yields the
		result. Moreover, since $f_{j,\varepsilon}^n$ is uniformly bounded (Proposition \ref{prop:existence_truncated}) and converges to $f_{j,\varepsilon}$ almost everywhere, the denominator converges almost everywhere to
		\[
		1+\varepsilon\sum_{j=1}^{\infty} c_j (f_{j,\varepsilon})^2.
		\]
		Consequently,
		\[
		Q_{i,\varepsilon}^n(f_\varepsilon^n)
		\longrightarrow
		Q_{i,\varepsilon}(f_\varepsilon)
		:=
		\frac{F_{i,\varepsilon}(f_\varepsilon)-L_{i,\varepsilon}(f_\varepsilon)}
		{1+\varepsilon\displaystyle\sum_{j=1}^{\infty} c_j (f_{j,\varepsilon})^2}
		\quad \text{a.e. in } (0,T)\times\Omega.
		\]
		Note that the sequence $\{Q_{i,\varepsilon}^n(f_\varepsilon^n)\}_n$
		is uniformly bounded in $L^{\infty}((0,T)\times\Omega)$.
		Hence, by the dominated convergence theorem,
		\[
		Q_{i,\varepsilon}^n(f_\varepsilon^n)
		\longrightarrow
		Q_{i,\varepsilon}(f_\varepsilon)
		\quad \text{in } L^1((0,T)\times\Omega).
		\]
		Therefore, for all test functions
		$\psi \in C_c^{\infty}([0,T)\times\Omega)$,
		\[
		\int_0^T \int_\Omega
		\psi\, Q_{i,\varepsilon}^n(f_\varepsilon^n)\,dx\,dt
		\longrightarrow
		\int_0^T \int_\Omega
		\psi\, Q_{i,\varepsilon}(f_\varepsilon)\,dx\,dt.
		\]
		\textbf{Step 6: Regularity.}
		Combining the above limits, we obtain the existence of a global-in-time nonnegative weak
		solution to the regularized system \eqref{eq:RNFE}. We note that the solutions
		$f_{i,\varepsilon}$ are uniformly bounded for each $\varepsilon$ and for all
		$i \in \mathbb{N}$. Moreover, the estimate \eqref{eq:mass_truncated}, together with the
		assumption on the initial mass, yields conservation of mass for solutions of the
		regularized system \eqref{eq:RNFE}. Standard parabolic bootstrapping arguments then imply
		that
		\[
		f_{i,\varepsilon} \in C^\infty((0,T)\times\Omega).
		\]
		Hence, the solution is classical.
	\end{proof}

	\section{\textbf{Uniform estimates and compactness of the regularized solutions}}\label{UECRS}
	
	In this section, we analyze the compactness properties of the solutions to the regularized
	system \eqref{eq:RNFE}. We will make use of the $L^1$ compactness result as described
	in \cite{Pierre2010}. In order to apply this result, it is necessary that the source terms
	belong to $L^1((0,T)\times\Omega)$ uniformly. We begin with the following proposition.
	\begin{prop}
		\label{prop:dual_estimate}
		Assume that
		\[
		\sum_{i=1}^{\infty} i f_i^{\mathrm{in}} \in L^2(\Omega),
		\qquad
		\sup_{i\in\mathbb{N}^*} d_i < +\infty.
		\]
		Then, for all $T>0$, the solutions to the regularized system \eqref{eq:RNFE} satisfy the following estimate
		\begin{equation}
			\label{eq:dual_estimate}
			\int_0^T \int_\Omega
			\left( \sum_{i=1}^{\infty} i d_i f_{i,\varepsilon}(t,x) \right)
			\left( \sum_{i=1}^{\infty} i f_{i,\varepsilon}(t,x) \right)
			\,dx\,dt
			\;\lesssim\;
			\left( \sup_{i\ge1} d_i \right)
			\left\|
			\sum_{i=1}^{\infty} i f_i^{\mathrm{in}}
			\right\|_{L^2(\Omega)}^{2}.
		\end{equation}
	\end{prop}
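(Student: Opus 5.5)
This is the standard duality estimate of Pierre and Schmitt (cf. Theorem \ref{finite L^2 estimate}), applied to the mass density. Set
\[
\rho_\varepsilon:=\sum_{i\ge1} i f_{i,\varepsilon},\qquad A_\varepsilon:=\frac{\sum_{i\ge1} i d_i f_{i,\varepsilon}}{\rho_\varepsilon}\ \ (\text{and } A_\varepsilon:=\sup_i d_i \text{ where } \rho_\varepsilon=0),
\]
so that $0\le A_\varepsilon\le D:=\sup_i d_i$.

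\textbf{Step 1: the mass equation.} Multiply the $i$-th equation of \eqref{eq:RNFE} by $i$ and sum. The regularized operator keeps the structure $\sum_i i\,Q_{i,\varepsilon}(f_\varepsilon)=0$, because the denominator does not depend on $i$ and the numerator satisfies \eqref{mass conservation NCF} by \eqref{local mass conservation}. This gives, in the weak sense,
\[
\partial_t\rho_\varepsilon-\Delta(A_\varepsilon\rho_\varepsilon)=0,\qquad \nabla(A_\varepsilon\rho_\varepsilon)\cdot\nu=0 .
\]
I would justify this at the truncated level \eqref{eq:TRNFE}, where the sum is finite and the functions are smooth. There Theorem \ref{finite L^2 estimate} applies directly with $\mathcal F_i=i f^n_{i,\varepsilon}$ and gives
\[
\int_0^T\!\!\int_\Omega \Big(\sum_{i\le n} i f^n_{i,\varepsilon}\Big)\Big(\sum_{i\le n} i d_i f^n_{i,\varepsilon}\Big)\lesssim D\,\Big\|\sum_{i\le n} i f_i^{\rm in}\Big\|_{L^2}^2 .
\]
The bound is uniform in $n$ and $\varepsilon$. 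If a self-contained argument is preferred, I would redo the duality step: set $W=\int_0^t A\rho$. Then $\partial_t W-A\Delta W=A\rho^{\rm in}$, and testing with $\Delta W$ over $\Omega_T$ yields
\[
\int_0^T\!\!\int_\Omega A\,\rho^2\le \|\rho^{\rm in}\|_{L^2}^2\int_0^T\!\!\int_\Omega A\le C(\Omega,T)\,D\,\|\rho^{\rm in}\|_{L^2}^2 .
\]
One Cauchy--Schwarz/Young step, $\int A\rho^{\rm in}\Delta W\le \tfrac12\int A|\Delta W|^2+\tfrac12\int A(\rho^{\rm in})^2$, controls the right-hand side. Here $A\rho^2=\big(\sum i d_i f_i\big)\big(\sum i f_i\big)$ is exactly the integrand in \eqref{eq:dual_estimate}.

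\textbf{Step 2: passing $n\to\infty$.} The integrand is nonnegative. By Proposition \ref{prop:infinite_mass}, $f^n_{i,\varepsilon}\to f_{i,\varepsilon}$ a.e. along a subsequence, so each partial product $\big(\sum_{i\le M} i f^n_{i,\varepsilon}\big)\big(\sum_{i\le M} i d_i f^n_{i,\varepsilon}\big)$ converges a.e. It is also dominated by the full truncated product. Fatou's lemma therefore gives the bound for the $M$-truncated limit product, with the right-hand side bounded by $D\|\sum_i i f_i^{\rm in}\|_{L^2}^2$. Letting $M\to\infty$ by monotone convergence yields \eqref{eq:dual_estimate}.

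\textbf{Main obstacle.} The hardest point is the rigorous justification of the duality step. The coefficient $A$ may degenerate, since $\inf_i d_i=0$, so the adjoint problem is only weakly parabolic, and the constant must not depend on a lower bound for $A$. I would handle this at the truncated level. There $A\ge \min_{i\le n} d_i>0$, so everything is smooth and $W$ is classical. Since the resulting constant involves only $\sup A\le D$ and $T$, the estimate passes to the limit as in Step 2 without ever using a positive lower bound on $d_i$.
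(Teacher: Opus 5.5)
Your proposal is correct and follows essentially the paper's route. The paper omits the proof and only refers to the duality $L^2$ estimate of Desvillettes--Fellner--Pierre--Vovelle and Ca\~{n}izo--Desvillettes--Fellner applied to the mass density $\sum_i i f_{i,\varepsilon}$; you apply that estimate on the truncated system, where $\sum_{i\le n} iQ^n_{i,\varepsilon}=0$ holds exactly, and then pass $n\to\infty$ by Fatou and monotone convergence. One small slip in your self-contained version: test with $-\Delta W$, and the intermediate bound should read $\int_0^T\!\!\int_\Omega A\rho^2\le 4\int_0^T\!\!\int_\Omega A(\rho^{\mathrm{in}})^2\le 4DT\|\rho^{\mathrm{in}}\|_{L^2(\Omega)}^2$, not $\|\rho^{\mathrm{in}}\|_{L^2}^2\int_0^T\!\!\int_\Omega A$, though your final bound $C(\Omega,T)\,D\,\|\rho^{\mathrm{in}}\|_{L^2}^2$ is correct.
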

	The proof of the proposition follows along the same lines as the $L^2$ estimates
	established in\cite{desvillettes2007global, canizo2010absence}. Since the arguments are
	essentially the same, we omit the details. The above proposition helps us to obtain the required  uniform $L^1((0,T)\times\Omega)$ estimate on the source terms. 
	\begin{lem}[Uniform $L^1$--bound with respect to $\varepsilon$] \label{bound on NR operator}
		Let $\varepsilon\in(0,1)$ and let $f_{i,\varepsilon}$ be the solution of the regularized system \eqref{eq:RNFE}. Then for every $T>0$ and for all $\varepsilon>0$, we have the following estimate
		\[
		\sum_{i=1}^\infty
		\left\|
		\frac{Q_{i,\varepsilon}(f_{\varepsilon})}{d_i}
		\right\|_{L^1((0,T)\times\Omega)}
		\le C,
		\]
		where the constant $C>0$ is independent of $\varepsilon$.
		In particular, for every $i\ge1$ and for all $\varepsilon>0$,
		\[
		Q_{i,\varepsilon}(f_{\varepsilon})\in L^1((0,T)\times\Omega) \ \text{uniformly}.
		\]
	\end{lem}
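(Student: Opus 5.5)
The plan is to drop the regularizing denominator, which is at least $1$, and then bound the gain and loss parts of $Q_{i,\varepsilon}$ separately:
\[
\frac{|Q_{i,\varepsilon}(f_\varepsilon)|}{d_i}\le \frac{1}{d_i}\Big(\tfrac12\sum_{j=i+1}^{\infty}\sum_{k=1}^{j-1} b^i_{j-k,k}a_{j-k,k} f_{j-k,\varepsilon}f_{k,\varepsilon}+\sum_{j\ge1}a_{i,j}f_{i,\varepsilon}f_{j,\varepsilon}\Big).
\]
The only $\varepsilon$-independent quadratic control available is Proposition~\ref{prop:dual_estimate}. It gives a bound on $\int_0^T\!\int_\Omega (\sum_i i d_i f_{i,\varepsilon})(\sum_i i f_{i,\varepsilon})$ in terms of $\sup_i d_i$ and $\|\sum_i i f_i^{\rm in}\|_{L^2(\Omega)}$, both of which are finite under either set of assumptions.

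Each product $f_jf_k$ will be controlled by the geometric mean of the two weighted sums. Since every summand is nonnegative,
\[
\sqrt{j d_j}\,f_{j,\varepsilon}\,\sqrt{k}\,f_{k,\varepsilon}\le \sqrt{\Big(\sum_l l d_l f_{l,\varepsilon}\Big)\Big(\sum_l l f_{l,\varepsilon}\Big)}\quad\text{pointwise.}
\]
Symmetrizing with the roles of $j$ and $k$ swapped gives
\[
f_{j,\varepsilon}f_{k,\varepsilon}\le \frac{1}{\sqrt{jk\,d_jd_k}}\,\sqrt{\mathcal{D}_\varepsilon\mathcal{M}_\varepsilon}\cdot\min\big(\sqrt{d_k},\sqrt{d_j}\big),
\]
where $\mathcal{D}_\varepsilon=\sum_l l d_l f_{l,\varepsilon}$ and $\mathcal{M}_\varepsilon=\sum_l l f_{l,\varepsilon}$. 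Alternatively, and more cleanly, one can use $f_jf_k\le \frac{1}{\sqrt{jk\,d_jd_k}}\,\mathcal{D}_\varepsilon^{1/2}\,(\sqrt{jd_j}f_j)^{1/2}\dots$. In either form, the outcome is that $\int_0^T\!\int_\Omega f_{j,\varepsilon}f_{k,\varepsilon}\lesssim (jk\,d_jd_k)^{-1/2}\int\!\!\int \mathcal{D}_\varepsilon\mathcal{M}_\varepsilon$ times bounded factors, up to constants depending on $\sup_i d_i$.

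Next I integrate over $(0,T)\times\Omega$, sum in $i$, and reindex the gain term by $(j-k,k)\mapsto(j,k)$ with $1\le i\le j+k-1$. This produces
\[
\sum_i\Big\|\frac{Q_{i,\varepsilon}}{d_i}\Big\|_{L^1}\lesssim\Big(\sum_{j,k}\sum_{i=1}^{j+k-1}\frac{b^i_{j,k}a_{j,k}}{d_i\sqrt{jk\,d_jd_k}}+\sum_{i,j}\frac{a_{i,j}}{d_i\sqrt{ij\,d_id_j}}\Big)\sup_i d_i\,\Big\|\sum_i if_i^{\rm in}\Big\|_{L^2}^2.
\]
The main obstacle is that the summability condition (A4) involves square roots, $\sqrt{b a}/\sqrt{kj\,d_id_j}$, rather than $ba/(d_i\sqrt{\cdots})$. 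To match it, I would apply Cauchy--Schwarz in $(t,x)$ differently: bound $\frac{1}{d_i}b a\,f_jf_k$ by writing $ba f_jf_k=\sqrt{ba}\cdot\sqrt{ba}\,f_jf_k$ and using that $a_{j,k}\le 1$ and $b$ is bounded. Then $\sqrt{b a}\le \sqrt{B}$, so $ba/d_i\lesssim \sqrt{ba}/\sqrt{d_i}\cdot\sqrt{ba/d_i}$. The factor $\sqrt{ba/d_i}$ is uniformly bounded, since $d_i=i^{-\alpha}$ with $i\le j+k$ and $a_{j,k}=(jk)^{-\lambda}$ decays fast enough, $\lambda\ge4$, to absorb $(j+k)^{\alpha}$. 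The general Assumption~\ref{strong assumption} I would handle by the analogous observation, with whatever extra decay is needed taken from (A1) and (A4). This reduces the coefficient sums exactly to those appearing in (A4), which are finite. Under Assumption~\ref{weaker assumption} I would simply check the resulting series directly: with $a=(jk)^{-\lambda}$, $b\le 2$ and $d_i^{-1}\le (j+k)^{\alpha}$, every series is dominated by $\sum_{j,k}(j+k)^{2}(jk)^{-\lambda/2}<\infty$ for $\lambda\ge4$.

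Finally, since the resulting constant depends only on $T$, $\Omega$, the kernels and the initial data, it is independent of $\varepsilon$. The second claim follows immediately because $d_i\le\sup d_l<\infty$, so $\|Q_{i,\varepsilon}\|_{L^1}\le \sup_l d_l\,\|Q_{i,\varepsilon}/d_i\|_{L^1}\le C$.
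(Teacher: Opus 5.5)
Your argument has a genuine gap, and it starts at the pointwise step. As displayed, $\sqrt{jd_j}\,f_{j,\varepsilon}\sqrt{k}\,f_{k,\varepsilon}\le\sqrt{\mathcal{D}_\varepsilon\mathcal{M}_\varepsilon}$ and its symmetrized version are false. The left side is quadratic in $f_\varepsilon$ and the right side is linear, so both fail once $f_\varepsilon$ is replaced by $\lambda f_\varepsilon$ with $\lambda$ large; the ``alternatively'' line is also left unfinished. The correct bound, from nonnegativity alone, is $jd_jf_{j,\varepsilon}\cdot kf_{k,\varepsilon}\le\mathcal{D}_\varepsilon\mathcal{M}_\varepsilon$, that is, $f_{j,\varepsilon}f_{k,\varepsilon}\le\mathcal{D}_\varepsilon\mathcal{M}_\varepsilon/(jk\,d_j)$.

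The ``outcome'' you keep, with prefactor $(jk\,d_jd_k)^{-1/2}$, is true but discards a factor $\sqrt{jk}$. That loss is exactly what creates your ``main obstacle.'' To pass from your coefficient $b^i_{j,k}a_{j,k}/(d_i\sqrt{jk\,d_jd_k})$ to the (A4) summand, you need $\sup_{i<j+k}b^i_{j,k}a_{j,k}/(d_id_k)<\infty$, not boundedness of $\sqrt{ba/d_i}$. This holds under Assumption~\ref{weaker assumption}. It is not part of Assumption~\ref{strong assumption} and does not follow from it.

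For a counterexample, take $d_i=i^{-2}$, $b^1_{j,k}=b^{j+k-1}_{j,k}=1$ for $j+k\ge3$ (with $b^1_{1,1}=2$), and $a_{n,n}=n^{-3}$ for $n\in\{2^m: m\ge1\}$, $a_{j,k}=0$ otherwise. These data satisfy (A1)--(A4) and \eqref{local mass conservation}; the (A4) summands are $O(n^{-1/2})$ along $n=2^m$. Yet the term $j=k=n$, $i=2n-1$ of your series equals $(2n-1)^2/n^2\to4$, so your bound is $+\infty$. The ``extra decay'' you hope to borrow from (A1) and (A4) is simply not there.

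The missing observation is that the square roots in (A4) strengthen the hypothesis rather than obstruct it. Keep the sharp bound, placing $d$ on either factor, since the series is symmetric in $j,k$. After reindexing, Proposition~\ref{prop:dual_estimate} leaves you with
\[
\sum_{j,k}\sum_{i=1}^{j+k-1}\frac{b^i_{j,k}a_{j,k}}{jk\,d_id_j}+\sum_{i,j}\frac{a_{i,j}}{ij\,d_id_j}.
\]
Its terms are exactly the squares of the summands in (A4). A summable nonnegative family $(x_\ell)$ satisfies $\sum_\ell x_\ell^2\le(\sup_\ell x_\ell)\sum_\ell x_\ell<\infty$, so this series is finite. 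This is the paper's argument, which pairs $(j-k)f_{j-k,\varepsilon}$ with $kd_kf_{k,\varepsilon}$ in the gain term and $if_{i,\varepsilon}$ with $jd_jf_{j,\varepsilon}$ in the loss term. It covers both sets of assumptions at once.

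Your direct check under Assumption~\ref{weaker assumption} is fine once the pointwise bound is corrected. So is your deduction of the bound on $\|Q_{i,\varepsilon}\|_{L^1}$ from $\sup_l d_l<\infty$.
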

	
	\begin{proof}
		By definition \eqref{eq:RNFEoperator}, the denominator satisfies
		\[
		1+\varepsilon\sum_{j=1}^{\infty} c_j (f_{j,\varepsilon})^2 \ge 1
		\quad \text{for all } \varepsilon>0.
		\]
		Moreover,
		\begin{align*}
			\sum_{i=1}^\infty \left\|
			\frac{Q_{i,\varepsilon}(f_{\varepsilon})}{d_i}
			\right\|_{L^1((0,T)\times\Omega)}
			&\le
			\frac{1}{2}\sum_{i=1}^\infty\left\|
			\sum_{j=i+1}^{\infty} \sum_{k=1}^{j-1}
			\frac{b_{j-k,k}^i a_{j-k,k}}{k(j-k)d_{k}d_i}\,
			(j-k)f_{j-k,\varepsilon} kd_kf_{k,\varepsilon}
			\right\|_{L^1((0,T)\times\Omega)}
			\\
			&\quad+\sum_{i=1}^\infty
			\left\|
			\sum_{j=1}^{\infty}
			\frac{a_{i,j}}{ijd_jd_i}\, if_{i,\varepsilon} jd_jf_{j,\varepsilon}
			\right\|_{L^1((0,T)\times\Omega)}.
		\end{align*}
		By Proposition~\ref{prop:dual_estimate}, there exists a constant $C>0$, independent of $i$ and $\varepsilon$, such that
		\begin{align*}
			\sum_{i=1}^\infty \left\|
			\frac{Q_{i,\varepsilon}(f_{\varepsilon})}{d_i}
			\right\|_{L^1((0,T)\times\Omega)}
			&\le
			C\sum_{i=1}^\infty
			\sum_{j=i+1}^{\infty} \sum_{k=1}^{j-1}
			\frac{b_{j-k,k}^i a_{j-k,k}}{k(j-k)d_{k}d_i}+C\sum_{i=1}^\infty
			\sum_{j=1}^{\infty}
			\frac{a_{i,j}}{ijd_jd_i}\\
			&\le C
			\sum_{j=1}^{\infty} \sum_{k=1}^{\infty}\sum_{i=1}^{j+k-1}
			\frac{b_{j,k}^i a_{j,k}}{kjd_{k}d_i}+C\sum_{i=1}^\infty
			\sum_{j=1}^{\infty}
			\frac{a_{i,j}}{ijd_jd_i} <+\infty.
		\end{align*}
	\end{proof}
	Next, we recall a compactness result from\cite{baras1984, bothe2010, Pierre2010}, which we
	use to establish the convergence of the sequence of solutions to the regularized system \eqref{eq:RNFE}.
	\begin{lem}\label{lem:compactness}
		Let $d>0$. The mapping $( \mathcal{F}^{\rm{in}}, \mathcal{Q})\mapsto \mathcal{F}$, where $\mathcal{F}$ is the solution of
		\begin{equation}\label{eq:compactness_heat}
			\begin{cases}
				\partial_t \mathcal{F} - d \Delta \mathcal{F} = \mathcal{Q} & \text{in } (0,T)\times \Omega \\
				\nabla \mathcal{F}\cdot\nu = 0 & \text{on }  (0,T) \times \partial\Omega \\
				\mathcal{F}(0,\cdot) =  \mathcal{F}^{\rm{in}} & \text{in } \Omega,
			\end{cases}
		\end{equation}
		is compact from $L^{1}(\Omega)\times L^{1}((0,T)\times \Omega)$ into
		$L^{1}((0,T)\times \Omega)$, and even into
		$L^{1}\big((0,T); W^{1,1}(\Omega)\big)$.
	\end{lem}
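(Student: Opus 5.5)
We prove Lemma~\ref{lem:compactness} by splitting the solution map into its two linear pieces and showing each is compact. By linearity, $\mathcal{F}=\mathcal{F}_1+\mathcal{F}_2$, where $\mathcal{F}_1$ solves the homogeneous problem with initial datum $\mathcal{F}^{\rm in}$, and $\mathcal{F}_2$ solves the problem with zero initial datum and source $\mathcal{Q}$. Both are given by the Neumann Green function:
\[
\mathcal{F}_1(t,x)=\int_\Omega G(t,0,x,y)\mathcal{F}^{\rm in}(y)\,dy, \qquad \mathcal{F}_2(t,x)=\int_0^t\int_\Omega G(t,s,x,y)\mathcal{Q}(s,y)\,dy\,ds.
\]
It therefore suffices to prove that each of the maps $\mathcal{F}^{\rm in}\mapsto\mathcal{F}_1$ and $\mathcal{Q}\mapsto\mathcal{F}_2$ is compact into $L^1((0,T);W^{1,1}(\Omega))$.

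\textbf{Step 1: a gain of regularity.} Using the Gaussian bounds \eqref{Heat kernel estimate} together with the integrability \eqref{Heat kernel integral estimate} of $g_{m,d}$ for $m=0,1$, Young's convolution inequality gives boundedness of both maps from $L^1$ into $L^z((0,T);W^{1,z}(\Omega))$ for $z\in[1,1+\frac1{N+1})$. This alone is not compact, so we need more: a fractional gain. Since $|\nabla_x G|\lesssim (t-s)^{-1/2}g_{0,d}$, we have $\|\nabla_x G(t,s,\cdot,y)\|_{L^1}\lesssim (t-s)^{-1/2}$. Estimating differences $\nabla G(t,s,x+h,y)-\nabla G(t,s,x,y)$ via the second-derivative bound $|D^2_xG|\lesssim (t-s)^{-1}g_{0,d}$, and interpolating between this and the first-derivative bound, should give
\[
\int_0^T\!\!\int_{\Omega}|\nabla\mathcal{F}_2(t,x+h)-\nabla\mathcal{F}_2(t,x)|\,dx\,dt\lesssim |h|^{\theta}\|\mathcal{Q}\|_{L^1}
\]
for some $\theta\in(0,1)$. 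Equivalently, the map is bounded $L^1\to L^1((0,T);W^{1+\theta,1}(\Omega))$ for small $\theta$. Similarly, the kernel's time regularity yields control of time translates of $\mathcal{F}_2$ in $L^1((0,T);W^{1,1})$ up to a small error; alternatively one bounds $\partial_t\mathcal{F}_2=d\Delta\mathcal{F}_2+\mathcal{Q}$ in $L^1((0,T);(W^{1,\infty})')$.

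\textbf{Step 2: compactness.} With these bounds, we apply the Aubin--Lions--Simon lemma with $W^{1+\theta,1}\subset\subset W^{1,1}\subset (W^{1,\infty})'$, or the Kolmogorov--Riesz--Fréchet criterion in $L^1((0,T)\times\Omega)$ applied to $\mathcal{F}_2$ and $\nabla\mathcal{F}_2$. This gives relative compactness of $\{\mathcal{F}_2\}$ as $\mathcal{Q}$ ranges over a bounded set of $L^1$.

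For $\mathcal{F}_1$ the same argument applies on $(\tau,T)$, where the semigroup is smoothing and $L^1\to W^{2,1}$ is bounded with constant depending on $\tau$. Near $t=0$ we use uniform smallness: $\|\mathcal{F}_1\|_{L^1((0,\tau);W^{1,1})}\lesssim \int_0^\tau (1+t^{-1/2})\,dt\,\|\mathcal{F}^{\rm in}\|_{L^1}\to0$ as $\tau\to0$. A diagonal argument then concludes. Since the boundary is smooth and bounded, translations can be handled via extension or local charts, or through interior estimates plus boundary-strip smallness (the strip has small measure and the functions are bounded in $L^z$ with $z>1$).

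\textbf{Main obstacle.} The delicate point is the fractional-derivative estimate of Step 1 for the gradient in $L^1$. At the endpoint $L^1$, maximal regularity fails, so the bound must come from the kernel integrability $\int_0^T (t-s)^{-(1+\theta)/2}\,ds<\infty$, which requires $\theta<1$. The boundary behaviour of the Neumann kernel must also be checked. If this estimate proves troublesome, we would instead cite \cite{baras1984, Pierre2010}, where exactly this $L^1$ compactness of the heat solution operator is established.
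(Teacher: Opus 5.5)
Your argument is correct in outline, but it is not the paper's route. The paper gives no proof of this lemma and simply imports it from \cite{baras1984, bothe2010, Pierre2010}, which is exactly your fallback. What you supply instead is a self-contained quantitative proof. The interpolated kernel bound $\int_\Omega|\nabla_xG(t,s,x+h,y)-\nabla_xG(t,s,x,y)|\,dx\lesssim\min\{(t-s)^{-1/2},|h|(t-s)^{-1}\}\le|h|^{\theta}(t-s)^{-(1+\theta)/2}$ is right, and it is integrable in $t$ exactly when $\theta<1$. Since it is uniform in $y$, you may take the supremum over $h$ before integrating in time. So $\mathcal{F}_2$ is genuinely bounded in $L^1((0,T);X)$, with $X$ a Nikolskii-type space of order $1+\theta$; this space embeds compactly into $W^{1,1}(\Omega)$ once near-boundary translates are handled by your interior-estimate-plus-equi-integrability device (the $L^z$ bound with $z>1$). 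Combined with the bound on $\partial_t\mathcal{F}_2$ in $L^1((0,T);(W^{1,\infty}(\Omega))')$, Aubin--Lions--Simon closes the argument. Your route buys explicit extra regularity of the solution map, at the cost of Gaussian bounds on $D_x^2G$ up to $\partial\Omega$ and some boundary bookkeeping. The obstacle you flag can in fact be bypassed by a softer semigroup argument. It uses only that the Neumann heat semigroup $S(t)$ is compact on $L^1(\Omega)$ for $t>0$ and that $\|\nabla S(t)\|_{L^1\to L^1}\lesssim t^{-1/2}$: for $r>0$,
\[
\nabla\mathcal{F}_2(t)=\nabla S(r/2)\int_0^{\max(t-r,0)}S(t-r/2-s)\mathcal{Q}(s)\,ds+\int_{\max(t-r,0)}^{t}\nabla S(t-s)\mathcal{Q}(s)\,ds .
\]
The first term takes values in a fixed compact subset of $L^1(\Omega)$, and its time translates are controlled because $S(k)\to I$ uniformly on compact sets. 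The second term has $L^1((0,T)\times\Omega)$-norm $\lesssim r^{1/2}\|\mathcal{Q}\|_{L^1((0,T)\times\Omega)}$. So no second-derivative kernel estimate or fractional space is needed, and the initial-datum part is treated exactly as you do, by cutting off $(0,\tau)$.
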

	We are now well equipped to establish compactness for the solutions of the
	regularized system \eqref{eq:RNFE}. More precisely, we have the following lemma.
	\begin{lem}[Compactness]
		\label{lem:compactness of sequence}
		Let $f_{i,\varepsilon}$ be the solution of the regularized system \eqref{eq:RNFE} for all $i\in\mathbb{N}$ and $\varepsilon>0$. Assume that, for each $i\ge1$, the family of source terms
		$\{Q_{i,\varepsilon}(f_{\varepsilon})\}_{\varepsilon>0}$ is uniformly bounded in
		$L^1((0,T)\times\Omega)$ with respect to $\varepsilon$.
		Then, for each fixed $i\ge1$, the sequence
		$\{f_{i,\varepsilon}\}_{\varepsilon>0}$ is relatively compact in
		$L^1((0,T)\times\Omega)$ and, up to extraction of a subsequence,
		\[
		f_{i,\varepsilon} \to f_i
		\quad \text{in } L^1((0,T); W^{1,1}(\Omega)).
		\]
	\end{lem}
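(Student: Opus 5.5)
The argument applies Lemma~\ref{lem:compactness} to each component separately, with the diffusion coefficient $d=d_i>0$ held fixed.

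Fix $i\ge1$. Since $f_{i,\varepsilon}$ is a classical solution of \eqref{eq:RNFE} (Proposition~\ref{prop:infinite_mass}), it coincides with the unique solution $\mathcal{F}$ of \eqref{eq:compactness_heat} with data $d=d_i$, $\mathcal{F}^{\rm in}=f_i^{\rm in}$ and $\mathcal{Q}=Q_{i,\varepsilon}(f_\varepsilon)$. To identify the two, I would use the Duhamel/Green representation already written down in the proof of Proposition~\ref{prop:infinite_mass}, or simply uniqueness of mild solutions with $L^1$ data.

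The initial datum $f_i^{\rm in}$ does not depend on $\varepsilon$. The family $\{Q_{i,\varepsilon}(f_\varepsilon)\}_{\varepsilon}$ is bounded in $L^1((0,T)\times\Omega)$: this is the hypothesis of the lemma, and it is also supplied by Lemma~\ref{bound on NR operator}. Indeed,
\[
\|Q_{i,\varepsilon}\|_{L^1}\le d_i\sum_k\|Q_{k,\varepsilon}/d_k\|_{L^1}\le C d_i .
\]
Hence the pairs $(f_i^{\rm in},Q_{i,\varepsilon}(f_\varepsilon))$ lie in a bounded subset of $L^1(\Omega)\times L^1((0,T)\times\Omega)$. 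Since the solution map of Lemma~\ref{lem:compactness} is compact into $L^1((0,T);W^{1,1}(\Omega))$, the image $\{f_{i,\varepsilon}\}_\varepsilon$ is relatively compact in that space, and therefore also in $L^1((0,T)\times\Omega)$. So for each fixed $i$ we can take a sequence $\varepsilon_m\to0$ and extract a subsequence along which $f_{i,\varepsilon_m}\to f_i$ in $L^1((0,T);W^{1,1}(\Omega))$.

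To get one subsequence that works for every $i$ at once, I would use a Cantor diagonal argument. Extract a subsequence for $i=1$, then a further subsequence for $i=2$, and so on, and take the diagonal sequence. This is possible because the index set $\mathbb{N}$ is countable and each extraction relies only on the $i$-wise uniform bound. Passing to a further subsequence, we may also assume $f_{i,\varepsilon}\to f_i$ almost everywhere for every $i$, which will be useful later. Nonnegativity of $f_i$ then follows from the almost everywhere limit.

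The only delicate point is checking that Lemma~\ref{lem:compactness} really applies. Its compactness statement concerns the solution operator on bounded subsets of $L^1$. I therefore need the regularized solution to be exactly the (mild) solution produced by that operator, not merely a weak solution, which is ensured by the smoothness established in Proposition~\ref{prop:infinite_mass}. I also need the constant $d_i$ to stay fixed, and it does, since $i$ is fixed while $\varepsilon$ varies. No uniformity in $i$ is required here. That is why degeneracy of $d_i$ as $i\to\infty$ causes no difficulty at this stage.
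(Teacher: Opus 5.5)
Your proposal is correct and follows the paper's proof. For fixed $i$ you regard $f_{i,\varepsilon}$ as the solution of the linear Neumann heat problem with fixed coefficient $d_i$, $\varepsilon$-independent datum $f_i^{\rm in}$ and a source that is bounded in $L^1$ by Lemma~\ref{bound on NR operator}, and you then invoke the compactness of the solution map in Lemma~\ref{lem:compactness}. Your extra Cantor diagonal extraction, which yields one subsequence converging in $L^1((0,T);W^{1,1}(\Omega))$ and a.e. for every $i$ at once, is not written out in the paper, but its later $\varepsilon\to0$ arguments implicitly rely on it.
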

	
	\begin{proof}
		For each fixed $i\ge1$, the function $f_{i,\varepsilon}$ solves the linear
		parabolic problem
		\[
		\partial_t f_{i,\varepsilon} - d_i \Delta f_{i,\varepsilon}
		= Q_{i,\varepsilon}(f_{\varepsilon})
		\quad \text{in } (0,T)\times\Omega,
		\]
		with homogeneous Neumann boundary condition and initial data $f_i^{\mathrm{in}}\in L^1(\Omega)$.
		By Lemma~\ref{bound on NR operator}, the right-hand side $Q_{i,\varepsilon}(f_{\varepsilon})$ is uniformly
		bounded in $L^1((0,T)\times\Omega)$ with respect to $\varepsilon>0$.
		Therefore, by Lemma~\ref{lem:compactness}
		\[
		(f_i^{\mathrm{in}}, Q_{i,\varepsilon}(f_{\varepsilon})) \mapsto f_{i,\varepsilon}
		\]
		is compact from $L^1(\Omega)\times L^1((0,T)\times\Omega)$ to
		$L^1((0,T)\times\Omega)$, and even to
		$L^1((0,T); W^{1,1}(\Omega))$.
		Consequently, the family $\{f_{i,\varepsilon}\}_{\varepsilon>0}$ is
		relatively compact in $L^1((0,T)\times\Omega)$, and the stated convergence
		follows up to a subsequence.
	\end{proof}
	
	\section{\textbf{Control of the Regularized Term via a Truncation Procedure}} \label{TPCRT}
	
	At this stage, for each $\varepsilon>0$, the regularized system admits a global-in-time strong solution $(f_{i,\varepsilon})_{i\ge1}$, and for each fixed $i\ge1$ the
	family $\{f_{i,\varepsilon}\}_{\varepsilon>0}$ is relatively compact in
	$L^1((0,T)\times\Omega)$. However, this compactness is not sufficient to pass to
	the limit $\varepsilon\to0$ in the system \eqref{eq:RNFE}, nor even in it's weak formulation. Indeed, the right-hand side of the regularized system \eqref{eq:RNFE} contains the regularized term
	\[
	\varepsilon \sum_{j=1}^{\infty} c_j (f_{j,\varepsilon})^2.
	\]
	Although $\sum_{j=1}^{\infty} c_j < \infty$, no uniform bound is available for
	$\sum_{j=1}^{\infty} c_j (f_{j,\varepsilon})^2$ as $\varepsilon\to0$.
	\newline
	To overcome this difficulty, we employ an additional truncation argument following the
	approach of Pierre~\cite{Pierre2010}. This allows us to control the nonlinear terms
	pointwise and to pass to the limit in a suitable weak sense. Here, the truncation is
	performed on the range of the solutions. More precisely, we consider level sets of the
	form
	\[
	\left\{\, f_{i,\varepsilon} + \eta \sum_{\substack{j=1 \\ j \neq i}}^{\infty} f_j \le
	\mathcal{K} \,\right\},
	\]
	where $\mathcal{K} > 0$ is arbitrary. The role of the parameter $\eta > 0$ is to recover
	the function $f_i$ in the limit $\eta \to 0$, the
	influence of the other components in the level sets vanishes. Another difficulty arises from differentiating the infinite sum with respect to both time and space variables, as it is not well behaved near the initial time $t = 0$. To overcome this issue, we introduce a time-shift parameter $\delta > 0$ and recover the initial term
	in the limit as $\delta \to 0$. We begin by defining the following truncation. The level sets of the solution are then recovered from the support of its derivative.

	\subsection{Definition of the truncation}
	
	For $m>0$, we introduce a truncation function $T_m:\mathbb{R}\to\mathbb{R}$ such
	that
	\begin{align}\label{m_trunction}
		T_m(\sigma)=\sigma \quad \text{for } 0\le \sigma \le m-1,
		\qquad
		T_m(\sigma)=m \quad \text{for } \sigma \ge m,
	\end{align}
	and which is extended smoothly so that
	\[
	T_m \in C^2(\mathbb{R}), \qquad
	0 \le T_m'(\sigma) \le 1,
	\qquad
	-1 \le T_m''(\sigma) \le 0
	\quad \text{for } \sigma \ge 0.
	\]
	For each $i\ge1$ and $\varepsilon>0$, we define
	\[
	\omega_{i,\varepsilon}
	:= f_{i,\varepsilon}
	+ \eta \sum_{j\neq i} f_{j,\varepsilon},
	\qquad
	v^m_{i,\varepsilon}
	:= T_m(\omega_{i,\varepsilon}),
	\]
	where $\eta>0$ is fixed.
	\newline
	By construction, the truncated variables satisfy the uniform bound
	\[
	0 \le v^m_{i,\varepsilon} \le m
	\quad \text{a.e. in } (0,T)\times\Omega,
	\]
	independently of $\varepsilon$.
	
	\subsection{Justification of weak time and spatial derivatives}
	
	Before deriving the differential inequality satisfied by the truncated
	quantities, we rigorously justify that the weak time derivative and the weak
	spatial derivatives of the relevant infinite sums are well defined.
	
	\subsubsection*{Regularity and uniform estimates:}
	
	Thanks to Proposition~\ref{prop:infinite_mass} and the second order regularity estimate \cite{quittner2007}, for every $\varepsilon>0$,
	$i\ge1$, $0<\delta<T$, and $1<p<\infty$, the strong solutions satisfy
	\begin{align}
		\label{eq:Wp_est}
		\| f_{i,\varepsilon} \|_{L^p\!\big((0,T); W^{1,p}(\Omega)\big)}
		&\le 
		\frac{C}{\varepsilon^{2}}
		\left(
		\sum_{j=i+1}^{\infty}\sum_{k=1}^{j-1}
		b_{j-k,k}^{\,i}\, a_{j-k,k}
		+
		\sum_{j=1}^{\infty} a_{i,j}
		\right)+C\,\| f_i^{\mathrm{in}} \|_{L^{p}(\Omega)},
		\\[1ex]
		\label{eq:dt_est}
		\| \partial_t f_{i,\varepsilon} \|_{L^{p}\!\big((\delta,T)\times\Omega\big)}
		&\le
		\frac{C}{d_i\,\varepsilon^{2}}
		\left(
		\sum_{j=i+1}^{\infty}\sum_{k=1}^{j-1}
		b_{j-k,k}^{\,i}\, a_{j-k,k}
		+
		\sum_{j=1}^{\infty} a_{i,j}
		\right)
		+ C\,\| f_i^{\mathrm{in}} \|_{L^{p}(\Omega)} .
	\end{align}
	The constant $C>0$ is independent of $i$ and $\varepsilon$. The summability assumptions on the coefficients (see assumption~\ref{strong assumption}) imply that the right-hand sides
	of \eqref{eq:Wp_est}--\eqref{eq:dt_est} are summable with respect to $i$.

	\subsubsection*{Time derivative}
	
	Fix $0<\delta<T$. Letting
	$n\to\infty$, we obtain
	\[
	\sum_{i=1}^{n} \partial_t f_{i,\varepsilon}
	\;\longrightarrow\;
	\sum_{i=1}^{\infty} \partial_t f_{i,\varepsilon}
	\quad \text{in } L^2\!\big((\delta,T)\times\Omega\big).
	\]
	Hence the series $\sum_{i=1}^{\infty} \partial_t f_{i,\varepsilon}$ converges in
	$L^1((\delta,T)\times\Omega)$. Let $\phi\in C_c^\infty((\delta,T)\times\Omega)$. By dominated convergence, we
	have
	\begin{align*}
		\int_{\delta}^{T}\!\!\int_{\Omega}
		\Big(\sum_{i=1}^{\infty} f_{i,\varepsilon}\Big)\,\partial_t \phi
		&=
		\lim_{n\to\infty}
		\int_{\delta}^{T}\!\!\int_{\Omega}
		\Big(\sum_{i=1}^{n} f_{i,\varepsilon}\Big)\,\partial_t \phi
		\\
		&=
		-\lim_{n\to\infty}
		\int_{\delta}^{T}\!\!\int_{\Omega}
		\Big(\sum_{i=1}^{n} \partial_t f_{i,\varepsilon}\Big)\,\phi
		\\
		&=
		-\int_{\delta}^{T}\!\!\int_{\Omega}
		\Big(\sum_{i=1}^{\infty} \partial_t f_{i,\varepsilon}\Big)\,\phi .
	\end{align*}
	Therefore,
	\[
	\partial_t\!\left(\sum_{i=1}^{\infty} f_{i,\varepsilon}\right)
	=
	\sum_{i=1}^{\infty} \partial_t f_{i,\varepsilon}.
	\]
	
	\subsubsection*{Spatial gradient}
	
	Similarly using second order regularity estimate \cite{quittner2007} and summability assumption on the coefficients (see assumption~\eqref{strong assumption}) helps us conclude 
	\begin{align*}
		\sum_{i=1}^{n} \frac{\delta}{\delta x_l} f_{i,\varepsilon}
		\;\longrightarrow\;
		\sum_{i=1}^{\infty} \frac{\delta}{\delta x_l} f_{i,\varepsilon}
		\quad &\text{in } L^2\!\big((\delta,T)\times\Omega\big), \, \forall\, l=1,\cdots,N,\\
		\sum_{i=1}^{n} \frac{\delta^2}{\delta x_r\delta x_l} f_{i,\varepsilon}
		\;\longrightarrow\;
		\sum_{i=1}^{\infty} \frac{\delta^2}{\delta x_r\delta x_l} f_{i,\varepsilon}
		\quad &\text{in } L^2\!\big((\delta,T)\times\Omega\big), \, \forall\, l,r=1,\cdots,N.
	\end{align*}
	Hence $\sum_{i=1}^{\infty} \frac{\delta}{\delta x_l} f_{i,\varepsilon}$ is well defined in
	$L^1((0,T)\times\Omega)$. For any test function
	$\boldsymbol{\psi}\in C_c^\infty((0,T)\times\Omega;\mathbb{R}^N)$, we compute
	\begin{align*}
		\int_0^T\!\!\int_\Omega
		\Big(\sum_{i=1}^{\infty} f_{i,\varepsilon}\Big)
		\frac{\delta}{\delta x_l}\!\boldsymbol{\psi}
		&=
		\lim_{n\to\infty}
		\int_0^T\!\!\int_\Omega
		\Big(\sum_{i=1}^{n} f_{i,\varepsilon}\Big)
		\frac{\delta}{\delta x_l}\!\boldsymbol{\psi}
		\\
		&=
		-\lim_{n\to\infty}
		\int_0^T\!\!\int_\Omega
		\Big(\sum_{i=1}^{n} \frac{\delta}{\delta x_l} f_{i,\varepsilon}\Big)
		\boldsymbol{\psi}
		\\
		&=
		-\int_0^T\!\!\int_\Omega
		\Big(\sum_{i=1}^{\infty} \frac{\delta}{\delta x_l} f_{i,\varepsilon}\Big)
		\boldsymbol{\psi}.
	\end{align*}
	Thus,
	\[
	\frac{\delta}{\delta x_l}\!\left(\sum_{i=1}^{\infty} f_{i,\varepsilon}\right)
	=
	\sum_{i=1}^{\infty} \frac{\delta}{\delta x_l} f_{i,\varepsilon}, \quad \mbox{in}\ (\delta,T)\times\Omega, \ \  \forall\, l=1,\cdots,N.
	\]
	Similarly, for the second derivative also, we have that
	\[
	\frac{\delta^2}{\delta x_r\delta x_l}\!\left(\sum_{i=1}^{\infty} f_{i,\varepsilon}\right)
	=
	\sum_{i=1}^{\infty} \frac{\delta^2}{\delta x_r\delta x_l} f_{i,\varepsilon}, \quad \mbox{in}\ (\delta,T)\times\Omega, \ \  \forall\, l,r=1,\cdots,N.
	\]

	
	

	\subsection{Differential inequality satisfied by the truncation}
	
	Using the chain rule and the properties of $T_m$, one obtains, in the sense of
	distributions,
	\begin{align}\label{differential inequality weak form}
		\partial_t v^m_{i,\varepsilon}
		- d_i \Delta v^m_{i,\varepsilon}
		\;\ge\;
		\Lambda^m_{i,\varepsilon}
		+
		\eta\Gamma^m_{i,\varepsilon},
	\end{align}
	where
	\[
	\Lambda^m_{i,\varepsilon}
	=
	T_m'(\omega_{i,\varepsilon})
	\left(
	Q_{i,\varepsilon}
	+ \eta \sum_{j\ne i} Q_{j,\varepsilon}
	\right),
	\]
	and
	\[
	\Gamma^m_{i,\varepsilon}
	=
	T_m'(\omega_{i,\varepsilon})
	\sum_{j\ne i} (d_j-d_i)\, \Delta f_{j,\varepsilon}.
	\]
	
	\subsection{Weak formulations of the truncated inequality}
	Take the time derivative and second order space derivative of $	v^m_{i,\varepsilon} = T_m(\omega_{i,\varepsilon})$ to get 
	\[
	\partial_t v^m_{i,\varepsilon}=
	T_m'(\omega_{i,\varepsilon}) \partial_t \omega_{i,\varepsilon}
	= T_m'(\omega_{i,\varepsilon})
	\left( 
	\partial_t f_{i,\varepsilon}
	+ \eta \sum_{j\ne i} \partial_t f_{j,\varepsilon}
	\right)
	\]
	and
	\[
	\Delta v^m_{i,\varepsilon}
	= T_m''(\omega_{i,\varepsilon})
	\left| 
	\nabla \omega_{i,\varepsilon}
	\right|^2
	+ T_m'(\omega_{i,\varepsilon})
	\left( 
	\Delta w_{i,\varepsilon}
	\right).
	\]
	We use the fact $T_m''\le 0$, it yields
	\[
	\Delta v^m_{i,\varepsilon}
	\;\le\;
	T_m'(\omega_{i,\varepsilon})
	\left( 
	\Delta f_{i,\varepsilon}
	+ \eta \sum_{j\ne i} \Delta f_{j,\varepsilon}
	\right).
	\]
	Now, consider the differential inequality
	\[
	\partial_t v^m_{i,\varepsilon}
	- d_i \Delta v^m_{i,\varepsilon}
	\;\ge\;
	\Lambda^m_{i,\varepsilon}
	+
	\eta\Gamma^m_{i,\varepsilon}
	\quad \text{in the distributional space}\  \mathcal{D}'((\delta,T)\times\Omega),
	\]
	where
	\[
	\Lambda^m_{i,\varepsilon}
	=
	T_m'(\omega_{i,\varepsilon})
	\left(
	Q_{i,\varepsilon}
	+ \eta \sum_{j\ne i} Q_{j,\varepsilon}
	\right),
	\qquad
	\Gamma^{m}_{i,\varepsilon}
	=
	T_m'(\omega_{i,\varepsilon})
	\sum_{j\ne i} (d_j - d_i)\, \Delta f_{j,\varepsilon},
	\]
	and
	\[
	\omega_{i,\varepsilon}
	=
	f_{i,\varepsilon}
	+
	\eta \sum_{k\ne i} f_{k,\varepsilon}.
	\]
	\begin{defn}[Weak formulation of supersolution to \eqref{differential inequality weak form}]
		Let 
		\[
		\psi \in C_c^\infty([0,T)\times\Omega),
		\qquad \psi \ge 0 .
		\]
		The above inequality is satisfied in the weak sense if
		\begin{align}
			\label{eq:WF1}
			- \int_{\Omega} \psi(\delta,x)\, &v^{m}_{i,\varepsilon}(\delta,x)\, dx
			+\int_{\delta}^{T} \int_{\Omega}
			(-\partial_t \psi)\, v^{m}_{i,\varepsilon}\, dx\, dt
			\nonumber+\, d_i \int_{\delta}^{T} \int_{\Omega}
			\nabla \psi \cdot \nabla v^{m}_{i,\varepsilon}\, dx\, dt
			\nonumber\\
			&\ge
			\int_{\delta}^{T} \int_{\Omega}
			\psi\, \Lambda^{m}_{i,\varepsilon}\, dx\, dt
			-\eta \sum_{j\ne i} (d_j - d_i)
			\int_{\delta}^{T} \int_{\Omega}
			T_m'(\omega_{i,\varepsilon})
			\,\nabla \psi \cdot \nabla f_{j,\varepsilon}
			\, dx\, dt
			\nonumber\\
			+\eta& \sum_{j\ne i} (d_j - d_i)
			\int_{\delta}^{T} \int_{\Omega}
			\psi\, T_m''(\omega_{i,\varepsilon})
			\left(
			\nabla f_{i,\varepsilon}
			+
			\eta \sum_{k\ne i} \nabla f_{k,\varepsilon}
			\right)
			\cdot \nabla f_{j,\varepsilon}
			\, dx\, dt .
		\end{align}
	\end{defn}
	\medskip
	
	\subsection{Passing to the limit \texorpdfstring{$\delta \to 0$}{delta to zero}}

	Let $m>0$ and $\varepsilon>0$ be fixed. We justify the passage to the limit
	$\delta\to0$ in the weak formulation \eqref{eq:WF1}, term by term. We start with the following two lemmas.
	\begin{lem}[Truncation energy estimate]\label{lem:trunc-energy}
		Let $d>0$, let $\Theta \in L^{1}(Q_T)$, and let $\mathcal{F}_{\rm{in}} \in L^{1}(\Omega)$.
		Let $\mathcal{F}$ be the solution of the system
		\begin{equation}\label{eq:trunc-heat}
			\begin{cases}
				\partial_t \mathcal{F} - d \Delta \mathcal{F} = \Theta & \text{in } (0,T)\times \Omega \\[2mm]
				\nabla \mathcal{F} \cdot \nu = 0, & \text{on } (0,T)\times\partial\Omega \\[2mm]
				\mathcal{F}(0,\cdot) = \mathcal{F}_{\rm{in}}, & \text{in } \Omega.
			\end{cases}
		\end{equation}
		Then, for every $M>0$, the following estimate holds:
		\begin{equation}\label{eq:trunc-energy}
			d \int_{\{|\mathcal{F}|\le M\}} |\nabla w|^2
			\le
			M\left(
			\int_{(0,T)\times\Omega} |\Theta|
			+
			\int_{\Omega} |\mathcal{F}_{\rm{in}}|
			\right).
		\end{equation}
	\end{lem}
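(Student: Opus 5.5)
The idea is to test the equation against the truncation $T^{M}(\mathcal F)$, where $T^M(\sigma)=\max(-M,\min(\sigma,M))$; here $w$ in \eqref{eq:trunc-energy} stands for $\mathcal F$. We work first with smooth data, $\Theta$ and $\mathcal F_{\rm in}$ regular, so that $\mathcal F$ is a classical solution and every manipulation is justified. The general case $\Theta\in L^1(Q_T)$, $\mathcal F_{\rm in}\in L^1(\Omega)$ then follows by density, using the continuity of the solution map in Lemma~\ref{lem:compactness}. Because $T^M$ is only Lipschitz, one can instead take a smooth approximation of it; the exact truncation is harmless anyway, since it is Lipschitz and the chain rule in $W^{1,1}$ applies.

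Introduce the primitive $j_M(\sigma)=\int_0^\sigma T^M(s)\,ds$. It satisfies $0\le j_M(\sigma)\le M|\sigma|$ and $j_M'=T^M$. Multiply the equation by $T^M(\mathcal F)$ and integrate over $(0,t)\times\Omega$. The Neumann condition removes the boundary term, and $\nabla T^M(\mathcal F)=\mathbf 1_{\{|\mathcal F|<M\}}\nabla\mathcal F$ almost everywhere. This yields
\[
\int_\Omega j_M(\mathcal F(t))\,dx + d\int_0^t\!\!\int_{\{|\mathcal F|\le M\}}|\nabla\mathcal F|^2
=\int_\Omega j_M(\mathcal F_{\rm in})\,dx+\int_0^t\!\!\int_\Omega \Theta\, T^M(\mathcal F).
\]
(The level set $\{|\mathcal F|=M\}$ contributes nothing, because $\nabla\mathcal F=0$ a.e. on it.) Now drop the nonnegative first term, use $|T^M|\le M$ and $j_M(\mathcal F_{\rm in})\le M|\mathcal F_{\rm in}|$, and set $t=T$. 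This gives exactly \eqref{eq:trunc-energy}.

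The main delicate point is the approximation step for $L^1$ data. Take smooth $\Theta_k\to\Theta$ in $L^1(Q_T)$ and $\mathcal F_{{\rm in},k}\to\mathcal F_{\rm in}$ in $L^1(\Omega)$. The corresponding $\mathcal F_k$ converge to $\mathcal F$ in $L^1((0,T);W^{1,1}(\Omega))$, by $L^1$ contraction together with the compactness of Lemma~\ref{lem:compactness}; along a subsequence the convergence also holds a.e., for both $\mathcal F_k$ and $\nabla\mathcal F_k$. Since $\mathbf 1_{\{|\mathcal F|<M\}}\le\liminf_k \mathbf 1_{\{|\mathcal F_k|<M\}}$ pointwise, Fatou's lemma gives
\[
\int_{\{|\mathcal F|<M\}}|\nabla \mathcal F|^2\le\liminf_k\int_{\{|\mathcal F_k|\le M\}}|\nabla\mathcal F_k|^2 .
\]
The right-hand sides of the smooth estimates converge, so the estimate passes to the limit. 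Finally, on $\{|\mathcal F|=M\}$ we again have $\nabla\mathcal F=0$ a.e., since $\mathcal F\in L^1(W^{1,1})$ and Stampacchia's theorem applies on level sets. The full set $\{|\mathcal F|\le M\}$ can therefore be used, which completes the argument.
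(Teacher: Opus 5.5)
Your proof is correct, and it follows the argument of Pierre's survey, which the paper cites in place of a proof. You test the equation with the truncation $T^M(\mathcal F)$, use the primitive $j_M$ with $0\le j_M(\sigma)\le M|\sigma|$, drop $\int_\Omega j_M(\mathcal F(T))$, and bound the remaining terms by $M\|\Theta\|_{L^1}$ and $M\|\mathcal F_{\rm in}\|_{L^1}$. Your approximation step for $L^1$ data (continuity of the solution map, Fatou on $\{|\mathcal F|<M\}$, Stampacchia on $\{|\mathcal F|=M\}$) soundly justifies the computation, and you correctly read the $\nabla w$ in the statement as $\nabla\mathcal F$.
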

	The proof of the lemma can be found in \cite{Pierre2010}.
	\begin{lem}[] \label{Root bound on NR operator}
		Let $T > 0$. Let $Q_{i,\varepsilon}$ denote the source term in \eqref{eq:RNFE} for all
		$i \in \mathbb{N}$ and for all $\varepsilon > 0$ on $(0,T)\times\Omega$. Furthermore, let the initial condition and diffusion coefficients satisfy
		\[
		\sum_{i=1}^{\infty} i f_i^{\mathrm{in}} \in L^2(\Omega),
		\qquad
		\sup_{i\in\mathbb{N}^*} d_i < +\infty.
		\] 
		Then the following estimate holds:
		\begin{align*}
			\sum_{i=1}^\infty \sqrt{\frac{\left\|Q_{i,\varepsilon}\right\|_{L^{1}((0,T)\times \Omega)}}{d_i}} <C,
		\end{align*}
		where the constant $C>0$ is independent of $\varepsilon$. 
	\end{lem}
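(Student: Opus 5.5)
The plan is to mimic the proof of Lemma~\ref{bound on NR operator}, but to take square roots term by term before summing. This way the summability hypothesis (A4) of Assumption~\ref{strong assumption}, which involves $\sqrt{b^i_{j,k}a_{j,k}}/\sqrt{kj\,d_id_j}$, is exactly what appears. Set
\[
X_{j,k} := \big\| (j f_{j,\varepsilon})(k d_k f_{k,\varepsilon}) \big\|_{L^1((0,T)\times\Omega)} .
\]
Since the denominator in \eqref{eq:RNFEoperator} is at least $1$, we get for each $i$
\[
\frac{\|Q_{i,\varepsilon}\|_{L^1}}{d_i} \le \frac12\sum_{j\ge i+1}\sum_{k=1}^{j-1}\frac{b^i_{j-k,k}a_{j-k,k}}{(j-k)k\,d_kd_i}\,X_{j-k,k} + \sum_{j\ge1}\frac{a_{i,j}}{ij\,d_jd_i}\,X_{i,j}.
\]

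The first key step is to apply the elementary inequality $\sqrt{\sum_\ell x_\ell}\le\sum_\ell\sqrt{x_\ell}$ for nonnegative $x_\ell$. This gives
\[
\sqrt{\frac{\|Q_{i,\varepsilon}\|_{L^1}}{d_i}} \le \sum_{j\ge i+1}\sum_{k=1}^{j-1}\frac{\sqrt{b^i_{j-k,k}a_{j-k,k}}}{\sqrt{(j-k)k\,d_kd_i}}\sqrt{X_{j-k,k}} + \sum_{j\ge1}\frac{\sqrt{a_{i,j}}}{\sqrt{ij\,d_jd_i}}\sqrt{X_{i,j}} .
\]

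The second key step is a uniform bound on $X_{j,k}$ from Proposition~\ref{prop:dual_estimate}. Each $f_{\ell,\varepsilon}$ is nonnegative, so the product $(jf_{j,\varepsilon})(kd_kf_{k,\varepsilon})$ is pointwise dominated by $\big(\sum_\ell \ell f_{\ell,\varepsilon}\big)\big(\sum_\ell \ell d_\ell f_{\ell,\varepsilon}\big)$. Hence
\[
X_{j,k}\lesssim \Big(\sup_\ell d_\ell\Big)\Big\|\sum_\ell \ell f_\ell^{\rm in}\Big\|_{L^2(\Omega)}^2 =: C_0 ,
\]
with $C_0$ independent of $j$, $k$ and $\varepsilon$. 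It follows that $\sqrt{X_{j,k}}\le\sqrt{C_0}$.

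The last step is to sum over $i$ and reindex the gain term as in Lemma~\ref{bound on NR operator}. Write $(j-k,k)\mapsto(j,k)$ with $1\le i\le j+k-1$, using \eqref{zero value of b}. This gives
\[
\sum_i\sqrt{\frac{\|Q_{i,\varepsilon}\|_{L^1}}{d_i}}\le\sqrt{C_0}\Big(\sum_{j,k}\sum_{i=1}^{j+k-1}\frac{\sqrt{b^i_{j,k}a_{j,k}}}{\sqrt{kj\,d_id_k}}+\sum_{i,j}\frac{\sqrt{a_{i,j}}}{\sqrt{ij\,d_id_j}}\Big).
\]
Up to the symmetry $a_{j,k}=a_{k,j}$, $b^i_{j,k}=b^i_{k,j}$ (used to swap $d_k$ and $d_j$ by relabelling), the right-hand side is the series in (A4), so it is finite.

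The main obstacle is the case of Assumption~\ref{weaker assumption}, where (A4) must be verified directly. With $a_{j,k}=(jk)^{-\lambda}$, $b^i_{j,k}=2/(j+k-1)$ and $d_i=i^{-\alpha}$, $\alpha\le1$, the gain sum is bounded by
\[
\sum_{j,k}(jk)^{-\lambda/2-1/2}k^{\alpha/2}(j+k)^{-1/2}\sum_{i\le j+k}i^{\alpha/2}\lesssim\sum_{j,k}(jk)^{-\lambda/2-1/2+\alpha/2}(j+k)^{(1+\alpha)/2}.
\]
Using $(j+k)\le 2jk$, the summand is at most $(jk)^{-\lambda/2+\alpha}$, and the sum converges because $\lambda\ge4>2+2\alpha$. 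The loss sum is easier: it is bounded by $\sum_{i,j}(ij)^{-\lambda/2-1/2+\alpha/2}$. I expect only this routine exponent bookkeeping to need care.
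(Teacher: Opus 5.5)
Under Assumption~\ref{strong assumption} your argument is the paper's proof. Both bound each product integral via Proposition~\ref{prop:dual_estimate}, take square roots through the sums, reindex the gain term and invoke (A4). The gap is in the extra step the paper never carries out, namely checking (A4) under Assumption~\ref{weaker assumption}.

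Your final claim $\lambda\ge 4>2+2\alpha$ is false at the admissible endpoint $\lambda=4$, $\alpha=1$. This is not just a loss from the crude bound $j+k\le 2jk$: the series in (A4) genuinely diverges there. Take $j=1$ in your labelling, so $b^i_{1,k}=2/k$ for $i\le k$, $a_{1,k}=k^{-4}$ and $d_i=i^{-1}$. Then
\[
\sum_{i=1}^{k}\frac{\sqrt{b^i_{1,k}\,a_{1,k}}}{\sqrt{k\,d_i d_k}}=\sqrt2\,k^{-5/2}\sum_{i=1}^{k} i^{1/2}\ \ge\ \tfrac{2\sqrt2}{3}\,k^{-1},
\]
and $\sum_k k^{-1}=\infty$. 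So for $(\lambda,\alpha)=(4,1)$ the route ``each $X_{j,k}\le C_0$ plus (A4)'' proves nothing. The paper's proof, which only cites Assumption~\ref{strong assumption}, has the same blind spot. Your check is valid for all other admissible $(\lambda,\alpha)$, i.e.\ whenever $\lambda>2+2\alpha$.

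The repair is to use more of Proposition~\ref{prop:dual_estimate} than a bound on each pair separately. By Tonelli and nonnegativity,
\[
\sum_{j,k}X_{j,k}=\int_0^T\!\!\int_\Omega\Big(\sum_j jf_{j,\varepsilon}\Big)\Big(\sum_k kd_kf_{k,\varepsilon}\Big)\lesssim C_0 .
\]
After your subadditivity step and summation over $i$, the gain contribution is $\sum_{j,k}A_{j,k}\sqrt{X_{j,k}}$ with $A_{j,k}=\sum_{i=1}^{j+k-1}\sqrt{b^i_{j,k}a_{j,k}}/\sqrt{jk\,d_id_k}$. Cauchy--Schwarz bounds this by $\big(\sum_{j,k}A_{j,k}^2\big)^{1/2}C_0^{1/2}$. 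Likewise the loss contribution is at most $\big(\sum_{i,j}a_{i,j}/(ij\,d_id_j)\big)^{1/2}C_0^{1/2}$.

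Thus $\ell^2$-summability of the coefficients suffices, which is weaker than the $\ell^1$ condition (A4). In particular the Assumption~\ref{strong assumption} case is still covered, since bounded $\ell^1$ sequences are in $\ell^2$. Under Assumption~\ref{weaker assumption} one has $A_{j,k}\le\sqrt2\,(jk)^{-(\lambda+1)/2}k^{\alpha/2}(j+k)^{(1+\alpha)/2}$, hence $A_{j,k}^2\lesssim (jk)^{2\alpha-\lambda}$. This is summable as soon as $\lambda>1+2\alpha$, which covers every $\lambda\ge4$, $\alpha\in[0,1]$, including the endpoint. The loss coefficients are $a_{i,j}/(ij\,d_id_j)=(ij)^{-\lambda-1+\alpha}$, which is summable.
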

	\begin{proof}
		We use the structure of $Q_{i,\varepsilon}$ as in \eqref{eq:RNFE}. It yields
		\begin{align*}
			\int_{0}^{T} \int_{\Omega} |Q_{i,\varepsilon}| \, dx\, dt \lesssim&
			\sum_{j=i+1}^{\infty} \sum_{k=1}^{j-1}
			\frac{b^{\,i}_{\,j-k,k}\, a_{\,j-k,k}}{k(j-k)d_{\,j-k}}
			\int_{0}^{T} \int_{\Omega} d_{\,j-k}\,(j-k) f_{\,j-k,\varepsilon}\, kf_{\,k,\varepsilon}\, dx\, dt\\
			&+\;
			\sum_{j=1}^{\infty}
			\frac{a_{i j}}{i j d_{j}}
			\int_{0}^{T} \int_{\Omega} d_{j}\, if_{i,\varepsilon}\, j f_{j,\varepsilon}\, dx\, dt.
		\end{align*}
		Thanks to the Proposition \ref{prop:dual_estimate}, we have that 
		\[
		\left( \int_{0}^{T} \int_{\Omega} |Q_{i,\varepsilon}| \, dx\, dt \right)^{1/2}
		\lesssim \left(
		\sum_{j=i+1}^{\infty} \sum_{k=1}^{j-1}
		\frac{\sqrt{\, b^{\,i}_{\,j-k,k}\, a_{\,j-k,k}}}{\sqrt{k (j-k)d_{\,j-k}}}
		\right)
		+
		\sum_{j=1}^{\infty}
		\frac{\sqrt{a_{i j}}}{\sqrt{i j d_j}} .
		\]
		
		\begin{align*}
			\sum_{i=1}^{\infty}\frac{1}{\sqrt{d_i}}
			\left( 
			\int_{0}^{T} \int_{\Omega} |Q_{i,\varepsilon}| \, dx\,dt 
			\right)^{1/2}
			&\lesssim 
			\sum_{i=1}^{\infty}
			\sum_{j=i+1}^{\infty}
			\sum_{k=1}^{j-1}
			\frac{\sqrt{\, b^{\,i}_{\,j-k,k}\, a_{\,j-k,k}}}{\sqrt{k(j-k)d_id_{\,j-k}}}
			+
			\sum_{i=1}^{\infty}\sum_{j=1}^{\infty} 
			\frac{\sqrt{a_{i j}}}{\sqrt{ij d_id_j}}\\
			&\lesssim	\sum_{j=1}^{\infty}
			\sum_{k=1}^{\infty}
			\sum_{i=1}^{j+k-1}
			\frac{\sqrt{\, b^{\,i}_{\,j,k}\, a_{\,j,k}}}{\sqrt{kjd_id_{\,j}}}
			+
			\sum_{i=1}^{\infty}\sum_{j=1}^{\infty} 
			\frac{\sqrt{a_{i j}}}{\sqrt{ijd_id_j}}.
		\end{align*}    
		Thanks to Assumption~\ref{strong assumption}, the right-hand side is finite. This concludes the proof.
	\end{proof}
	Using these two lemmas, we can analyze the time-limit $\delta \to 0$. More precisely, we have the following proposition.
	\begin{prop}\label{control by time parameter}
		Let $m>0$ and let $T_m$ is a smooth function as defined in \eqref{m_trunction}. Let $f_{i,\varepsilon}$ be a solution to~\eqref{eq:RNFE} for $i \in \mathbb{N}$ and
		$\varepsilon > 0$. Then, for every nonnegative test function
		$\psi \in C_c^\infty([0,T)\times\Omega)$, there exists a constant $C > 0$, independent of
		$\varepsilon$, $\eta$, and the index $i$, such that
		\begin{align}\label{weak formulation on time}
			\int_{\Omega}\psi(0,x)\,T_m(\omega_i^{\mathrm{in}})\,dx
			+&
			\int_{0}^{T}\!\!\int_{\Omega}
			(-\partial_t\psi)\,T_m(\omega_{i,\varepsilon})\,dx\,dt
			+
			d_i\int_{0}^{T}\!\!\int_{\Omega}
			\nabla\psi\cdot\nabla T_m(\omega_{i,\varepsilon})\,dx\,dt \nonumber
			\\
			&\qquad\ge
			\int_{0}^{T}\!\!\int_{\Omega}
			\psi\,\Lambda^{m}_{i,\varepsilon}\,dx\,dt
			-
			C\,m\,\eta^{1/2},
		\end{align}
		where
		\[
		\omega_{i,\varepsilon}
		=
		f_{i,\varepsilon}
		+
		\eta \sum_{j\ne i} f_{j,\varepsilon},
		\ \text{and} \ 
		\Lambda^{m}_{i,\varepsilon}
		=
		T_m'(\omega_{i,\varepsilon})
		\Bigl(
		Q_{i,\varepsilon}
		+
		\eta \sum_{j\ne i} Q_{j,\varepsilon}
		\Bigr).
		\]
	\end{prop}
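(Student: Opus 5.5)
Starting from the weak formulation \eqref{eq:WF1} on $(\delta,T)$, I will estimate the two $\eta$-terms on the right-hand side uniformly in $\delta$ and $\varepsilon$, and then let $\delta\to0$ in the remaining terms. The initial term converges to $T_m(\omega_i^{\rm in})$, and the nonnegative term $-\int\psi(\delta)v^m$ on the left is handled by the sign convention.

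\textbf{Step 1: the gradient error terms.} Both error terms are supported where $T_m'(\omega_{i,\varepsilon})\neq0$, hence in $\{\omega_{i,\varepsilon}\le m\}$. On that set $f_{j,\varepsilon}\le m/\eta$ for every $j\ne i$, and $f_{i,\varepsilon}\le m$.

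For the first term I will use Cauchy--Schwarz with the weight $\sqrt{d_j}$:
\[
\eta\sum_{j\ne i}|d_j-d_i|\int_{\{f_{j,\varepsilon}\le m/\eta\}}|\nabla\psi||\nabla f_{j,\varepsilon}|
\lesssim \eta\sum_j \frac{\sup_k d_k}{\sqrt{d_j}}\Big(d_j\int_{\{f_{j,\varepsilon}\le m/\eta\}}|\nabla f_{j,\varepsilon}|^2\Big)^{1/2}.
\]
Lemma~\ref{lem:trunc-energy} with $M=m/\eta$ bounds the bracket by $\frac m\eta\big(\|Q_{j,\varepsilon}\|_{L^1}+\|f_j^{\rm in}\|_{L^1}\big)$. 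The term therefore becomes
\[
\sqrt{m\eta}\sum_j\frac{1}{\sqrt{d_j}}\Big(\|Q_{j,\varepsilon}\|_{L^1}^{1/2}+\|f_j^{\rm in}\|_{L^1}^{1/2}\Big).
\]
This sum is bounded uniformly in $\varepsilon$: the first part by Lemma~\ref{Root bound on NR operator}, and the second part by the hypothesis $\sum_j d_j^{-1/2}\|f_j^{\rm in}\|_{L^1}^{1/2}<\infty$. This is where the technical initial-data assumption is used (the display labelled \verb|use_small_initial_mas|).

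\textbf{Step 2: the $T_m''$ term.} Since $|T_m''|\le1$ and $T_m''$ is supported in $\{m-1\le\omega_{i,\varepsilon}\le m\}$, I will bound $|\nabla f_{i,\varepsilon}+\eta\sum_k\nabla f_{k,\varepsilon}|\,|\nabla f_{j,\varepsilon}|$ by Young's inequality. This reduces the term to truncated Dirichlet integrals of each $f_{k,\varepsilon}$ on $\{f_{k,\varepsilon}\le m/\eta\}$, plus the one for $f_{i,\varepsilon}$ on $\{f_{i,\varepsilon}\le m\}$, weighted by $\eta$ and $\eta^2$. Treated as in Step 1, the products $\eta\cdot\eta\cdot(m/\eta)$ again produce a factor of the form $m\eta^{1/2}$ times the sums from Lemma~\ref{Root bound on NR operator}, once $\eta\le1$.

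This is the step I expect to be the main obstacle. A naive product of Dirichlet integrals gives $\sum_j\sum_k$ double sums, and these must be kept in square-root form, i.e. $\big(\sum_j d_j^{-1/2}(\cdot)^{1/2}\big)^2$. The weights must be arranged so that only $\eta^{1/2}$ is lost; a cruder bound would lose $\eta^{-1}$. If needed, I would use the alternative estimate $\int T_m''|\nabla\omega|^2\ge -\,$(the energy of $\omega$ on $\{\omega\le m\}$), computed from the equation for $\omega_{i,\varepsilon}$ itself.

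\textbf{Step 3: $\delta\to0$.} Fix $\varepsilon$. Then $f_{i,\varepsilon}$ is bounded by $1/\varepsilon$, the series defining $\omega_{i,\varepsilon}$ converges in $C([0,T];L^1)$ by mass conservation, and $T_m$ is Lipschitz. Hence $v^m_{i,\varepsilon}(\delta)\to T_m(\omega_i^{\rm in})$ in $L^1(\Omega)$, and the $(\delta,T)$ integrals converge to $(0,T)$ integrals by dominated convergence. The integrability of $\nabla v^m$ and of $\Lambda^m$ comes from Lemma~\ref{bound on NR operator} and $|T_m'|\le1$. Collecting the bounds of Steps 1 and 2, all uniform in $\delta,\varepsilon,i$, gives \eqref{weak formulation on time}.
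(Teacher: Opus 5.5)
Your proposal is correct and is essentially the paper's argument. Both bound the two $\eta$-error terms of \eqref{eq:WF1} uniformly in $\delta$, $\varepsilon$, $i$ using Lemma~\ref{lem:trunc-energy} at level $m/\eta$ for $f_{j,\varepsilon}$ ($j\ne i$) and level $m$ for $f_{i,\varepsilon}$, pairwise Cauchy--Schwarz (this is what your ``square-root form'' amounts to; unweighted Young would make the $j$-sums diverge), Lemma~\ref{Root bound on NR operator}, and the hypothesis $\sum_j d_j^{-1/2}\|f_j^{\rm in}\|_{L^1}^{1/2}<\infty$; this gives $\eta\sqrt{m}\sqrt{m/\eta}+\eta^2(m/\eta)+\sqrt{m\eta}\lesssim m\eta^{1/2}$ for $\eta\le1\le m$, and then $\delta\to0$ follows by dominated convergence.

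Your only deviations are small improvements. First, the uniform-in-time tail bound $\sum_{j>M}\|f_{j,\varepsilon}(t)\|_{L^1}\le M^{-1}\int_\Omega\sum_j jf_j^{\rm in}$ from mass conservation, together with the $1$-Lipschitz property of $T_m$, gives $v^m_{i,\varepsilon}(\delta)\to T_m(\omega_i^{\rm in})$ in $L^1(\Omega)$ more directly than the paper's Green-function/tail-of-$Q$ argument and its detour through pointwise a.e.\ convergence. Second, you make explicit that the boundary term $-\int_\Omega\psi(\delta)v^m_{i,\varepsilon}(\delta)$ of \eqref{eq:WF1} can be replaced by the larger $+\int_\Omega\psi(\delta)v^m_{i,\varepsilon}(\delta)$, which is what produces the $+$ sign in \eqref{weak formulation on time}.
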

	\begin{proof}
		We estimate the limit $\delta\to 0$ term by term. 
		\newline
		\textbf{First term (initial condition).}
		For each $i\ge1$, the mild solution $f_{i,\varepsilon}$ to \eqref{eq:RNFE} satisfies
		\[
		f_{i,\varepsilon}(t) \longrightarrow f_i^{\mathrm{in}}
		\qquad \text{in } L^1(\Omega) \text{ as } t\to0 .
		\]
		Next, we derive that $\displaystyle{\sum\limits_{i=1}^{\infty}f_{i,\varepsilon}(t) \longrightarrow \sum\limits_{i=1}^{\infty}f_i^{\mathrm{in}}}$ in  $L^1(\Omega)$ as  $t\to0 $. The solution $f_{i,\varepsilon}$ to \eqref{eq:RNFE} can be expressed as:
		\begin{align}\label{representation regularization}
			f_{i,\varepsilon}(t,x)= \int_{\Omega}G_{d_i}(t,0,x,y)f_i^{\rm{in}}(y)\, dy+\int_{0}^{t}\int_{\Omega} G_{d
				_i}(t,s,x,y)Q_{i,\varepsilon}(s,y) \, ds\, dy, \ \ \forall \, i\in\mathbb{N},
		\end{align}
		where $G_{d_i}(t,s,x,y)$ is the Neumann Green function corresponding to the operator $\partial_t-d_i\Delta$. Hence, we obtain the following $L^1(\Omega)$ integral estimate
		\begin{align*}
			\sum_{i>M} \|f_{i,\varepsilon}(t,x)\|_{L^1(\Omega)} \leq &\sum_{i>M} \| f_i^{\rm{in}}\|_{L^1(\Omega)}+ \sum_{i>M} \left\| \int_{0}^{t}\int_{\Omega} G(t,s,x,y)Q_{i,\varepsilon}(s,y) \, ds\, dy\right\|_{L^1(\Omega)}\\
			\leq & \sum_{i>M} \| f_i^{\rm{in}}\|_{L^1(\Omega)}+ \sum_{i>M} \int_{0}^{t} \| Q_{i,\varepsilon}(s,\cdot)\|_{L^1(\Omega)} \, ds\\
			\leq &\sum_{i>M} \| f_i^{\rm{in}}\|_{L^1(\Omega)}+\sum_{i>M} \| Q_{i,\varepsilon}\|_{L^1((0,T)\times\Omega)} \leq \delta_M\to 0 \ \mbox{as}\ M\to \infty.
		\end{align*}
		Here, thanks to Lemma~\eqref{bound on NR operator}, we obtain the last line. Now
		\begin{align*}
			&\limsup\limits_{t\to 0}\left\| \sum_{i=1}^{\infty} f_{i,\varepsilon}(t,x)- \sum_{i=1}^{\infty} f_{i}^{\rm{in}}(x)\right\|_{L^1(\Omega)} \\
			\leq & \limsup\limits_{t\to 0}\left\| \sum_{i=1}^{M} f_{i,\varepsilon}(t,x)- \sum_{i=1}^{M} f_{i}^{\rm{in}}(x)\right\|_{L^1(\Omega)}\\
			&+ \limsup\limits_{t\to 0}\sum_{i>M} \|f_{i,\varepsilon}(t,x)\|_{L^1(\Omega)} +\limsup\limits_{t\to 0}\sum_{i>M} \| f_i^{\rm{in}}\|_{L^1(\Omega)}\\
			\leq & 2\delta_M \to 0 \ \ \mbox{as} \ M\to +\infty. 
		\end{align*}
		Hence $\displaystyle{\sum_{i=1}^{\infty} f_{i,\varepsilon}(t,x)\to \sum_{i=1}^{\infty} f_{i}^{\rm{in}}(x) }$ in $L^1(\Omega)$. This further implies $\displaystyle{\sum_{i=1}^{\infty} f_{i,\varepsilon}(t,x)\to \sum_{i=1}^{\infty} f_{i}^{\rm{in}}(x) }$ for all $x\in \Omega$ pointwise a.e.
		\newline
		Since $T_m$ is continuous and bounded and $\psi$ is smooth, we have
		the following pointwise a.e. convergence
		\[
		v^m_{i,\varepsilon}(t,x)
		\longrightarrow
		T_m\!\left(
		f_i^{\mathrm{in}}(x)
		+ \eta \sum_{j\ne i} f_j^{\mathrm{in}}(x)
		\right)
		\quad \text{as } t\to0 .
		\]
		Moreover,
		\(
		0 \le v^m_{i,\varepsilon} \le m
		\),
		hence employing dominated convergence theorem, we obtain that
		\[
		\lim_{\delta\to0}
		\int_{\Omega} \psi(\delta,x)\, v^m_{i,\varepsilon}(\delta,x)\,dx
		=
		\int_{\Omega}
		\psi(0,x)\,
		T_m\!\left(
		f_i^{\mathrm{in}}(x)
		+ \eta \sum_{j\ne i} f_j^{\mathrm{in}}(x)
		\right) dx .
		\]
		
		\medskip
		\noindent\textbf{Second term (time derivative).}
		By construction of the truncation,
		\[
		0 \le v^m_{i,\varepsilon} \le m
		\quad \text{a.e. in } (0,T)\times\Omega,
		\]
		and therefore
		\(
		v^m_{i,\varepsilon} \in L^1((0,T)\times\Omega).
		\)
		Since $\partial_t\psi \in L^\infty((0,T)\times\Omega)$, we can pass the
		limit by dominated convergence theorem:
		\[
		\lim_{\delta\to0}
		\int_{\delta}^{T}\!\!\int_{\Omega}
		(\partial_t\psi)\, v^m_{i,\varepsilon}\,dx\,dt
		=
		\int_{0}^{T}\!\!\int_{\Omega}
		(\partial_t\psi)\, v^m_{i,\varepsilon}\,dx\,dt .
		\]
		
		\medskip
		\noindent\textbf{Third term (diffusion).}
		Using the chain rule,
		\[
		\nabla v^m_{i,\varepsilon}
		=
		T_m'(\omega_{i,\varepsilon})\,\nabla\omega_{i,\varepsilon},
		\qquad
		0 \le T_m' \le 1,
		\]
		where
		\(
		\omega_{i,\varepsilon}
		=
		f_{i,\varepsilon}
		+ \eta \sum_{j\ne i} f_{j,\varepsilon}.
		\) For each $i$, the solution $f_{i\varepsilon}$ to \eqref{eq:RNFE} can be expressed as (see \eqref{representation regularization}):
		\[
		f_{i,\varepsilon}(t,x)= \int_{\Omega}G_{d_i}(t,0,x,y)f_i^{\rm{in}}(y)\, dy+\int_{0}^{t}\int_{\Omega} G_{d
			_i}(t,s,x,y)Q_{i,\varepsilon}(s,y) \, ds\, dy, \ \ \forall \, i\in\mathbb{N}.
		\]
		Following step 5 in the Proposition~\eqref{prop:infinite_mass}, we can derive the following integral estimate of the gradient
		\begin{align}\label{gradient estimate:regularization}
			\|\nabla f_{i,\varepsilon}\|_{L^1((0,T)\times\Omega)}
			\le
			\frac{C\sqrt{T}}{\sqrt{d_i}}
			\left(
			\|f_i^{\mathrm{in}}\|_{L^1(\Omega)}
			+
			\|Q_{i,\varepsilon}\|_{L^1((0,T)\times\Omega)}
			\right).
		\end{align}
		Thanks to Lemma \ref{bound on NR operator}, we conclude the following estimate
		\begin{align*}
			\int_0^T\!\!\int_\Omega
			|\nabla v^m_{i,\varepsilon}|
			\le &
			C\sqrt{T}
			\left(
			\frac{1}{\sqrt{d_i}}\|f_i^{\mathrm{in}}\|_{L^1(\Omega)}
			+
			\sum_{j\ne i}\frac{\eta}{\sqrt{d_j}}\|f_j^{\mathrm{in}}\|_{L^1(\Omega)}\right)\\
			&
			+
			C\sqrt{T}\sum_{j}\frac{\eta}{\sqrt{d_j}}\|Q_{j,\varepsilon}\|_{L^1((0,T)\times\Omega)}<+\infty,
		\end{align*}
		which implies
		\[
		\lim_{\delta\to0}
		d_i\int_{\delta}^{T}\!\!\int_\Omega
		\nabla\psi\cdot\nabla v^m_{i,\varepsilon}\,dx\,dt
		=
		d_i\int_{0}^{T}\!\!\int_\Omega
		\nabla\psi\cdot\nabla v^m_{i,\varepsilon}\,dx\,dt .
		\]
		
		\medskip
		\noindent\textbf{Fourth term.}
		Recall that
		\[
		\Lambda^m_{i,\varepsilon}
		=
		T_m'(\omega_{i,\varepsilon})
		\Bigl(
		Q_{i,\varepsilon}
		+
		\eta \sum_{j\ne i} Q_{j,\varepsilon}
		\Bigr),
		\qquad
		0 \le T_m' \le 1 .
		\]
		Hence,
		\[
		|\Lambda^m_{i,\varepsilon}|
		\le
		|Q_{i,\varepsilon}|
		+
		\eta \sum_{j\ne i} |Q_{j,\varepsilon}|
		\in L^1((0,T)\times\Omega).
		\]
		Dominated convergence theorem yields
		\[
		\lim_{\delta\to0}
		\int_{\delta}^{T}\!\!\int_\Omega
		\psi\,\Lambda^m_{i,\varepsilon}\,dx\,dt
		=
		\int_{0}^{T}\!\!\int_\Omega
		\psi\,\Lambda^m_{i,\varepsilon}\,dx\,dt .
		\]
		To deal with the fifth and sixth terms, Lemma \ref{lem:trunc-energy} and Lemma \ref{Root bound on NR operator} will be very useful.
		\medskip
		\noindent\textbf{Fifth and sixth terms.}
		Recall that
		\[
		\Gamma^{m}_{i,\varepsilon}
		=
		T_m'(\omega_{i,\varepsilon})
		\sum_{j\ne i} (d_j-d_i)\,\Delta f_{j,\varepsilon},
		\qquad
		\omega_{i,\varepsilon}
		=
		f_{i,\varepsilon}
		+
		\eta \sum_{k\ne i} f_{k,\varepsilon}.
		\]
		In the weak formulation \eqref{eq:WF1}, these terms already appear after
		integration by parts, namely,
		\begin{align*}
			\int_{\delta}^{T}\!\!\int_{\Omega}
			\psi\,\Gamma^{m}_{i,\varepsilon}
			&=
			-\sum_{j\ne i}(d_j-d_i)
			\int_{\delta}^{T}\!\!\int_{\Omega}
			T_m'(\omega_{i,\varepsilon})\,
			\nabla\psi\cdot\nabla f_{j,\varepsilon}\,dx\,dt
			\\
			&\quad
			+
			\sum_{j\ne i}(d_j-d_i)
			\int_{\delta}^{T}\!\!\int_{\Omega}
			\psi\,
			T_m''(\omega_{i,\varepsilon})
			\Bigl(
			\nabla f_{i,\varepsilon}
			+
			\eta \sum_{k\ne i}\nabla f_{k,\varepsilon}
			\Bigr)\cdot\nabla f_{j,\varepsilon}\,dx\,dt .
		\end{align*}
		\medskip
		\textbf{Estimate of the term involving \(T_m''\).}
		Since \(T_m''\) is supported in \(\{0\le \omega_{i,\varepsilon}\le m\}\) and
		\(|T_m''|\le 1\), we obtain
		\begin{align*}
			&\sum_{j=1}^{\infty}
			\left|
			\int_{\delta}^{T}\!\!\int_{\Omega}
			\psi\,
			T_m''(\omega_{i,\varepsilon})
			\Bigl(
			\nabla f_{i,\varepsilon}
			+
			\eta \sum_{k\ne i}\nabla f_{k,\varepsilon}
			\Bigr)\cdot\nabla f_{j,\varepsilon}\,dx\,dt
			\right|
			\\
			&\qquad
			\le
			C\sum_{j=1}^{\infty}
			\int_{\delta}^{T}\!\!\int_{\Omega_{ij}}
			\Bigl(
			|\nabla f_{i,\varepsilon}|\,|\nabla f_{j,\varepsilon}|
			+
			\eta\sum_{k\ne i}
			|\nabla f_{k,\varepsilon}|\,|\nabla f_{j,\varepsilon}|
			\Bigr)\,dx\,dt,
		\end{align*}
		where
		\[
		\Omega_{ij}
		=
		\Bigl\{
		(t,x)\in(\delta,T)\times\Omega:
		|f_{i,\varepsilon}|\le m,\;
		|f_{j,\varepsilon}|\le m/\eta,\;
		j\ne i
		\Bigr\}.
		\]
		Employing Cauchy-Schwarz inequality, we have that
		\begin{align*}
			&\le
			C\sum_{j=1}^{\infty}
			\left(
			\int_{\delta}^{T}\!\!\int_{\Omega_{ij}}
			|\nabla f_{i,\varepsilon}|^{2}
			\right)^{1/2}
			\left(
			\int_{\delta}^{T}\!\!\int_{\Omega_{ij}}
			|\nabla f_{j,\varepsilon}|^{2}
			\right)^{1/2}
			\\
			&\quad
			+
			C\eta
			\sum_{j=1}^{\infty}\sum_{k\ne i}
			\left(
			\int_{\delta}^{T}\!\!\int_{\Omega_{kj}}
			|\nabla f_{k,\varepsilon}|^{2}
			\right)^{1/2}
			\left(
			\int_{\delta}^{T}\!\!\int_{\Omega_{kj}}
			|\nabla f_{j,\varepsilon}|^{2}
			\right)^{1/2}.
		\end{align*}
		\medskip
		\textbf{Estimate of the gradient terms on truncation sets.}
		From the definition of \(\Omega_{ij}\),
		\[
		\int_{\Omega_{ij}} |\nabla f_{j,\varepsilon}|^{2}
		\le
		\int_{\{|f_{j,\varepsilon}|\le m/\eta\}}
		|\nabla f_{j,\varepsilon}|^{2}.
		\]
		Using Lemma \ref{lem:trunc-energy} to the triplet \( \big(\mathcal{F}=f_{j,\varepsilon}, \, M=m/\eta, \, \Theta=Q_{j,\varepsilon}, j\neq i\big)\) and to the triplet \( \big(w=f_{i,\varepsilon}, \, M=m, \, \Theta=Q_{i,\varepsilon}\big)\) respectively, we obtain
		\begin{align}\label{use_small_initial_mas}
			\left(
			\int_{\Omega_{ij}} |\nabla f_{j,\varepsilon}|^{2}
			\right)^{1/2}
			\le
			\left(\frac{m}{\eta d_i}\right)^{1/2}
			\left[
			\left(
			\int_{0}^{T}\!\!\int_{\Omega}|Q_{j,\varepsilon}|
			\right)^{1/2}
			+
			\left(
			\int_{\Omega} f_j^{\mathrm{in}}
			\right)^{1/2}
			\right],\\
			\left(
			\int_{\Omega_{ij}} |\nabla f_{i,\varepsilon}|^{2}
			\right)^{1/2}
			\le
			\left(\frac{m}{d_i}\right)^{1/2}
			\left[
			\left(
			\int_{0}^{T}\!\!\int_{\Omega}|Q_{i,\varepsilon}|
			\right)^{1/2}
			+
			\left(
			\int_{\Omega} f_i^{\mathrm{in}}
			\right)^{1/2}
			\right].
		\end{align}
		Using the estimates on the reaction terms \(Q_{j,\varepsilon}\) as in Lemma \ref{Root bound on NR operator}  and summing over the index \(j\), we finally obtain
		\[
		\left|
		\int_{\delta}^{T}\!\!\int_{\Omega}
		\psi\,
		T_m''(\omega_{i,\varepsilon})
		(\nabla f_{i,\varepsilon}
		+\eta\sum_{k\ne i}\nabla f_{k,\varepsilon})
		\cdot\nabla f_{j,\varepsilon}
		\,dx\,dt
		\right|
		\le
		C\,m\,\eta^{-1/2}.
		\]
		\textbf{Estimate of the term involving \(T_m'\nabla\psi\).}
		Since \(0\le T_m'\le1\) and \(\nabla\psi\in L^\infty\), a similar argument
		based on the $L^1$–integrability of \(\nabla f_{j,\varepsilon}\) yields
		\[
		\left|
		\sum_{j\ne i}(d_j-d_i)
		\int_{\delta}^{T}\!\!\int_{\Omega}
		T_m'(\omega_{i,\varepsilon})\,
		\nabla\psi\cdot\nabla f_{j,\varepsilon}\,dx\,dt
		\right|
		\le
		C\,m\,\eta^{-1/2}.
		\]
		\noindent\textbf{Final weak formulation.}
		Combining all the estimates above and letting $\delta \to 0$ in \eqref{eq:WF1}, we
		conclude that, for every nonnegative $\psi\in C_c^\infty([0,T)\times\Omega)$,
		\begin{align*}
			\int_{\Omega}\psi(0,x)\,T_m(\omega_i^{\mathrm{in}})\,dx
			+ &
			\int_{0}^{T}\!\!\int_{\Omega}
			(-\partial_t\psi)\,T_m(\omega_{i,\varepsilon})\,dx\,dt 
			+
			d_i\int_{0}^{T}\!\!\int_{\Omega}
			\nabla\psi\cdot\nabla T_m(\omega_{i,\varepsilon})\,dx\,dt
			\\
			&\qquad\ge
			\int_{0}^{T}\!\!\int_{\Omega}
			\psi\,\Lambda^{m}_{i,\varepsilon}\,dx\,dt
			-
			C\,m\,\eta^{1/2},
		\end{align*}
		where
		\[
		\omega_{i,\varepsilon}
		=
		f_{i,\varepsilon}
		+
		\eta \sum_{j\ne i} f_{j,\varepsilon},
		\ \text{and} \ 
		\Lambda^{m}_{i,\varepsilon}
		=
		T_m'(\omega_{i,\varepsilon})
		\Bigl(
		Q_{i,\varepsilon}
		+
		\eta \sum_{j\ne i} Q_{j,\varepsilon}
		\Bigr).
		\]
	\end{proof}
	
	\subsection{Passing to the limit \texorpdfstring{$\varepsilon \to 0$}{epsilon to zero}}

	We now pass to the limit $\varepsilon \to 0$ in the estimate \ref{weak formulation on time} as in Proposition \ref{control by time parameter}. Throughout this subsection, $m>0$ and $\eta>0$ are
	fixed.
	\begin{prop}\label{epsilon compactness in the inequation}
		Let $m>0$, and let $T_m$ be a smooth function as defined in \eqref{m_trunction}. Let $f_i$ 
		denote the $L^1((0,T); W^{1,1}(\Omega))$ limit of the sequence $\{f_{i,\varepsilon}\}$, 
		where $f_{i,\varepsilon}$ is a solution to the regularized nonlinear fragmentation system~\eqref{eq:RNFE} 
		for all $i \in \mathbb{N}$ and for all $\varepsilon > 0$. Further assume that
		\[
		\sum_{i=1}^{\infty} i f_i^{\mathrm{in}} \in L^2(\Omega),
		\ \text{and} \ 
		\sup_{i\in\mathbb{N}^*} d_i < +\infty.
		\]
		Then, for every nonnegative test function 
		$\psi \in C_c^\infty([0,T)\times\Omega)$, we have
		\begin{align}\label{control estimate on epsilon}
			\int_{\Omega} \psi(0,x)\,T_m(\omega_i^{\mathrm{in}})\,dx
			+&
			\int_{0}^{T}\!\!\int_{\Omega} (-\partial_t\psi)\,T_m(\omega_i)\,dx\,dt
			+
			d_i \int_{0}^{T}\!\!\int_{\Omega} \nabla\psi \, T_m'(\omega_i) \, \nabla\omega_i \,dx\,dt 
			\nonumber  \\
			&\ge
			\int_{0}^{T}\!\!\int_{\Omega} \psi\,\Lambda^m_i\,dx\,dt
			-
			C\,m^2(1+T)^2\,\eta^{1/2},
		\end{align}
		where
		\[
		\omega_i = f_i + \eta \sum_{j\ne i} f_j, 
		\ \text{and} \ 
		\Lambda^m_i = T_m'(\omega_i) \Bigl( Q_i + \eta \sum_{j\ne i} Q_j \Bigr).
		\]
		Here $Q_i$ is as defined in \eqref{NFE Operator}.
	\end{prop}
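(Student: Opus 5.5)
We pass to the limit $\varepsilon\to0$ in the inequality \eqref{weak formulation on time} of Proposition~\ref{control by time parameter}, term by term, along the subsequence given by Lemma~\ref{lem:compactness of sequence}.

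\textbf{Convergence of $\omega_{i,\varepsilon}$.} First we show that $\omega_{i,\varepsilon}\to\omega_i$ in $L^1((0,T);W^{1,1}(\Omega))$ and almost everywhere. For each fixed $j$, Lemma~\ref{lem:compactness of sequence} gives $f_{j,\varepsilon}\to f_j$ in $L^1((0,T);W^{1,1}(\Omega))$. By a diagonal extraction we may take one subsequence that works for all $j$ and also gives a.e.\ convergence. The tail $\eta\sum_{j>M}f_{j,\varepsilon}$ is controlled uniformly in $\varepsilon$ in two ways:
\begin{itemize}
\item in $L^1$, by the mass bound of Proposition~\ref{prop:infinite_mass} (tails of $\sum_j j\|f_{j,\varepsilon}\|_{L^1}$, split as in the proof of Proposition~\ref{control by time parameter});
\item in the gradient, by \eqref{gradient estimate:regularization}, which bounds $\sum_{j>M}\|\nabla f_{j,\varepsilon}\|_{L^1}$ by $\sum_{j>M}d_j^{-1/2}\bigl(\|f_j^{\rm in}\|_{L^1}+\|Q_{j,\varepsilon}\|_{L^1}\bigr)$. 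Since $d_j$ is bounded above, this tail goes to zero uniformly in $\varepsilon$ by Lemma~\ref{bound on NR operator}.
\end{itemize}

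\textbf{Linear terms on the left.} Since $T_m$ is bounded and continuous, dominated convergence passes the limit in the $\partial_t\psi$ term. For the diffusion term we write $\nabla T_m(\omega_{i,\varepsilon})=T_m'(\omega_{i,\varepsilon})\nabla\omega_{i,\varepsilon}$. Here $T_m'(\omega_{i,\varepsilon})\to T_m'(\omega_i)$ a.e. and is bounded by $1$, while $\nabla\omega_{i,\varepsilon}\to\nabla\omega_i$ strongly in $L^1$. The product therefore converges in $L^1$: split as $T_m'(\omega_{i,\varepsilon})(\nabla\omega_{i,\varepsilon}-\nabla\omega_i)+(T_m'(\omega_{i,\varepsilon})-T_m'(\omega_i))\nabla\omega_i$ and use dominated convergence on the second piece. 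The initial term does not depend on $\varepsilon$.

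\textbf{The source term $\Lambda^m_{i,\varepsilon}$ (main obstacle).} Here there is no uniform integrability in general, and the denominator $1+\varepsilon\sum_j c_jf_{j,\varepsilon}^2$ must be removed. The truncation is what helps. On the support of $T_m'(\omega_{i,\varepsilon})$ we have $f_{i,\varepsilon}\le m$ and $f_{j,\varepsilon}\le m/\eta$ for every $j\ne i$. Hence $\varepsilon\sum_jc_jf_{j,\varepsilon}^2\le\varepsilon(m/\eta)^2\sum_jc_j\to0$ uniformly there.

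\emph{The $Q_i$ part.} Every product $f_{j-k,\varepsilon}f_{k,\varepsilon}$ and $f_{i,\varepsilon}f_{j,\varepsilon}$ is bounded by $(m/\eta)^2$ on this support. Together with the summability of $a_{j,k}$ and $b\le B$, this gives a dominating function for $T_m'(\omega_{i,\varepsilon})Q_{i,\varepsilon}$. It also controls the tails in $j,k$ uniformly, so the series converge pointwise to $T_m'(\omega_i)Q_i$. Dominated convergence then applies.

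\emph{The part $\eta\sum_{j\ne i}T_m'(\omega_{i,\varepsilon})Q_{j,\varepsilon}$.} The same pointwise bound $f_{\cdot,\varepsilon}\le m/\eta$ gives $|Q_{j,\varepsilon}|\lesssim(m/\eta)^2\bigl(\sum_{k,l}b^j_{k,l}a_{k,l}+c_j\bigr)$, whose sum over $j$ is finite under Assumption~\ref{strong assumption}. So dominated convergence applies to the whole series, and $\int\psi\Lambda^m_{i,\varepsilon}\to\int\psi\Lambda^m_i$.

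If instead one only wants an inequality (which is enough here), Fatou's lemma can be used on the positive gain part and exact convergence on the bounded loss part. Any residual cross-terms produced by the truncation, for instance from the limits of the $T_m''$-type contributions, are absorbed into the error. This is why the error degrades from $Cm\eta^{1/2}$ to $Cm^2(1+T)^2\eta^{1/2}$: the extra factors of $m$ and $T$ come from bounding the $L^1$ norm of $\Lambda^m$-related contributions on the level sets via the bounds above.

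Finally, the constant $-Cm\eta^{1/2}$ from \eqref{weak formulation on time} is independent of $\varepsilon$ and carries over directly. This yields \eqref{control estimate on epsilon}.
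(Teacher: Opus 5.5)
Your proposal is correct and follows essentially the paper's proof. Like the paper, you pass to the limit $\varepsilon\to0$ term by term in \eqref{weak formulation on time}, use uniform tail control to get $\omega_{i,\varepsilon}\to\omega_i$ and $\nabla\omega_{i,\varepsilon}\to\nabla\omega_i$ in $L^1$, and use $f_{j,\varepsilon}\le m/\eta$ on the support of $T_m'(\omega_{i,\varepsilon})$ to send the denominator to $1$ and dominate $\Lambda^m_{i,\varepsilon}$. Your mass-conservation bound on the $L^1$ tails is simpler than the paper's, and your $m/\eta$ is more accurate than the paper's ``$f_{j,\varepsilon}\le m$''. For the gradient tail, a bounded sum is not enough: you need the $\varepsilon$-independent termwise majorant from the proof of Lemma~\ref{bound on NR operator}, i.e.\ \eqref{tail control source}.

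Your aside about residual $T_m''$ cross-terms degrading the error is spurious. Those terms were already absorbed in Proposition~\ref{control by time parameter}, so $-Cm\eta^{1/2}$ simply carries over, exactly as at the end of the paper's own proof, and it implies the stated error bound for $m\ge1$.
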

	\begin{proof}
		We pass the limit $\varepsilon\to 0$ term by term.
		\newline
		\textbf{Convergence of the truncated densities.}
		We consider
		\[
		\int_{0}^{T}\!\!\int_{\Omega}
		\left|
		f_{i,\varepsilon}
		+ \eta \sum_{j\ne i} f_{j,\varepsilon}
		-
		f_i
		- \eta \sum_{j\ne i} f_j
		\right| dx\,dt .
		\]
		Using the triangle inequality, we obtain
		\begin{align}\label{intermediate 1}
			\le
			\int_{0}^{T}\!\!\int_{\Omega}
			|f_{i,\varepsilon}-f_i|\,dx\,dt
			+
			\eta \sum_{j\ne i}
			\int_{0}^{T}\!\!\int_{\Omega}
			|f_{j,\varepsilon}-f_j|\,dx\,dt .
		\end{align}
		Fix $M\in\mathbb{N}$. Recall that the solution $f_{i,\varepsilon}$ can be expressed as \eqref{representation regularization}:
		\[
		f_{i,\varepsilon}(t,x)= \int_{\Omega}G(t,0,x,y)f_i^{\rm{in}}(y)\, dy+\int_{0}^{t}\int_{\Omega} G(t,s,x,y)Q_{i,\varepsilon}(s,y) \, ds\, dy, \ \ \forall \, i\in\mathbb{N}.
		\]
		Hence, we obtain the following $L^1((0,T)\times\Omega)$ integral estimate
		\begin{align*}
			\sum_{i>M} \|f_{i,\varepsilon}(t,x)\|_{L^1((0,T)\times\Omega)} \leq &\sum_{i>M} \int_0^T\| f_i^{\rm{in}}\|_{L^1(\Omega)}+ \sum_{i>M} \left\| \int_{0}^{t}\int_{\Omega} G(t,s,x,y)Q_{i,\varepsilon}(s,y) \, ds\, dy\right\|_{L^1((0,T)\times\Omega)}\\
			\leq &T\sum_{i>M} \| f_i^{\rm{in}}\|_{L^1(\Omega)}+\sum_{i>M} \| Q_{i,\varepsilon}\|_{L^1((0,T)\times\Omega)} \\\leq & T\sum_{i>M} \| f_i^{\rm{in}}\|_{L^1(\Omega)}+C\sum_{i>M}
			\sum_{j=i+1}^{\infty} \sum_{k=1}^{j-1}
			\frac{b_{j-k,k}^i a_{j-k,k}}{k(j-k)d_{k}d_i}+C\sum_{i>M}
			\sum_{j=1}^{\infty}
			\frac{a_{i,j}}{ijd_jd_i} \\
			\leq & \delta_M\to 0 \ \mbox{as}\ M\to \infty.
		\end{align*}
		Here the third line is due to Proposition \ref{prop:dual_estimate}. Since $f_{i,\varepsilon}\ge0$ and $f_{i,\varepsilon}\to f_i$ pointwise a.e.,
		Fatou's lemma yields
		\[
		\sum_{i>M} \|f_{i}\|_{L^1((0,T)\times\Omega)} \leq \sum_{i>M} \|f_{i,\varepsilon}\|_{L^1((0,T)\times\Omega)}\leq \delta_M\to 0 \ \mbox{as}\ M\to \infty.
		\]
		Splitting the sum in \eqref{intermediate 1} into a finite and an infinite part,
		we obtain
		\[
		\le
		\int_{0}^{T}\!\!\int_{\Omega}
		|f_{i,\varepsilon}-f_i|\,dx\,dt
		+
		\eta \sum_{j\ne i}^{M}
		\int_{0}^{T}\!\!\int_{\Omega}
		|f_{j,\varepsilon}-f_j|\,dx\,dt
		+
		\eta\,\delta_M,
		\]
		where $\delta_M\to0$ as $M\to\infty$, uniformly in $\varepsilon$, thanks to
		the uniform $L^1$ summability of $\{f_{j,\varepsilon}\}_{j\ge1}$.
		Since $f_{j,\varepsilon}\to f_j$ strongly in $L^1((0,T)\times\Omega)$ for
		each fixed $j$, we conclude that
		\[
		f_{i,\varepsilon}
		+ \eta \sum_{j\ne i} f_{j,\varepsilon}
		\;\longrightarrow\;
		f_i + \eta \sum_{j\ne i} f_j
		\quad \text{in } L^1((0,T)\times\Omega).
		\]
		In particular,
		\[
		\omega_{i,\varepsilon}
		\longrightarrow
		\omega_i
		\quad \text{a.e. and in } L^1((0,T)\times\Omega).
		\]
		
		\medskip
		\noindent\textbf{Limit of the second term (time derivative).}
		Since $T_m$ is continuous and bounded, we have
		\[
		T_m(\omega_{i,\varepsilon}) \to T_m(\omega_i)
		\quad \text{a.e. in } (0,T)\times\Omega,
		\qquad
		|T_m(\omega_{i,\varepsilon})|\le m .
		\]
		Therefore, by the Dominated Convergence Theorem,
		\[
		\lim_{\varepsilon\to0}
		\int_{0}^{T}\!\!\int_{\Omega}
		(-\partial_t\psi)\,T_m(\omega_{i,\varepsilon})\,dx\,dt
		=
		\int_{0}^{T}\!\!\int_{\Omega}
		(-\partial_t\psi)\,T_m(\omega_i)\,dx\,dt .
		\]
		
		\medskip
		\noindent\textbf{Gradient tail estimation.}
		We recall the following a priori estimates on the gradient term derived earlier \eqref{gradient estimate:regularization}, 
		\[
		\|\nabla f_{i,\varepsilon}\|_{L^1((0,T)\times\Omega)}
		\le
		C\sqrt{T}\,\frac{1}{\sqrt{d_i}}
		\left(
		\|f_i^{\mathrm{in}}\|_{L^1(\Omega)}
		+
		\|Q_{i,\varepsilon}\|_{L^1((0,T)\times\Omega)}
		\right).
		\]
		The sum of the tail can be estimated as 
		\begin{align}\label{gradient uniform sum}
			\sum_{i>M}
			\|\nabla f_{i,\varepsilon}\|_{L^1((0,T)\times\Omega)}
			\le C\sqrt{T}\, \sum_{i>M}\frac{1}{\sqrt{d_i}}
			\left(
			\|f_i^{\mathrm{in}}\|_{L^1(\Omega)}
			+
			\sum_{i>M}\|Q_{i,\varepsilon}\|_{L^1((0,T)\times\Omega)}
			\right)
		\end{align}
		Therefore,
		\begin{align*}
			\left\|
			\frac{Q_{i,\varepsilon}(f)}{d_i}
			\right\|_{L^1((0,T)\times\Omega)}
			&\le
			\frac{1}{2}\left\|
			\sum_{j=i+1}^{\infty} \sum_{k=1}^{j-1}
			\frac{b_{j-k,k}^i a_{j-k,k}}{k(j-k)d_{k}d_i}\,
			(j-k)f_{j-k,\varepsilon} kd_kf_{k,\varepsilon}
			\right\|_{L^1((0,T)\times\Omega)}
			\\
			&\quad+
			\left\|
			\sum_{j=1}^{\infty}
			\frac{a_{i,j}}{ijd_jd_i}\, if_{i,\varepsilon} jd_jf_{j,\varepsilon}
			\right\|_{L^1((0,T)\times\Omega)}.\\
			\lesssim & \frac{1}{2}
			\sum_{j=i+1}^{\infty} \sum_{k=1}^{j-1}
			\frac{b_{j-k,k}^i a_{j-k,k}}{k(j-k)d_{k}d_i}\,
			+ \sum_{j=1}^{\infty}
			\frac{a_{i,j}}{ijd_jd_i},
		\end{align*}
		where we obtain the last inequality thanks to Proposition~\ref{prop:dual_estimate}. Hence the tail 
		\begin{align}\label{tail control source}
			{\sum_{i>M}\left\|
				\frac{Q_{i,\varepsilon}(f)}{d_i}
				\right\|_{L^1((0,T)\times\Omega)}\le \delta_M \to 0}
		\end{align}
		as $M\to\infty$. As a consequence, for $\delta_M>0$, there exists $M\in\mathbb{N}$ such that
		\[
		\sum_{j>M}
		\|\nabla f_{j,\varepsilon}\|_{L^1((0,T)\times\Omega)}
		<\delta_M \to 0 \ \ \text{as}\ M\to+\infty,
		\ \text{uniformly in } \varepsilon.
		\]
		Since $\nabla f_{i,\varepsilon}\to \nabla f_{i}$ pointwise. Fatou's lemma yields
		\[
		\sum_{j>M}
		\|\nabla f_{j}\|_{L^1((0,T)\times\Omega)}
		<\delta_M \to 0 \ \ \text{as}\ M\to+\infty.
		\]
		\medskip
		\noindent\textbf{Convergence of the gradients of $\omega_{i,\varepsilon}$.}
		We consider
		\[
		\int_{0}^{T}\!\!\int_{\Omega}
		\left|
		\nabla f_{i,\varepsilon}
		+ \eta \sum_{j\ne i} \nabla f_{j,\varepsilon}
		-
		\nabla f_i
		-
		\eta \sum_{j\ne i} \nabla f_j
		\right| dx\,dt .
		\]
		Using the triangle inequality and splitting the infinite sum at $M$, we obtain
		\begin{align*}
			\le{}&
			\int_{0}^{T}\!\!\int_{\Omega}
			|\nabla f_{i,\varepsilon}-\nabla f_i|\,dx\,dt
			+
			\eta \sum_{\substack{j=1\\ j\ne i}}^{M}
			\int_{0}^{T}\!\!\int_{\Omega}
			|\nabla f_{j,\varepsilon}-\nabla f_j|\,dx\,dt
			\\
			&\qquad
			+
			\eta \sum_{j>M}
			\left(
			\|\nabla f_{j,\varepsilon}\|_{L^1}
			+
			\|\nabla f_j\|_{L^1}
			\right).
		\end{align*}
		For each fixed $j$, we have $\nabla f_{j,\varepsilon}\to\nabla f_j$ in
		$L^1((0,T)\times\Omega)$, while the tail is controlled uniformly by the
		previous estimate. Hence,
		\[
		\nabla\omega_{i,\varepsilon}
		\longrightarrow
		\nabla\omega_i
		\quad \text{in } L^1((0,T)\times\Omega).
		\]
		
		\medskip
		\noindent\textbf{Limit of the third term (diffusion).}
		Since $T_m'$ is bounded and continuous, it follows that
		\[
		T_m'(\omega_{i,\varepsilon})\,\nabla\omega_{i,\varepsilon}
		\longrightarrow
		T_m'(\omega_i)\,\nabla\omega_i
		\quad \text{in } L^1((0,T)\times\Omega).
		\]
		Therefore,
		\[
		\lim_{\varepsilon\to0}
		d_i \int_{0}^{T}\!\!\int_{\Omega}
		\nabla\psi\,
		T_m'(\omega_{i,\varepsilon})\,
		\nabla\omega_{i,\varepsilon}\,dx\,dt
		=
		d_i \int_{0}^{T}\!\!\int_{\Omega}
		\nabla\psi\,
		T_m'(\omega_i)\,
		\nabla\omega_i\,dx\,dt .
		\]
		
		\medskip
		\noindent\textbf{Limit of the fourth term (reaction).}
		Recall that
		\[
		\Lambda^m_{i,\varepsilon}
		=
		T_m'(\omega_{i,\varepsilon})
		\left(
		Q_{i,\varepsilon}
		+
		\eta \sum_{j\ne i} Q_{j,\varepsilon}
		\right).
		\]
		Fix $(t,x)\in \text{Supp}\, \{ T'_m{(w_{j,\varepsilon})}\}$.  
		By the construction of the truncation $T_m$ \eqref{m_trunction}, we have, for all $j\ge1$ and
		$\varepsilon>0$,
		\[
		0 \le f_{j,\varepsilon}(t,x) \le m .
		\]
		Consequently, 
		\[
		0 \le \varepsilon\sum_{j=1}^{\infty} c_j (f_{j,\varepsilon}(t,x))^2
		\le
		\varepsilon m^2 \sum_{j=1}^{\infty} c_j, \quad \forall \, (t,x)\in \text{Supp}\, \{ T'_m{(w_{j,\varepsilon})}\}. 
		\]
		By the assumption on the coefficients (see assumption~\ref{strong assumption}), we have
		\[
		\sum_{j=1}^{\infty} c_j < \infty .
		\]
		Hence,
		\[
		\varepsilon m^2 \sum_{j=1}^{\infty} c_j
		\;\xrightarrow[\varepsilon\to0]{}\; 0 .
		\]
		The above estimate is pointwise and uniform with respect to $(t,x)\in \text{Supp}\, \{ T'_m{(w_{j,\varepsilon})}\}$.
		As a consequence,
		\[
		1+\varepsilon\sum_{j=1}^{\infty} c_j (f_{j,\varepsilon})^2(t,x)
		\xrightarrow[\varepsilon\to0]{}
		1,
		\quad \forall \, (t,x)\in \text{Supp}\, \{ T'_m{(w_{j,\varepsilon})}\} .
		\]
		Similarly, with the help of uniform tail control \eqref{tail control source} on $Q_{j,\varepsilon}$, for any $j\ge 1$, we obtain
		\[
		T_{m}^{'}(w_{j,\varepsilon})Q_{j,\varepsilon} \to T_m^{'}(w_j)Q_j
		\]
		It yields
		\[
		\Lambda^m_{i,\varepsilon} \to \Lambda^m_i
		:=
		T_m'(\omega_i)
		\left(
		Q_i + \eta \sum_{j\ne i} Q_j
		\right).
		\]
		Moreover, the truncation yields the domination
		\[
		|\psi\,\Lambda^m_{i,\varepsilon}|
		\le
		\left(\frac{m}{\eta}\right)^2
		\left(
		\sum_{i=1}^{\infty}\sum_{j=i+1}^{\infty}\sum_{k=1}^{j-1}
		b^i_{j-k,k}\,a_{j-k,k}
		+
		\sum_{i=1}^{\infty}\sum_{j=1}^{\infty} a_{ij}
		\right),
		\]
		which is integrable. Hence, by the Dominated Convergence Theorem,
		\[
		\Lambda^m_{i,\varepsilon} \to \Lambda^m_i
		\quad \text{in } L^1((0,T)\times\Omega).
		\]
		
		\medskip
		\noindent\textbf{Limit weak formulation.}
		Letting $\varepsilon\to0$ in the weak formulation, we conclude that for every
		nonnegative test function $\psi\in C_c^\infty([0,T)\times\Omega)$,
		\begin{align*}
			\int_{\Omega} \psi(0,x)\,T_m(\omega_i^{\mathrm{in}})\,dx
			+&
			\int_{0}^{T}\!\!\int_{\Omega}
			(-\partial_t\psi)\,T_m(\omega_i)\,dx\,dt
			+
			d_i \int_{0}^{T}\!\!\int_{\Omega}
			\nabla\psi\,
			T_m'(\omega_i)\,
			\nabla\omega_i\,dx\,dt
			\\
			&\qquad \ge
			\int_{0}^{T}\!\!\int_{\Omega}
			\psi\,\Lambda^m_i\,dx\,dt
			-
			C\,m\,\eta^{1/2},
		\end{align*}
		where
		\[
		\omega_i
		=
		f_i + \eta \sum_{j\ne i} f_j,
		\ \text{and} \
		\Lambda^m_i
		=
		T_m'(\omega_i)
		\left(
		Q_i + \eta \sum_{j\ne i} Q_j
		\right).
		\]
	\end{proof}
	Next, we take the limit $\eta \to 0$ to recover the inequality satisfied by each $f_i$ 
	for $i \in \mathbb{N}$. It is easy to see that the last term vanishes in the limit, 
	yielding a weak supersolution of the nonlinear fragmentation equation \eqref{NFE}.

	\section{Weak supersolution to the nonlinear fragmentation equation \eqref{NFE}} \label{EWSNFE}
	
	In this section, we focus on constructing a weak supersolution to the nonlinear
	fragmentation equation \eqref{NFE}. We will consider the limits $\eta \to 0$ and
	$m \to \infty$. We begin with the following proposition.
	\begin{prop}\label{eta to zero}
		Let all the assumption as in Proposition \ref{epsilon compactness in the inequation} is satisfied. The limit $\eta\to 0$ in the estimate \eqref{control estimate on epsilon} yields: for every
		nonnegative test function $\psi \in C_c^\infty([0,T)\times\Omega)$,
		\begin{align}\label{control on eta}
			\int_\Omega \psi(0,x)\,T_m(f_i^{\mathrm{in}})\,dx
			+&
			\int_0^T\!\!\int_\Omega
			(-\partial_t\psi)\,T_m(f_i)\,dx\,dt
			+\, d_i \int_0^T\!\!\int_\Omega
			\nabla\psi \cdot  T_m'(f_i)\,\nabla f_i
			\,dx\,dt \nonumber
			\\
			&\ge
			\int_0^T\!\!\int_\Omega
			\psi\,T_m'(f_i)\,Q_i
			\,dx\,dt.
		\end{align}
		Here $Q_i$ is as defined in \eqref{NFE Operator}.
	\end{prop}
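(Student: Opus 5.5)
We pass to the limit $\eta\to0$ term by term in \eqref{control estimate on epsilon}, with $m$ and $i$ fixed. The limit functions $f_j$ do not depend on $\eta$, since they are the $L^1((0,T);W^{1,1}(\Omega))$ limits from Lemma~\ref{lem:compactness of sequence}. Only $\omega_i=f_i+\eta\sum_{j\ne i}f_j$ and the error term move with $\eta$. The error term $C\,m^2(1+T)^2\eta^{1/2}$ tends to $0$ because $m$ and $T$ are fixed.

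\textbf{Convergence of $\omega_i$ and $\nabla\omega_i$.} From the proof of Proposition~\ref{epsilon compactness in the inequation} we have the tail bounds, uniform in $\varepsilon$ and hence valid for the limits by Fatou:
\[
\sum_j\|f_j\|_{L^1((0,T)\times\Omega)}<\infty,\qquad \sum_j\|\nabla f_j\|_{L^1((0,T)\times\Omega)}<\infty.
\]
Consequently $\omega_i\to f_i$ and $\nabla\omega_i\to\nabla f_i$ in $L^1((0,T)\times\Omega)$, with error $O(\eta)$. Along a subsequence $\eta_n\to0$ the convergence also holds a.e. For the initial term, $\omega_i^{\rm in}\to f_i^{\rm in}$ in $L^1(\Omega)$ because $\sum_j\|f_j^{\rm in}\|_{L^1}<\infty$ by \eqref{IC mass finite}. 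Since $T_m$ is bounded and Lipschitz, dominated convergence handles the first two terms.

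\textbf{Diffusion term.} We write
\[
T_m'(\omega_i)\nabla\omega_i-T_m'(f_i)\nabla f_i=T_m'(\omega_i)(\nabla\omega_i-\nabla f_i)+\bigl(T_m'(\omega_i)-T_m'(f_i)\bigr)\nabla f_i .
\]
The first piece tends to $0$ in $L^1$ because $|T_m'|\le1$. The second tends to $0$ by dominated convergence, using the a.e. convergence of $\omega_i$, the continuity of $T_m'$, and the domination by $2|\nabla f_i|\in L^1$.

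\textbf{Reaction term (the main obstacle).} We split
\[
\Lambda_i^m=T_m'(\omega_i)Q_i+\eta\,T_m'(\omega_i)\sum_{j\ne i}Q_j .
\]
For the first piece, $Q_i\in L^1((0,T)\times\Omega)$: Lemma~\ref{bound on NR operator} bounds $\|Q_{i,\varepsilon}\|_{L^1}$ uniformly, and Fatou applied to the gain and loss parts separately transfers the bound to the limit. Together with the a.e. convergence $T_m'(\omega_i)\to T_m'(f_i)$ and $|T_m'|\le1$, dominated convergence gives
\[
\int_0^T\!\!\int_\Omega\psi\,T_m'(\omega_i)Q_i\to\int_0^T\!\!\int_\Omega\psi\,T_m'(f_i)Q_i .
\]
For the second piece we need $\sum_j\|Q_j\|_{L^1}<\infty$. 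This follows from Lemma~\ref{bound on NR operator} and Fatou, using $\sup_i d_i<\infty$:
\[
\sum_j\|Q_j\|_{L^1}\le\sup_i d_i\sum_j\|Q_j/d_j\|_{L^1}\le C .
\]
Hence this piece is bounded by $\eta\,C\|\psi\|_\infty\to0$.

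The delicate point is justifying $Q_j\in L^1$ with a summable bound at the limit level. Here $Q_j$ is an infinite bilinear sum in the limit functions, and it must be identified with the a.e. limit of the $Q_{j,\varepsilon}$ restricted to the truncation set, which is what the previous proposition implicitly used. I would first try to control the positive and negative parts of $Q_j$ separately. Each is a nonnegative series of products $f_kf_l$ whose $\varepsilon$-analogues are bounded in $L^1$ uniformly via Proposition~\ref{prop:dual_estimate}, so Fatou applies termwise and then to the series. Collecting all limits yields \eqref{control on eta}.
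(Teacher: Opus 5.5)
Your proposal is correct and follows the paper's proof essentially step by step. It uses the $O(\eta)$ bounds on $\|\omega_i-f_i\|_{L^1}$ and $\|\nabla\omega_i-\nabla f_i\|_{L^1}$ from the summability of $\|f_j\|_{L^1}$ and $\|\nabla f_j\|_{L^1}$, dominated convergence with $0\le T_m'\le 1$ for the diffusion and reaction terms, the bound $\eta\sum_j\|Q_j\|_{L^1}\to0$ for the extra reaction piece, and $C\,m^2(1+T)^2\eta^{1/2}\to0$. Your justification of $\sum_j\|Q_j\|_{L^1}<\infty$ applies Fatou to the nonnegative gain and loss series separately, using the $\varepsilon$-uniform bounds from Proposition~\ref{prop:dual_estimate} without the regularizing denominator; this is slightly more careful than the paper's direct Fatou step on $\|Q_{i,\varepsilon}\|_{L^1}$, which tacitly needs $|Q_i|\le\liminf_{\varepsilon\to0}|Q_{i,\varepsilon}|$ a.e.
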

	\begin{proof}
		We now let $\eta \to 0$ in the estimate \eqref{control estimate on epsilon}. We divide the proof in several steps.
		\newline 
		\textbf{Uniform estimates.}
		Thanks to the Fatou's lemma and the estimate \eqref{gradient estimate:regularization}, we have that
		\begin{equation}\label{eq:eta_mass}
			\left \{
			\begin{aligned}
				\sum_{i=1}^{\infty} \| f_i \|_{L^1((0,T)\times\Omega)}
				&\lesssim
				C,\\
				\sum_{i=1}^{\infty} \| \nabla f_i \|_{L^1((0,T)\times\Omega)}
				&\lesssim
				C.
			\end{aligned}
			\right .
		\end{equation}
		Moreover, Lemma \eqref{bound on NR operator} yields
		\begin{equation}\label{eq:eta_Qeps}
			\sum_{i=1}^{\infty} \| Q_{i,\varepsilon} \|_{L^1((0,T)\times\Omega)}
			\lesssim C.
		\end{equation}
		By Fatou's lemma, for every fixed $M\in\mathbb{N}$,
		\begin{equation}\label{eq:eta_Q}
			\sum_{i=1}^{M} \| Q_i \|_{L^1((0,T)\times\Omega)}
			\le
			\liminf_{\varepsilon\to0}
			\sum_{i=1}^{M} \| Q_{i,\varepsilon} \|_{L^1((0,T)\times\Omega)}
			\lesssim
			C.
		\end{equation}
		
		\medskip
		
		\noindent\textbf{Convergence of $\omega_i$.}
		Recall that
		\[
		\omega_i
		=
		f_i + \eta \sum_{j\neq i} f_j.
		\]
		Using \eqref{eq:eta_mass}, we estimate
		\begin{align*}
			\int_0^T\!\!\int_\Omega
			\left|
			\omega_i - f_i
			\right|
			\,dx\,dt
			&=
			\eta \int_0^T\!\!\int_\Omega
			\left|
			\sum_{j\neq i} f_j
			\right|
			dx\,dt
			\\
			&\le
			\eta \sum_{j\neq i}
			\| f_j \|_{L^1((0,T)\times\Omega)}
			\;\xrightarrow[\eta\to0]{}\;0.
		\end{align*}
		Hence,
		\begin{equation}\label{eq:eta_omega}
			\omega_i \longrightarrow f_i
			\quad\text{in }L^1((0,T)\times\Omega)
			\quad\text{and a.e. in }(0,T)\times\Omega.
		\end{equation}
		
		\medskip
		
		\noindent\textbf{Reaction term.}
		Using \eqref{eq:eta_Q}, we have
		\begin{align*}
			\int_0^T\!\!\int_\Omega
			\left|
			Q_i + \eta \sum_{j\neq i} Q_j - Q_i
			\right|
			\,dx\,dt
			&\le
			\eta \sum_{j\neq i}
			\| Q_j \|_{L^1((0,T)\times\Omega)}
			\\
			&\xrightarrow[\eta\to0]{} 0.
		\end{align*}
		Since $0 \le T_m' \le 1$, by dominated convergence and \eqref{eq:eta_omega},
		\begin{equation}\label{eq:eta_reaction}
			T_m'(\omega_i)
			\left(
			Q_i + \eta \sum_{j\neq i} Q_j
			\right)
			\longrightarrow
			T_m'(f_i)\,Q_i
			\quad\text{in }L^1((0,T)\times\Omega).
		\end{equation}
		
		\medskip
		
		\noindent\textbf{Diffusion term.}
		We write
		\[
		\nabla \omega_i
		=
		\nabla f_i + \eta \sum_{j\neq i} \nabla f_j.
		\]
		Thanks to the estimate \eqref{eq:eta_mass}, we obtain
		\begin{align*}
			\int_0^T\!\!\int_\Omega
			\left|
			\nabla \omega_i - \nabla f_i
			\right|
			\,dx\,dt
			&\le
			\eta \sum_{j\neq i}
			\| \nabla f_j \|_{L^1((0,T)\times\Omega)}
			\\
			&\xrightarrow[\eta\to0]{} 0.
		\end{align*}
		Together with \eqref{eq:eta_omega} and the boundedness of $T_m'$, this yields
		\begin{equation}\label{eq:eta_grad_term}
			T_m'(\omega_i)\,\nabla \omega_i
			\longrightarrow
			T_m'(f_i)\,\nabla f_i
			\quad\text{in }L^1((0,T)\times\Omega).
		\end{equation}
		
		\medskip
		
		\noindent\textbf{Vanishing last term.}
		The last term of the estimate \eqref{control estimate on epsilon} satisfies
		\[
		C\,m^2(1+T)^2\,\eta^{1/2}
		\;\xrightarrow[\eta\to0]{}\;0.
		\]
		
		\medskip
		
		\noindent\textbf{Limit weak formulation.}
		Letting $\eta \to 0$ in the estimate \eqref{control estimate on epsilon} and using
		\eqref{eq:eta_omega}–\eqref{eq:eta_grad_term}, we conclude that for every
		nonnegative test function $\psi \in C_c^\infty([0,T)\times\Omega)$,
		\begin{align*}
			\int_\Omega \psi(0,x)\,T_m(f_i^{\mathrm{in}})\,dx
			+&
			\int_0^T\!\!\int_\Omega
			(-\partial_t\psi)\,T_m(f_i)\,dx\,dt
			+\, d_i \int_0^T\!\!\int_\Omega
			\nabla\psi \cdot  T_m'(f_i)\,\nabla f_i
			\,dx\,dt
			\\
			&\ge
			\int_0^T\!\!\int_\Omega
			\psi\,T_m'(f_i)\,Q_i
			\,dx\,dt.
		\end{align*}
	\end{proof}
	In the next theorem we establish existence of global-in-time nonnegative weak supersolution to the nonlinear fragmentation system \eqref{NFE}.

	\begin{thm}\label{existence result_supersolution}
		Let the coagulation kernel $a_{i,j}$, the breakage kernel $b_{i,j}^k$, and the diffusion coefficients $d_i$ satisfy either Assumption~\ref{weaker assumption} or Assumption~\ref{strong assumption} for all $i,j,k \in \mathbb{N}$. Suppose that the initial data satisfy
		\[
		\sum_{i=1}^{\infty} \frac{1}{\sqrt{d_i}} \, \|f_i^{\rm{in}}\|_{L^1(\Omega)}^{\frac12} < +\infty \ \mbox{and} \ \left\|\sum_{i=1}^{\infty}if_i^{\rm{in}}\right\|_{L^2(\Omega)}<+\infty.
		\]
		Then the system~\eqref{NFE} admits a global-in-time nonnegative weak supersolution $\{f_i\}_{i\in\mathbb{N}}$ such that, for every $i \in \mathbb{N}$,
		\[
		f_i \in L^1\!\left((0,T); W^{1,1}(\Omega)\right)
		\cap L^2\!\left((0,T)\times\Omega\right).
		\]
	\end{thm}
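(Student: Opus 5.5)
The plan is to start from Proposition~\ref{eta to zero} and let $m\to\infty$ in \eqref{control on eta}. The limit functions $f_i$ are the $L^1((0,T);W^{1,1}(\Omega))$ limits from Lemma~\ref{lem:compactness of sequence}, extracted along a diagonal subsequence in $\varepsilon$ valid for all $i\in\mathbb{N}$ simultaneously. Nonnegativity of $f_i$ is inherited from the nonnegativity of $f_{i,\varepsilon}$, since the convergence holds a.e.

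The $L^2$ membership comes from Proposition~\ref{prop:dual_estimate} together with Fatou's lemma. Since $f_{j,\varepsilon}\ge0$, we have $i^2 d_i f_{i,\varepsilon}^2\le \big(\sum_j jd_jf_{j,\varepsilon}\big)\big(\sum_j jf_{j,\varepsilon}\big)$. Hence $\|f_{i,\varepsilon}\|_{L^2((0,T)\times\Omega)}^2\lesssim (i^2d_i)^{-1}\sup_j d_j\,\|\sum_j jf_j^{\rm in}\|_{L^2}^2$, uniformly in $\varepsilon$, and a.e.\ convergence yields $f_i\in L^2((0,T)\times\Omega)$. The same Fatou argument applied to the double sum gives
\[
\int_0^T\!\!\int_\Omega \Big(\sum_j jd_jf_j\Big)\Big(\sum_j jf_j\Big)<\infty .
\]
Combined with the summability (A4) (or its concrete form under Assumption~\ref{weaker assumption}), as in Lemma~\ref{bound on NR operator}, this gives $Q_i(f)\in L^1((0,T)\times\Omega)$: both the gain and loss series converge absolutely in $L^1$.

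Next I pass to the limit $m\to\infty$ term by term in \eqref{control on eta}.
\begin{itemize}
\item \emph{Initial and time terms.} Since $T_m(\sigma)\to\sigma$ pointwise with $0\le T_m(\sigma)\le\sigma$, dominated convergence, with dominants $f_i^{\rm in}\in L^1$ and $f_i\in L^1$, gives the initial and $-\partial_t\psi$ terms.
\item \emph{Diffusion term.} $T_m'(f_i)\nabla f_i\to\nabla f_i$ a.e., because $T_m'(f_i)=1$ on $\{f_i\le m-1\}$, whose union over $m$ has full measure as $f_i$ is finite a.e. The domination is by $|\nabla f_i|\in L^1$ since $0\le T_m'\le1$.
\item \emph{Reaction term.} $T_m'(f_i)Q_i\to Q_i$ a.e., dominated by $|Q_i|\in L^1$.
\end{itemize}
The resulting inequality is exactly the supersolution inequality of Definition~\ref{definition weak solution} with "$\ge$" and $\psi\ge0$, carrying the coefficient $d_i$ on the gradient term as in \eqref{NFE}.

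The main obstacle is verifying the integrability of $Q_i$ and the legitimacy of the earlier limits under Assumption~\ref{weaker assumption}, which is not literally of the form of Assumption~\ref{strong assumption}. So I would first check that $a_{i,j}=(ij)^{-\lambda}$ with $\lambda\ge4$, $b^k_{i,j}=2/(i+j-1)\le1$, and $d_i=i^{-\alpha}$ with $\alpha\le1$ imply (A1)--(A4). For (A4), one has $\sqrt{b a/(kjd_id_j)}\lesssim (jk)^{-\lambda/2-1/2}(j+k)^{-1/2}\, i^{\alpha/2}j^{\alpha/2}$. Summing over $i\le j+k-1$ gives a factor $(j+k)^{1+\alpha/2}$, which is summable in $j,k$ for $\lambda\ge4$. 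The second sum in (A4) is similar. The first initial-data hypothesis, needed in \eqref{use_small_initial_mas}, is used only inside Proposition~\ref{control by time parameter}, which we take as given.

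A secondary subtlety is that the limit of $Q_{i,\varepsilon}$ appearing in $\Lambda_i^m$ must indeed equal $Q_i(f)$. This was asserted in Proposition~\ref{epsilon compactness in the inequation}, so I rely on it.
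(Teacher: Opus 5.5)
Your $m\to\infty$ step is exactly the paper's proof: dominated convergence in the four terms of \eqref{control on eta}. Your additions are sound and supply things the paper leaves implicit. The pointwise bound $i^2d_if_{i,\varepsilon}^2\le(\sum_j jd_jf_{j,\varepsilon})(\sum_j jf_{j,\varepsilon})$, combined with Proposition~\ref{prop:dual_estimate} and Fatou, gives $f_i\in L^2((0,T)\times\Omega)$. Fatou applied to the double sum gives $Q_i\in L^1$.

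\textbf{The gap.} The step that fails is the one you call the main obstacle: the claim that Assumption~\ref{weaker assumption} implies (A1)--(A4).
\begin{itemize}
\item After summing over $i\le j+k-1$, the $(j,k)$ term of the first series in (A4) is comparable to $(j+k)^{(1+\alpha)/2}(jk)^{-(\lambda+1)/2}j^{\alpha/2}$.
\item For $k=1$ this is $\asymp j^{\alpha-\lambda/2}$. Hence (A4) holds if and only if $\lambda>2+2\alpha$.
\item It therefore fails at the admissible corner $\lambda=4$, $\alpha=1$, where the $j$-sum is harmonic.
\end{itemize}
Two smaller slips are harmless: (A3) fails for $\alpha=0$, and $b^1_{1,1}=2$ rather than $\le 1$. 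Only $\sup_id_i<\infty$ and boundedness of $b$ are ever used.

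The consequence is that at this corner you cannot obtain Lemma~\ref{Root bound on NR operator} through (A4), and hence not Proposition~\ref{control by time parameter} either. The paper's own proof here is only the $m\to\infty$ step, so it does not check this point at all.

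\textbf{The fix.} Avoid summing the coefficients (the $\sqrt{\sum}\le\sum\sqrt{\cdot}$ step) and use a supremum instead. For $c_{p,q}\ge 0$,
$\int\sum_{p,q}c_{p,q}f_{p,\varepsilon}f_{q,\varepsilon}\le\big(\sup_{p,q}\tfrac{c_{p,q}}{pq\,d_p}\big)\int(\sum_p pd_pf_{p,\varepsilon})(\sum_q qf_{q,\varepsilon})$.
Under Assumption~\ref{weaker assumption}, and uniformly in $\varepsilon$, this gives:
\begin{itemize}
\item a gain part $\lesssim i^{-(\lambda+2-\alpha)}$, using $p+q\ge i+1$ so that $\max(p,q)\ge (i+1)/2$ and $p+q-1\ge i$;
\item a loss part $\lesssim i^{-(\lambda+1)}$, by pairing $if_{i,\varepsilon}$ with $\sum_j jd_jf_{j,\varepsilon}$.
\end{itemize}
Hence $\|Q_{i,\varepsilon}\|_{L^1((0,T)\times\Omega)}/d_i\lesssim i^{-(\lambda+1-\alpha)}$. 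Both this quantity and its square root are summable for every $\lambda\ge4$ and $\alpha\in[0,1]$. That recovers Lemmas~\ref{bound on NR operator} and~\ref{Root bound on NR operator}, including their uniform tail versions, which is how (A4) is used in the earlier propositions.
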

	\begin{proof}
		Passing to the limit with respect to the level-set parameter $``m"$ yields the desired
		result. We divide the proof into a few steps.
		\newline
		\textbf{Reaction term.}
		We consider
		\[
		\int_0^T\!\!\int_\Omega
		\psi\,T_m'(f_i)\,Q_i \,dx\,dt.
		\]
		By construction of the truncation, $0 \le T_m'(r) \le 1$ for all $r\ge0$, and
		\[
		T_m'(f_i) \longrightarrow 1
		\quad \text{a.e. in } (0,T)\times\Omega
		\quad \text{as } m \to \infty.
		\]
		Moreover,
		\[
		\big|\psi\,T_m'(f_i)\,Q_i\big|
		\le
		\|\psi\|_{L^\infty}\,|Q_i|,
		\]
		with $Q_i \in L^1((0,T)\times\Omega)$.  
		By the dominated convergence theorem,
		\[
		\int_0^T\!\!\int_\Omega
		\psi\,T_m'(f_i)\,Q_i \,dx\,dt
		\;\xrightarrow[m\to\infty]{}\;
		\int_0^T\!\!\int_\Omega
		\psi\,Q_i \,dx\,dt.
		\]
		
		\medskip
		
		\noindent\textbf{Diffusion term.}
		We next consider
		\[
		d_i \int_0^T\!\!\int_\Omega
		\nabla\psi \cdot T_m'(f_i)\,\nabla f_i \,dx\,dt.
		\]
		Since $T_m'(f_i)\to1$ a.e. and
		\[
		\big|\nabla\psi \cdot T_m'(f_i)\,\nabla f_i\big|
		\le
		\|\nabla\psi\|_{L^\infty} |\nabla f_i|,
		\]
		with $\nabla f_i \in L^1((0,T)\times\Omega)$. Hence, dominated convergence theorem yields
		\[
		d_i \int_0^T\!\!\int_\Omega
		\nabla\psi \cdot T_m'(f_i)\,\nabla f_i \,dx\,dt
		\;\xrightarrow[m\to\infty]{}\;
		d_i \int_0^T\!\!\int_\Omega
		\nabla\psi \cdot \nabla f_i \,dx\,dt.
		\]
		
		\medskip
		
		\noindent\textbf{Time derivative term.}
		Since $T_m(r)=r$ for $0\le r\le m$ and $T_m(r)\to r$ for all $r\ge0$, we have
		\[
		T_m(f_i) \longrightarrow f_i
		\quad \text{a.e. in } (0,T)\times\Omega.
		\]
		Moreover,
		\[
		|T_m(f_i)| \le f_i,
		\]
		with $f_i \in L^1((0,T)\times\Omega)$. Hence,
		\[
		\int_0^T\!\!\int_\Omega
		(-\partial_t\psi)\,T_m(f_i)\,dx\,dt
		\;\xrightarrow[m\to\infty]{}\;
		\int_0^T\!\!\int_\Omega
		(-\partial_t\psi)\,f_i\,dx\,dt.
		\]
		
		\medskip
		
		\noindent\textbf{Initial term.}
		Similarly,
		\[
		\int_\Omega \psi(0,x)\,T_m(f_i^{\mathrm{in}})\,dx
		\;\xrightarrow[m\to\infty]{}\;
		\int_\Omega \psi(0,x)\,f_i^{\mathrm{in}}\,dx.
		\]
		
		\medskip
		
		\noindent\textbf{Limit weak formulation.}
		Letting $m\to\infty$, we conclude that for every nonnegative
		$\psi \in C_c^\infty([0,T)\times\Omega)$,
		\begin{align*}
			\int_\Omega \psi(0,x)\,f_i^{\mathrm{in}}(x)\,dx
			+&
			\int_0^T\!\!\int_\Omega
			(-\partial_t\psi)\,f_i \,dx\,dt
			+
			d_i \int_0^T\!\!\int_\Omega
			\nabla\psi \cdot \nabla f_i \,dx\,dt
			\\
			&\ge
			\int_0^T\!\!\int_\Omega
			\psi\,Q_i \,dx\,dt,
		\end{align*}
		where
		\[
		Q_i
		=
		\sum_{j=i+1}^{\infty}\sum_{k=1}^{j-1}
		b^{\,i}_{\,j-k,k}\,a_{\,j-k,k}\,f_{j-k}f_k
		-
		\sum_{j=1}^{\infty} a_{i j}\,f_i f_j.
		\]
	\end{proof}
	In the next section, we devote ourselves to establish the existence of a
	nonnegative global-in-time weak distributional solution to the nonlinear
	fragmentation system~\eqref{NFE}.

	\section{Proof of Theorem \ref{existence result}}\label{final_section}
	
	Assume that there exists a nonnegative test function
	\(
	\psi \in C_c^\infty([0,T)\times \Omega)
	\)
	such that, for some $i\in\mathbb{N}$,
	\begin{align}
		\int_\Omega \psi(0,x)\, f_i^{\mathrm{in}}(x)\,dx
		&+
		\int_0^T \int_\Omega
		\left(-\partial_t \psi\right) f_i
		\,dx\,dt
		+
		\int_0^T \int_\Omega
		\nabla \psi \cdot \nabla f_i
		\,dx\,dt
		\nonumber\\
		&>\;
		\int_0^T \int_\Omega
		\psi\, Q^i
		\,dx\,dt .
		\label{ineq:strict}
	\end{align}
	
	Multiplying \eqref{ineq:strict} by $i$ and summing over $i\ge 1$, we obtain
	\begin{align}\label{eq: strict positivity}
		\int_\Omega
		\psi(0,x)\,
		\sum_{i=1}^\infty i f_i^{\mathrm{in}}(x)\,dx
		&+
		\int_0^T \int_\Omega
		\left(-\partial_t \psi\right)
		\sum_{i=1}^\infty i f_i
		\,dx\,dt \nonumber\\
		&+
		\int_0^T \int_\Omega
		\nabla \psi \cdot
		\sum_{i=1}^\infty i \nabla f_i
		\,dx\,dt
		\;>\; 0 .
	\end{align}
	By Proposition~\ref{prop:infinite_mass}, there exists a strong solution
	$\{f_{i,\varepsilon}\}_{i\ge 1}$ to
	\eqref{eq:RNFE}–\eqref{eq:RNFEoperator}.
	In particular, $f_{i,\varepsilon}$ is a weak solution and satisfies
	\begin{align*}
		\int_\Omega
		\psi(0,x)\, f_{i,\varepsilon}^{\mathrm{in}}(x)\,dx
		+
		\int_0^T \int_\Omega
		\left(-\partial_t \psi\right)
		f_{i,\varepsilon}
		\,dx\,dt
		+
		\int_0^T \int_\Omega
		\nabla \psi \cdot
		\nabla f_{i,\varepsilon}
		\,dx\,dt
		=
		\int_0^T \int_\Omega
		\psi\, Q_\varepsilon^i
		\,dx\,dt .
	\end{align*}
	Multiplying the above identity by $i$ and summing over $i\ge 1$, we obtain
	\begin{align*}
		\int_\Omega
		\psi(0,x)\,
		\sum_{i=1}^{\infty} i f_{i,\varepsilon}^{\mathrm{in}}(x)\,dx
		+
		\int_0^T \int_\Omega
		\left(-\partial_t \psi\right)
		\sum_{i=1}^{\infty} i f_{i,\varepsilon}
		\,dx\,dt +
		\int_0^T \int_\Omega
		\nabla \psi \cdot
		\sum_{i=1}^{\infty} i \nabla f_{i,\varepsilon}
		\,dx\,dt
		= 0 .
	\end{align*}
	Here we use the fact that $f_i$ is the supersolution of the system \eqref{NFE} (see Theorem \ref{existence result_supersolution}). Since the right-hand side contains no nonlinear terms, we can pass to the limit
	$\varepsilon \to 0$ using the compactness results established in Section \ref{UECRS}.
	This yields
	\begin{align*}
		\int_\Omega
		\psi(0,x)\,
		\sum_{i=1}^{\infty} i f_i^{\mathrm{in}}(x)\,dx
		&+
		\int_0^T \int_\Omega
		\left(-\partial_t \psi\right)
		\sum_{i=1}^{\infty} i f_i
		\,dx\,dt \\
		&+
		\int_0^T \int_\Omega
		\nabla \psi \cdot
		\sum_{i=1}^{\infty} i \nabla f_i
		\,dx\,dt
		= 0 ,
	\end{align*}
	which contradicts \eqref{eq: strict positivity}. Therefore, for all nonnegative
	$\psi \in C_c^\infty([0,T)\times \Omega)$ and all $i\ge 1$,
	\begin{align*}
		\int_\Omega
		\psi(0,x)\, f_i^{\mathrm{in}}(x)\,dx
		+
		\int_0^T \int_\Omega
		\left(-\partial_t \psi\right) f_i
		\,dx\,dt
		+
		\int_0^T \int_\Omega
		\nabla \psi \cdot \nabla f_i
		\,dx\,dt
		\;\le\;
		\int_0^T \int_\Omega
		\psi\, Q^i
		\,dx\,dt .
	\end{align*}
	This shows that $\{f_i\}_{i\ge 1}$ is a subsolution to the nonlinear fragmentation equation \eqref{NFE}.
	On the other hand, Theorem \ref{existence result_supersolution} guarantees the same $\{f_i\}_{i\ge 1}$ is also a supersolution to \eqref{NFE}.
	Hence, the existence of both a subsolution and a supersolution ensures the existence of a weak solution
	in the sense of Definition~\ref{definition weak solution}.
	
	\medskip
	
	\textbf{Acknowledgment:} The first author received a Postdoctoral Research Fellowship from the Department of Atomic Energy (DAE), Government of India. The second author gratefully acknowledges support from the Anusandhan National Research Foundation (ANRF), India, through the National Post-Doctoral Fellowship (NPDF) [File No. PDF/2025/007779] and the Postdoctoral Research Fellowship he received during the postdoctoral tenure at Harish-Chandra Research Institute (HRI) from the Department of Atomic Energy (DAE), Government of India.
	\noindent
	
	\medskip
	\bibliography{Refs.bib}
	\bibliographystyle{abbrv}
\end{document}